\providecommand{\tabularnewline}{\\}
\numberwithin{equation}{section}
\numberwithin{figure}{section}
\theoremstyle{plain}
\newtheorem{thm}{Theorem}[section]
  \theoremstyle{definition}
  \newtheorem{defn}[thm]{Definition}
  \theoremstyle{remark}
  \newtheorem{rem}[thm]{Remark}
  \theoremstyle{plain}
  \newtheorem*{thm*}{Theorem}
  \theoremstyle{plain}
  \newtheorem{lem}[thm]{Lemma}
  \theoremstyle{plain}
  \newtheorem{prop}[thm]{Proposition}
  \theoremstyle{plain}
  \newtheorem{cor}[thm]{Corollary}
  \theoremstyle{definition}
  \newtheorem{problem}[thm]{Problem}
\newcommand{\N}{\mathbb{N}}
\newcommand{\Z}{\mathbb{Z}}
\newcommand{\al}{\alpha}
\newcommand{\Pcal}{\mathcal{P}}
\newcommand{\Ucal}{\mathcal{U}}
\newcommand{\D}{\mathcal{D}}
\newcommand{\mcal}{\mathcal{M}}
\newcommand{\pcal}{\mathcal{P}}
\newcommand{\ncal}{\mathcal{N}}
\newcommand{\Acal}{\mathcal{A}}
\newcommand{\acal}{\mathcal{A}}
\newcommand{\Qcal}{\mathcal{Q}}
\newcommand{\qcal}{\mathcal{Q}}
\newcommand{\dcal}{\mathcal{D}}
\newcommand{\ha}{\hat{\alpha}}
\begin{document}

\title{Minimal hyperspace actions of homeomorphism groups of h-homogeneous
spaces}

\author{Eli Glasner \& Yonatan Gutman}

%%ee

\keywords{Minimal hyperspace actions, universal minimal space, h-homogeneous, homogeneous Boolean algebra,
maximal chains, generalized Cantor sets, corona, remainder, stable collections, dual Ramsey theorem.}

\subjclass[2010]{37B05, 54H10, 54H20, 03G05, 06E15. }

\thanks{
%This work was partially done while both authors were visiting the Fields Institute
%in Toronto during the summer of 2010. We thank the Fields Institute for its
%support.
The first named author's research was supported by Grant No 2006119
from the United States-Israel Binational Science Foundation (BSF)}

\date{\today}

\begin{abstract}
Let $X$ be a h-homogeneous zero-dimensional compact Hausdorff space,
i.e. $X$ is a Stone dual of a homogeneous Boolean algebra. Using
the dual Ramsey theorem and a detailed combinatorial analysis of what
we call \emph{stable collections} of subsets of a finite set, we obtain
a complete list of the minimal sub-systems of the compact dynamical
system $(Exp(Exp(X)),Homeo(X))$, where $Exp(X)$ stands for the hyperspace
comprising the closed subsets of
%%e the
$X$ equipped with the Vietoris
topology. The importance of this dynamical system stems from Uspenskij's
characterization of the universal ambit of
$G=Homeo(X)$. The results apply
to
%%e both
$X=C$ the Cantor set,
%%e and
the generalized Cantor sets $X=\{0,1\}^{\kappa}$
for non-countable cardinals $\kappa$, and to several other spaces. A
particular interesting case is $X=\omega^{\ast}=\beta\omega\setminus\omega$,
where $\beta\omega$ denotes the Stone-$\check{C}$ech compactification
of the natural numbers. This space, called \emph{the corona} or the
\emph{remainder} of $\omega$, has been extensively studied in the
fields of set theory and topology.
\end{abstract}

\maketitle

\section{Introduction}

\subsection{\label{sub:Representative-families}Representative families}

Let $G$ be a (Hausdorff) topological group and $X$ a Hausdorff
compact space.
We consider compact dynamical systems or $G$-spaces which we denote by $(X,G)$.
The general theory of such systems ensures the existence and uniqueness
of a universal $G$-ambit denoted $(\mathcal{S}(G),e_0,G)$.
Here an {\textbf{ambit}} is a $G$-space $(X,x_0,G)$, with a distinguished point
$x_0$ whose orbit is dense in $X$. The universality means that for every ambit
$(X,x_0,G)$ there is a (necessarily unique) homomorphism of pointed dynamical systems $\pi: (\mathcal{S}(G),e_0,G) \to (X,x_0,G)$.
By Zorn's lemma every dynamical system $(X,G)$ admits, at least one, minimal
subsystem $Y \subset X$; i.e. $Y$ is closed and invariant and the only invariant
subsets of $Y$ are $Y$ and $\emptyset$.
It then follows that any minimal subset $M$ of $\mathcal{S}(G)$ is a
{\textbf {universal minimal $G$-system}}, in an obvious sense, and moreover
up to isomorphism this universal system $\mathcal{M}(G)$ is unique.

The {\textbf{enveloping semigroup}} of a dynamical system $(X,G)$,
denoted $E(X)$, is by definition the closure in the compact space $X^X$
of the collection of maps $\{\check{g} : g \in G\}$, where $\check{g}$ is
the element of the group $Homeo(G)$ which corresponds to $g$.
We refer the reader to books on the abstract theory of topological
dynamics for more details. In particular \cite{dV93} is a suitable
source from our point of view.

In \cite{U09}
Uspenskij introduced the following definition
% (for the various concepts
% appearing in the definition see \cite{dV93}):

\begin{defn}
A family $\{X_{\alpha} : \,\alpha\in A\}$ of compact G-spaces is \textbf{representative}
if the family of natural maps $\mathcal{S}(G)\rightarrow E(X_{\alpha})$,
where $\mathcal{S}(G)$ is the universal ambit of $G$ and $E(X_{\alpha})$
is the enveloping semigroup of $X_{\alpha}$, separate points of $\mathcal{S}(G)$
(and hence yields an embedding of $\mathcal{S}(G)$ into $\Pi_{\alpha\in A}E(X_{\alpha})$).
\end{defn}

In the same article Uspenskij proved:
\begin{thm}
If $\{X_{\alpha} : \,\alpha\in A\}$ is a representative family of compact
$G$-spaces, the universal minimal compact $G$-space $\mathcal{M}(G)$
is isomorphic (as a $G$-space) to a $G$-subspace of a product $\Pi_{\beta}Y_{\beta}$,
where each $Y_{\beta}$ is a minimal compact $G$-space isomorphic
to a $G$-subspace of some $X_{\alpha}$.
\end{thm}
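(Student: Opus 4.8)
The plan is to realize $\mathcal{M}(G)$ as a minimal subset $M$ of the universal ambit $\mathcal{S}(G)$, to push it forward through the separating family of maps into $\prod_{\alpha}E(X_{\alpha})$, and then to embed each enveloping semigroup $E(X_{\alpha})$ $G$-equivariantly into a power of $X_{\alpha}$ by means of the evaluation maps. Composing these embeddings and taking coordinate projections will exhibit $\mathcal{M}(G)$ inside a product of minimal subspaces of the $X_{\alpha}$'s.

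First I would fix the $G$-action on each $E(X_{\alpha})$: recall that $E(X_{\alpha})\subset X_{\alpha}^{X_{\alpha}}$ is a compact right-topological semigroup under composition, that $g\mapsto\check{g}$ embeds $G$ into it, and that $E(X_{\alpha})$ becomes a $G$-ambit with distinguished point the identity map under the left-multiplication action $g\cdot p=\check{g}\circ p$. With this convention the orbit of the identity is dense, so by universality of $\mathcal{S}(G)$ the natural map $\phi_{\alpha}\colon\mathcal{S}(G)\to E(X_{\alpha})$ of the definition is precisely the unique homomorphism of pointed $G$-systems, and the hypothesis that the family is representative says exactly that $\phi\teq\prod_{\alpha}\phi_{\alpha}\colon\mathcal{S}(G)\to\prod_{\alpha}E(X_{\alpha})$ is an embedding of $G$-systems.

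The key observation is that for each $x\in X_{\alpha}$ the evaluation map $\mathrm{ev}_{x}\colon E(X_{\alpha})\to X_{\alpha}$, $p\mapsto p(x)$, is $G$-equivariant: indeed $\mathrm{ev}_{x}(g\cdot p)=(\check{g}\circ p)(x)=\check{g}(p(x))=g\cdot\mathrm{ev}_{x}(p)$. Since an element $p\in X_{\alpha}^{X_{\alpha}}$ is nothing but the tuple $(p(x))_{x\in X_{\alpha}}$, the family $\{\mathrm{ev}_{x}\}_{x\in X_{\alpha}}$ assembles into the tautological $G$-embedding $E(X_{\alpha})\hookrightarrow\prod_{x\in X_{\alpha}}X_{\alpha}$. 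Composing with $\phi$ yields a $G$-embedding $\Psi\colon\mathcal{S}(G)\hookrightarrow\prod_{(\alpha,x)}X_{\alpha}$, where the index $(\alpha,x)$ runs over all pairs with $x\in X_{\alpha}$. Now I would restrict $\Psi$ to a minimal subset $M\subset\mathcal{S}(G)$ representing $\mathcal{M}(G)$. As $\Psi$ is a $G$-embedding, $\Psi(M)$ is a minimal subsystem of the product isomorphic to $\mathcal{M}(G)$; setting $Y_{(\alpha,x)}\teq\pi_{(\alpha,x)}(\Psi(M))$, each $Y_{(\alpha,x)}$ is the image of a minimal system under the equivariant coordinate projection, hence a minimal compact $G$-space and a $G$-subspace of $X_{\alpha}$. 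Since any subset of a product lies in the product of its projections, $\Psi(M)\subset\prod_{(\alpha,x)}Y_{(\alpha,x)}$, which is the asserted conclusion with $\beta$ ranging over the pairs $(\alpha,x)$.

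The routine points are that continuous equivariant images and embeddings of minimal systems are again minimal, and that the inclusion $E(X_{\alpha})\subset X_{\alpha}^{X_{\alpha}}$ of a compact space is automatically an embedding. The one place demanding care — and the main conceptual obstacle — is the bookkeeping of the $G$-action on $E(X_{\alpha})$: one must use the left action $g\cdot p=\check{g}\circ p$ (not the right action $p\circ\check{g}$) in order that the ambit structure making $\phi_{\alpha}$ a homomorphism and the equivariance of the evaluation maps hold simultaneously; with the opposite convention $\mathrm{ev}_{x}$ would intertwine the two actions incorrectly and the argument would break down.
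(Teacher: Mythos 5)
Your proof is correct. Note that the paper itself offers no proof of this statement: it is quoted as Theorem~4.1-adjacent material from Uspenskij's article \cite{U09}, so there is no internal argument to compare against. Your route — identify $\mathcal{M}(G)$ with a minimal subset $M\subset\mathcal{S}(G)$, use representativity to embed $\mathcal{S}(G)$ into $\Pi_{\alpha}E(X_{\alpha})$, observe that $E(X_{\alpha})\subset X_{\alpha}^{X_{\alpha}}$ is tautologically a $G$-subspace of a power of $X_{\alpha}$ under the left action $g\cdot p=\check{g}\circ p$, and then replace each coordinate by the (minimal) projection of the image of $M$ — is exactly the standard argument, and your flagged point about the choice of action is the right one to flag: only the left action makes the evaluation maps $\mathrm{ev}_{x}$ equivariant while keeping the orbit of the identity dense, and it is also the action for which joint continuity of $G\times E(X_{\alpha})\to E(X_{\alpha})$ follows coordinatewise from joint continuity on $X_{\alpha}$. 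All remaining steps (continuous equivariant images of compact minimal systems are minimal; a continuous injection from a compact space into a Hausdorff space is an embedding; a set is contained in the product of its projections) are indeed routine.
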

Denote by $Exp(X)$, the \textit{hyperspace of $X$}, defined to be
the collection all non-empty closed sets of $X$, equipped with the
Vietoris topology. $Exp(X)$ is known to be compact Hausdorff. Let
$G=Homeo(X)$ equipped with the compact-open topology. Notice $Exp(X)$
is a $G$-space. The following is Theorem $4.1$ of Uspenskij \cite{U09}:
\begin{thm}
Let $X$ be a compact space and
%%e $\tilde{{G}}$
$H$ a subgroup of $Homeo(X)$.
The sequence $\{Exp(Exp(X))^{n}\}_{n=1}^{\infty}$ of compact
%%e $\tilde{{G}}$
$H$-spaces is representative.\footnote{By this we mean
$Exp((Exp(X))^{n})$}
\end{thm}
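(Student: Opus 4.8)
The plan is to show that the family of enveloping semigroups $\{E(Exp((Exp(X))^n))\}$ separates points of $\mathcal{S}(H)$. I would work concretely with the universal ambit via its realization as the enveloping semigroup, or more precisely, by testing separation on points of $\mathcal{S}(H)$ through their images. Two distinct points $p \neq q$ of $\mathcal{S}(H)$ must be separated by \emph{some} $H$-ambit, since $\mathcal{S}(H)$ itself is an $H$-ambit and the identity map separates its own points; the real content is to push this separation down onto the specific representative family built from iterated hyperspaces. So the goal is: given $p \neq q$ in $\mathcal{S}(H)$, produce an $n$ and a point in $Exp((Exp(X))^n)$ whose $p$-translate and $q$-translate differ, i.e. show the natural maps $\mathcal{S}(H) \to E(Exp((Exp(X))^n))$ collectively separate $p$ and $q$.

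First I would recall the mechanism by which a single $G$-space $Y$ contributes to separating points of $\mathcal{S}(G)$: a point $p \in \mathcal{S}(G)$ acts on $Y$ as an element of $E(Y)$, and two points $p,q$ are distinguished by $Y$ precisely when there is some $y \in Y$ with $py \neq qy$. Thus I want to encode enough ``test configurations'' inside the iterated hyperspaces to detect any difference between $p$ and $q$. The key structural fact I would exploit is that finite subsets, finite tuples of closed sets, and finite collections of such tuples can all be faithfully represented as points of $Exp((Exp(X))^n)$ for suitable $n$: a single closed set sits in $Exp(X)$, an $n$-tuple of closed sets sits in $(Exp(X))^n$, and a finite (or closed) collection of such $n$-tuples sits in $Exp((Exp(X))^n)$. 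The action of $H$ on all these objects is the natural coordinatewise/setwise one, and it is continuous. The strategy is then to show that the totality of these representable configurations is rich enough that the induced maps into the various $E(Exp((Exp(X))^n))$ cannot identify two distinct points of the universal ambit.

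The crucial step, and the main obstacle, is to verify that $\{Exp((Exp(X))^n)\}_n$ carries enough information to reconstruct the action of an arbitrary $p \in \mathcal{S}(H)$ on an arbitrary compact $H$-space, or at least on a separating family of them. Concretely I would argue as follows: since $\mathcal{S}(H)$ is the universal ambit, any continuous $H$-map out of it is determined by its values on the dense orbit of $e_0$, so $p$ and $q$ are separated as soon as they act differently on some continuous $H$-image. The technical heart is to show that continuous functions on $X$ (hence the topology, hence via Vietoris the whole hyperspace structure) can be approximated or coded by closed-set data living in the iterated exponentials, so that distinguishing $p$ from $q$ on \emph{any} $H$-space reduces to distinguishing them on one of the $Exp((Exp(X))^n)$. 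I expect the delicate point to be the passage from ``separated by some abstract ambit'' to ``separated by this concrete hyperspace tower'', which is where Uspenskij's embedding of $\mathcal{S}(H)$ into a product of enveloping semigroups of hyperspaces does its work; one shows that the evaluation of $E(Exp(Exp(X)))$ recovers, through the two levels of the Vietoris construction, the action on arbitrary closed sets and on arbitrary closed families of closed sets, and that this already suffices to recover the action on the whole ambit. I would isolate this reconstruction as the central lemma and treat the reduction to it as essentially formal, relying on Theorem~1 (Uspenskij's first theorem above) to conclude the stated representativeness.
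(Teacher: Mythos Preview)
The paper does not prove this theorem; it simply cites it as Theorem~4.1 of Uspenskij~\cite{U09}. So there is no in-paper proof to compare against. Your proposal must therefore stand on its own, and as written it does not.

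The difficulty is that your outline correctly locates the entire content of the theorem in what you call ``the central lemma'' --- that the hyperspace tower $\{Exp((Exp(X))^n)\}_n$ is rich enough that any two distinct $p,q\in\mathcal{S}(H)$ act differently on some point of some level --- and then does not prove it. Saying that closed sets, tuples of closed sets, and families of such tuples can be encoded in these spaces is true but far from sufficient: one needs a concrete mechanism by which an arbitrary $H$-ambit (or at least a separating family of them) embeds, up to the relevant equivalence, into the tower. Uspenskij's actual argument does precisely this, using the observation that the graph of a continuous function $X\to[0,1]$ is a closed subset of $X\times[0,1]$, so that $C(X,[0,1])$ (and hence, by Stone--Weierstrass type reasoning, the whole right-uniform structure on $H$) can be recovered from the $H$-action on suitable hyperspaces. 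Your sketch gestures at ``continuous functions on $X$\dots can be approximated or coded by closed-set data'' without supplying any such mechanism.

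There is also a logical slip at the end: you propose to ``rely on Theorem~1 (Uspenskij's first theorem above) to conclude the stated representativeness.'' But that theorem takes representativeness as a \emph{hypothesis} and draws a conclusion about $\mathcal{M}(G)$; it cannot be used to establish representativeness. What you have written is a plausible table of contents for a proof, not a proof.
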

\begin{rem}
We refer to the action of $Homeo(X)$ on $Exp(Exp(X))^{n}$, $n=1,2,\ldots$
as
the
\textbf{hyperspace actions}.
\end{rem}

\subsection{H-homogeneous spaces and homogeneous Boolean algebras}

The following definitions are well know (see e.g. \cite{HNV04} Section
H-4):
\begin{enumerate}
\item A zero-dimensional compact Hausdorff topological space $X$ is called
\textbf{h-homogeneous} if every non-empty clopen subset of $X$ is
homeomorphic to the entire space X.
\item A Boolean algebra $B$ is called \textbf{homogeneous} if for any nonzero
element $a$ of $B$ the relative algebra
$B|a=\{x\in B:\, x\leq a\}$
is isomorphic to $B$.
\end{enumerate}
Using Stone's Duality Theorem (see \cite{BS81} IV$\S4$) a zero-dimensional
compact Hausdorff h-homogeneous space $X$ is the Stone dual of a
homogeneous Boolean
algebra, i.e. any such space is realized as the
space of ultrafilters $B^{\ast}$ over a homogeneous Boolean algebra
$B$ equipped with the topology for which $N_{a}=\{U\in B^{\ast}:\, a\in U\}$,
$a\in B$ is a base. Here are some examples of h-homogeneous spaces
(see \cite{SR89}):
\begin{enumerate}
\item The countable atomless Boolean algebra is homogeneous. It corresponds
by Stone duality to the Cantor set $C=\{0,1\}^{\mathbb{N}}$.
\item Every infinite free Boolean algebra is homogeneous. These Boolean
algebras correspond by Stone duality to the generalized Cantor spaces,
$\{0,1\}^{\kappa}$ , for infinite cardinals $\kappa$.
\end{enumerate}
More examples are discussed in section 1.1 of \cite{GG12}. In next subsection we discuss an especially interesting example:

\subsection{The corona}

For $X$ a Tychonoff space (completely regular Hausdorff space), the {\em Stone-\v{C}ech compactification\/} $\beta X$ of $X$ is a compact Hausdorff space, unique up to homeomorphism, such that $X$
densely embeds in $\beta X$, $X\hookrightarrow \beta X$  and such that the following universal property holds: Any continuous function $\phi:X\rightarrow K$, where $K$ is compact Hausdorff, can be uniquely extended to a continuous function $\tilde{\phi}:\beta X\rightarrow K$.
When $X$ is discrete and in particular
in the case of the integers, $\beta\Z$ has a concrete description as the collection of ultrafilters on $\Z$. The collection $\Ucal=\{U_{A}:A\subset\Z\}$,
where for each $A\subset\Z$ the set $U_{A}$ is the set of ultrafilters
in $\beta\Z$ containing $A$ ($U_{A}=\{p\in\beta\Z:A\in p\}$), forms
a basis for the compact Hausdorff topology on $\beta\Z$. The collection
of {\em fixed\/} ultrafilters; i.e. ultrafilters of the form $p_{n}=\{A\subset\Z:n\in A\}$
for $n\in\Z$, forms an open, discrete, dense subset of $\beta\Z$,
and one identifies this collection with $\Z$. For more details on the Stone-\v{C}ech compactification
see \cite{GJ60}, \cite{E78} and \cite{HS98}.

Given a locally compact Hausdorff space $Y$ it can be shown that $Y$ embeds inside $\beta Y$ as an open dense set. One defines the \textit{corona of $Y$ }(or \textit{remainder
of $Y$)} to be the compact space $\chi(Y)=\beta Y \setminus Y$. Let $\omega$
be the first infinite cardinal which we will identify with $\mathbb{Z}$.
The corona of the integers $\chi(\mathbb{Z})\triangleq\omega^{*}$,
which we will simply call the \textit{corona,} has been extensively
studied in the fields of set theory and logic. An excellent survey
article is \cite{vM84}. The notion of $P$-\textit{points} received
special attention, see \cite{K80} and \cite{BV80}. Specific corona
spaces such as $\chi(\mathbb{Z})$, $\chi(\mathbb{Q})$ and $\chi(\mathbb{R})$
appeared in the now classical monograph on commutative $C^*$-algebras
\cite{GJ60}. The name seems to originate in \cite{GP84}. The non-commutative
analogue of the corona spaces, namely the \textit{corona algebras
}play an important role in the solution of various lifting problems
in the theory of $C^*$-algebras (see \cite{OP89}).

For an infinite subset $A\subset\Z$ let $\hat{A}=\omega^{\ast}\cap{\rm Cls{}_{\beta\Z}(A)}$.
One sees easily that $\hat{A}=\hat{B}$ iff $A\triangle B$ is finite.
The collection $\Ucal=\{\hat{A}:A\subset\Z,\ A\ \text{infinite}\}$
is a basis consisting of clopen sets for the topology of $\omega^{\ast}$.
%
\begin{comment}
Let $\mathfrak{S}$ be the group of permutations of the integers equipped
with the discrete topology. The universal property of $\beta\Z$ enables
one to extend the natural action of $\mathfrak{S}$ on $\Z$ to an
action on $\beta\Z$. Evidently this action leaves $\omega^{\ast}$
invariant and it is also clear that each element of the normal subgroup
$\mathfrak{S}_{0}$ of permutations of only a finite number of integers,
restricts to the identity map on $\omega^{\ast}$. This means there
is $\mathfrak{S}/\mathfrak{S}_{0}$ action on $\omega^{\ast}$.
\end{comment}
{}For infinite $A\subset\Z$ one has $\hat{A}\simeq\chi(A)$ and moreover
$A\simeq\Z$ implies $\hat{A}\simeq\omega^{\ast}$ using the universal
property of the Stone-\v{C}ech compactification. This shows that $\omega^{\ast}$
is h-homogeneous. Let $P(\omega)$ be the Boolean algebra of all subsets
of $\omega$ and let $fin\subset P(\omega)$ be the ideal comprising
the finite subsets of $\omega$. Define the equivalence relations
$A\sim_{fin}B$, $A,B\in P(\omega)$, if and only if $A\triangle B$
is in $fin$. The quotient Boolean algebra $P(\omega)/fin$ is homogeneous.
This Boolean algebra corresponds by Stone duality to $\omega^{\ast}$.

\subsection{The space of maximal chains}

Let $K$ be a compact Hausdorff space. A subset
$c\subset Exp(K)$
is a \textit{chain} in $Exp(K)$ if for any $E,F\in c$ either $E\subset F$
or $F\subset E$. A chain is \textit{maximal} if it is maximal with
respect to the inclusion relation. One verifies easily that a maximal
chain in $Exp(K)$ is a closed subset of $Exp(K)$, and that $\Phi=\Phi(K)$,
the space of all maximal chains in $Exp(K)$, is a closed subset of
$Exp(Exp(K))$, i.e. $\Phi(K)\subset Exp(Exp(K))$ is a compact space.
Note that a $G$-action on $K$ naturally induces a $G$-action on
$Exp(K)$ and $\Phi$. It is easy to see that every $c\in\Phi$ has
a first element $F$ which is necessarily of the form $F=\{x\}$.
Moreover, calling $x\triangleq r(c)$ the {\em root\/} of the
chain $c$, it is clear that the map $\pi:\Phi\to K$, sending a chain
to its root, is a homomorphism of dynamical systems.

\subsection{The main theorem}

In view of Uspenskij's theorems mentioned in Subsection \ref{sub:Representative-families}
one is naturally interested in classifying the $G$-minimal subspaces
of $\{Exp(Exp(X))^{n}\}_{n=1}^{\infty}$ where $G=Homeo(X)$. The
aim of this work is to accomplish this task for the case of a Hausdorff
zero-dimensional compact h-homogneous space $X$ and $n=1$. This
turns out to be highly nontrivial and therefore hard to generalize
for $n\geq2$. Fortunately the universal minimal space can be calculated
in this case using different methods.
We refer the reader to our paper \cite{GG12}, where we show that
for these spaces $M(G) = \Phi(X)$.
Our main result in the present work is the following:
\begin{thm*}\label{main}
Let $X$ be a Hausdorff zero-dimensional compact h-homogeneous space.
The following list is an exhaustive list of the (closed) $Homeo(X)$ - minimal 
subspaces of $Exp(Exp(X)):$
\begin{enumerate}
\item $\{\{X\}\}$.
\item $\Phi$.
\item $\{\{\{x_{1},x_{2},\ldots,x_{j}\}\}_{(x_{1},x_{2},\ldots,x_{j})\in X^{j}}\}$
($j\in\mathbb{N}$).
\item $\{\{\{x_{1},x_{2},\ldots,x_{j}\},X\}_{(x_{1},x_{2},\ldots,x_{j})\in X^{j}}\}$
($j\in\mathbb{N}$).
\item $\{\{\{x_{1},x_{2},\ldots,x_{j}\},F\}_{(x_{1},x_{2},\ldots,x_{j})\in X^{j},F\in\xi}\}_{\xi\in\Phi}$
($j\in\mathbb{N}$).
\item $\{\{\{x_{1},x_{2},\ldots,x_{j}\}_{(x_{2},\ldots,x_{j})\in X^{j-1}}\}\}_{x_{1}\in X}$
($j\in\mathbb{N}$).
\item $\{\{F\cup\{x_{1},x_{2},\ldots,x_{q}\}\}_{(x_{1},x_{2},\ldots,x_{q})\in X^{q},F\in\xi}\}{}_{\xi\in\Phi}(q\geq1.)$.
\item $\{\{F\cup\{x_{1},x_{2},\ldots,x_{q}\},\{r(\xi),y_{2},\ldots,x_{l}\}\}_{(x_{1},x_{2},\ldots,x_{q})\in X^{q},(y_{2},\ldots,y_{l-1})\in X^{l-1},F\in\xi}\}{}_{\xi\in\Phi}
\newline
(l>q\geq1)$.
\item $\{\{F\cup\{x_{1},x_{2},\ldots,x_{q}\},
\{z_{1},z_{2},\ldots,z_{j}\}\}_{(x_{1},x_{2},\ldots,x_{q})\in X^{q},(z_{1},z_{2},\ldots,z_{j})\in X^{j},F\in\xi}\}{}_{\xi\in\Phi}(q,j\geq1)$.
\item
$\{\{F\cup\{x_{1},x_{2},\ldots,x_{q}\},\{r(\xi),y_{2},\ldots,y_{l}\},
\newline
\{z_{1},z_{2},\ldots,z_{j}\}\}_{(x_{1},x_{2},\ldots,x_{q})\in X^{q},(y_{2},\ldots,y_{l-1})\in X^{l-1},(z_{1},z_{2},\ldots,z_{j})\in X^{j},F\in\xi}\}{}_{\xi\in\Phi}$
\newline
$(l>q\geq1\leq j<l).$
\item $\{\{X,\{x_{1},x_{2},\ldots,x_{j}\}_{(x_{2},\ldots,x_{j})\in X^{j-1}}\}\}_{x_{1}\in X}$
$(j\in\mathbb{N}).$
\item $\{\{\{r(\xi),x_{2},\ldots,x_{j}\},F\}_{(x_{2},\ldots,x_{j})\in X^{j-1},F\in\xi}\}_{\xi\in\Phi}$$(j\in\mathbb{N}).$
\item $\{\{\{y_{1},y_{2},\ldots,y_{j'}\},\{x_{1},x_{2},\ldots,x_{j}\}_{(x_{2},\ldots,x_{j})\in X^{j-1},(y_{1},y_{2},\ldots,y_{j'})\in X^{j'}}\}\}_{x_{1}\in X}$.
\item $\{\{X,\{y_{1},y_{2},\ldots,y_{j'}\},\{x_{1},x_{2},\ldots,x_{j}\}_{(x_{2},\ldots,x_{j})\in X^{j-1},(y_{1},y_{2},\ldots,y_{j'})\in X^{j'}}\}\}_{x_{1}\in X}$.
\item $\{\{\{y_{1},y_{2},\ldots,y_{j'}\},\{r(\xi),x_{2},\ldots,x_{j}\},F\}_{(x_{2},\ldots,x_{j})\in X^{j-1},F\in\xi,(y_{1},y_{2},\ldots,y_{j'})\in X^{j'}}\}_{\xi\in\Phi}$.
\item $\{\{F|\, F\in Exp(X)\}\}$.
\item $\{\{F|\, x\in F\in Exp(X)\}\}_{x\in X}$.
\item $\{\{\{x_{1},x_{2},\ldots,x_{j}\},F\}_{(x_{1},x_{2},\ldots,x_{j})\in X^{j},x\in F\in Exp(X)}\}_{x\in X}$
\textup{$(j\in\mathbb{N}).$}
\end{enumerate}
\end{thm*}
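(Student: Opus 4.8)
The plan is to convert this topological classification into a finite combinatorial problem about collections of subsets of a finite set, solve the combinatorial problem, and then translate back.

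First I would set up a finite-approximation framework adapted to the zero-dimensionality and h-homogeneity of $X$. Every finite ordered clopen partition $\Pcal=(P_1,\dots,P_n)$ of $X$ induces a trace map sending a closed set $F$ to $\{i:F\cap P_i\neq\emptyset\}\subset\{1,\dots,n\}$, and hence sending a closed collection $\mathcal{A}\in Exp(Exp(X))$ to a collection of subsets of $\{1,\dots,n\}$. Since the Vietoris topology is recovered as the inverse limit of these traces over a cofinal family of refining partitions, a point of $Exp(Exp(X))$ is faithfully encoded by its coherent system of traces. The decisive consequence of h-homogeneity is that $Homeo(X)$ acts transitively on ordered clopen partitions of each fixed shape (every atom being a copy of $X$) and realizes every refinement and every relabeling of atoms; thus the induced $G$-action on traces is essentially the full combinatorial action of relabelings and refinements, which is what renders the dynamical problem combinatorial.

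Next I would determine which collections of subsets of a finite set can arise as traces of points of a minimal system. This is where the \emph{dual Ramsey theorem} enters. Given a minimal $M\subset Exp(Exp(X))$ and $\mathcal{A}\in M$, one alternately refines the partition and passes to the orbit closure; the Ramsey/pigeonhole mechanism then produces inside the orbit closure a point whose trace on every partition is \emph{stable}, meaning unchanged as an abstract collection under the refinement-and-relabeling operations, in the precise sense dictated by the theorem. Minimality then forces every point of $M$ to be stable. I would formalize a \emph{stable collection} as a collection of subsets of $\{1,\dots,n\}$ closed under the relevant spreading operations and carry out its combinatorial analysis. Since $\Phi$ is already known to be the universal minimal flow (so in particular minimal), the many families fibered over $\Phi$ via the root map $\pi:\Phi\to X$ can be analyzed relative to it.

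The heart of the argument, and the step I expect to be the main obstacle, is the exhaustive classification of stable collections. Each such collection should decompose into a bounded number of building blocks --- finite-support pieces producing the tuples $\{x_1,\dots,x_j\}$, the top element $X$, subfamilies cofinal in a maximal chain producing the $\Phi$-parameter and the constrained roots $r(\xi)$, the ``thick'' families of all tuples through a fixed coordinate (items (6),(12)--(14)), and the full hyperspace together with its point-fixing variants (items (16)--(18)) --- and the subtlety is to prove that these blocks can be combined only in the precise ways enumerated in (1)--(18), with every other configuration destroyed by the stability constraint. Once this list is pinned down I would finish in two directions: for \emph{sufficiency}, verify that each family is the orbit closure of a single point, is closed and $G$-invariant, and has no proper subsystem, which is routine given transitivity on tuples and partitions; and for \emph{exhaustiveness}, show that the coherent stable trace-data of an arbitrary minimal system reconstructs exactly one family on the list, using faithfulness of the trace encoding. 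The careful bookkeeping that matches each combinatorial stable type to its topological description in terms of $\Phi$ and the root map is where most of the effort will lie.
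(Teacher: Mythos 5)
Your outline follows the same route as the paper: traces of closed collections on finite clopen partitions (the paper's \emph{patterns} and \emph{signatures}), h-homogeneity upgraded to transitivity on ordered clopen partitions, the dual Ramsey theorem to manufacture, inside any minimal system, points all of whose traces are stable, and then a combinatorial classification that is translated back into the eighteen listed spaces. So the strategy itself is sound and is exactly the paper's. The problem is that the step you yourself flag as ``the main obstacle'' --- the exhaustive classification of stable collections --- is precisely where essentially all of the paper's work lies, and your proposal contains no argument for it: no candidate list of stable collections, no induction scheme, no mechanism for ruling out unwanted configurations. In the paper this is an elaborate induction on $m$ (base case $m=3$, then a case-by-case lifting analysis through partitions $\gamma\in\Pi\binom{m+1}{m}$) showing that a specific list of \emph{standard} patterns --- things like $\Acal_{j,m}$, $\Acal_{j,m}^{1}$, $\phi_{m}$, the two-parameter families $\dcal_{r,s}^{m}$, and their allowed unions --- are the only $m$-stable $m_{1}$-patterns. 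Declaring that the blocks ``can be combined only in the precise ways enumerated'' is the theorem to be proved, not a step of its proof.

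There is also a structural point your plan misses, which would derail the back-translation even if one granted the classification of stable collections. Stability of each individual trace is not the right invariant: one needs \emph{hereditary} stability --- the pattern on a coarse partition must arise, coherently, as the projection of a stable pattern on every finer partition --- and the hereditary patterns form a strictly smaller list than the stable ones (the paper discards several stable families, e.g.\ the types $\Acal_{m-2,m}^{1}\cup\ncal\cup\{\vec{m}\}$ with $\emptyset\neq\ncal\subsetneq H_{1,2}^{m}$, at this stage). Beyond that, the final matching requires two further properties you do not address: \emph{permutation stability} (the group action recovers a trace only up to a coordinate permutation, so one must know the hereditary patterns are invariant under any permutation preserving stability) and the dichotomy of \emph{unique stable lifts}: most hereditary patterns lift uniquely to finer partitions, which pins down the whole signature from one trace, but three families ($\Acal_{m,m}$, $\Acal_{m,m}^{1}$, $\Acal_{m,m}^{1}\cup\Acal_{j,m}$) do not, and handling their multiple lifts is exactly what produces items (16)--(18) of the list. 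Without the hereditary/usl analysis, your coherence argument cannot decide which stable trace-systems actually assemble into points of a minimal subspace, so the enumeration of the eighteen spaces cannot be completed.
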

The proof of the theorem is achieved by a detailed combinatorial analysis
of collections of subsets of a finite set,
which we expect will be, in itself, of an independent interest to combinatorists.
We thank Noga Alon for his advise pertaining to some aspects of this analysis.

It is interesting to compare this theorem to similar theorems in \cite{G08}. In that article it is shown that If $X$ is a closed manifold of dimension $2$ or higher, or the
Hilbert cube, then $M$, the space of maximal chains of continua, is
a minimal subspace of $Exp(Exp(X))$ under the action of ${\rm Homeo}(X)$. Further investigating $Exp(M)\subset Exp(Exp(Exp(X)))$ it is shown:

\begin{thm*}
If $X$ is a closed manifold of dimension $3$ or higher, or the
Hilbert cube, then the action of ${\rm Homeo}(X)$ on $Exp(M)$, the space of
non-empty closed subsets of the space of  maximal chains of continua,
has exactly the following minimal subspaces:
\def\theenumi{\arabic{enumi}}\def\labelenumi{\rm(\theenumi)}
\advance\leftmargini by9pt\begin{enumerate}
\item $\{M\}$,
\item $\{M_{x}\}_{x\in X},$ where
$M_{x}=\{c\in M(X) :  \bigcap\{c_{\alpha} : c_{\alpha}\in c\}=\{x\}\}$,
\item $ \{\{c\}\, : \,c\in M\}$.
\end{enumerate}

\end{thm*}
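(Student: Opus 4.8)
The plan is to organize everything around the \emph{root map} $r:M\to X$ sending a maximal chain of continua $c$ to the unique point $x$ with $\bigcap\{c_\alpha:c_\alpha\in c\}=\{x\}$; this is a continuous $Homeo(X)$-equivariant surjection, and $M_x=r^{-1}(x)$ is precisely the fiber appearing in item (2). Before proving exhaustiveness I would check that the three listed systems are minimal. The set $M\in Exp(M)$ is fixed by $Homeo(X)$, so $\{M\}$ is trivially minimal; the singleton system $\{\{c\}:c\in M\}$ is equivariantly homeomorphic to $M$, which is minimal by the cited theorem, hence minimal. For the middle system I would verify that $r$ is open (it is automatically closed, being continuous between compact Hausdorff spaces); openness is equivalent to continuity of $x\mapsto M_x=r^{-1}(x)$ into the Vietoris hyperspace, and then $x\mapsto M_x$ is a continuous equivariant injection of the minimal system $X$, whose image $\{M_x\}_{x\in X}$ is a closed minimal copy of $X$.

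For exhaustiveness I would let $\mathcal N\subset Exp(M)$ be an arbitrary minimal subsystem, fix $A\in\mathcal N$, and push everything down to $X$ through the continuous equivariant map $q=Exp(r):Exp(M)\to Exp(X)$, $A\mapsto r(A)$. Being the image of a minimal system, $q(\mathcal N)$ is a minimal subsystem of $(Exp(X),Homeo(X))$. A preliminary lemma, provable by the standard ``spread and shrink'' technique available when $X$ is a manifold of dimension $\ge 2$ or the Hilbert cube, identifies the minimal subsystems of $Exp(X)$ as exactly $\{X\}$ and $\{\{x\}\}_{x\in X}$: the orbit closure of any closed $F$ that is neither a point nor all of $X$ contains both $X$ (spread $F$ to be dense) and a singleton (compress $F$ into a small ball), so it cannot be minimal. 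This splits the analysis into two cases according to $q(\mathcal N)$.

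In the case $q(\mathcal N)=\{\{x\}\}_{x\in X}$ every $A\in\mathcal N$ lies in a single fiber $M_x$, and $A\mapsto x$ is a factor map $\mathcal N\to X$. Since $X$ is homogeneous (the evaluation $g\mapsto gx_0$ admits local sections) and $\mathcal N$ is minimal, the fiber $\mathcal N_{x_0}=\{A\in\mathcal N:A\subset M_{x_0}\}$ is a minimal subsystem for the stabilizer $G_{x_0}=\{g:gx_0=x_0\}$ acting on $Exp(M_{x_0})$. I would then prove the analogue of the $Exp(X)$ lemma \emph{inside the fiber}: using homeomorphisms supported near $x_0$ (here dimension $\ge 3$ supplies the room to manipulate chains while pinning the root), the $G_{x_0}$-orbit closure of any closed $A\subsetneq M_{x_0}$ that is not a single chain contains both $M_{x_0}$ and a singleton, hence is not minimal. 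Thus $A$ is either the whole fiber $M_{x_0}$, producing item (2) after saturating under $G$, or a single chain $\{c\}$, producing item (3). In the case $q(\mathcal N)=\{X\}$ every $A$ meets every fiber, and by the same spreading technique I would show that the $Homeo(X)$-orbit closure of such an $A$ contains the fixed point $M$; minimality then forces $\mathcal N=\{M\}$ and $A=M$, which is item (1).

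The main obstacle is the package of geometric flexibility lemmas driving both the $Exp(X)$ classification and, more seriously, the fiberwise and global spreading arguments: one must show that $Homeo(X)$ (respectively the stabilizer $G_{x_0}$) can simultaneously route and approximate arbitrary configurations of maximal chains of continua. This is exactly where the hypothesis jumps from $\dim\ge 2$, which suffices for minimality of $M$, to $\dim\ge 3$, which is needed to realize and disentangle several chains at once and to move a chain while fixing its root. Controlling this routing for a closed set $A$ of chains spread over all of $X$, so as to make its orbit approximate every chain and thereby recover the fixed point $M$ in the second case, is the step I expect to be hardest and most technical.
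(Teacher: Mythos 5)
A preliminary remark: the theorem you were asked to prove is not proved in this paper at all --- it is quoted verbatim from the cited article \cite{G08} as a point of comparison with the paper's main theorem on h-homogeneous spaces --- so there is no internal proof to compare your argument against; what follows assesses your proposal on its own merits. Your skeleton is reasonable: the root map $r:M\to X$, the push-forward $Exp(r):Exp(M)\to Exp(X)$, the classification of the minimal subsets of $(Exp(X),{\rm Homeo}(X))$ as $\{X\}$ and the singletons, and the use of local sections of $g\mapsto gx_{0}$ to show that the fiber $\mathcal{N}_{x_{0}}$ of a minimal $\mathcal{N}$ is minimal under the stabilizer $G_{x_{0}}$ are all correct reductions, and your verification that the three listed systems are minimal is essentially right.

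The genuine gap is twofold. First, your fiberwise dichotomy is false as stated: you claim that for closed $A\subsetneq M_{x_{0}}$ which is not a single chain, the $G_{x_{0}}$-orbit closure of $A$ contains \emph{both} $M_{x_{0}}$ and a singleton. If $A$ is finite with $|A|=k\geq2$ this is impossible, since the collection of subsets of cardinality at most $k$ is closed in the Vietoris topology (separate $k+1$ points by disjoint open sets), so the orbit closure of $A$ consists of sets of cardinality at most $k$ and can never contain the infinite set $M_{x_{0}}$. What the argument actually requires is a disjunction, with a different geometric lemma in each branch: for finite $A$ one must \emph{merge} chains, i.e.\ produce $g_{i}\in G_{x_{0}}$ with $g_{i}c_{1},\dots,g_{i}c_{k}$ all converging to one and the same maximal chain; for infinite $A$ one must \emph{spread}, i.e.\ make $gA$ $\epsilon$-dense in $M_{x_{0}}$ --- and here note that $A$ may have arbitrarily small diameter, so uniform continuity of a single homeomorphism is an obstruction that must be circumvented by blowing up the small differences between distinct chains. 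Second, and more fundamentally, these merging/spreading (routing) lemmas, together with the global version needed when $r(A)=X$ (approximating all of $M$ by $gA$), are precisely where the hypothesis $\dim\geq3$ enters and constitute the entire mathematical content of the theorem; your proposal only flags them as ``hardest and most technical'' without any argument, not even a sketch of how to merge two distinct chains sharing a root. As it stands, the proposal is a plausible reduction of the theorem to unproven geometric lemmas, not a proof.
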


\section{Patterns }

The following section deals with results in combinatorics of finite
sets. In subsequent sections these results will be used in the context
of hyperspace actions.

\subsection{Patterns and partitions}
\begin{defn}
Let $m,n$ be positive integers. A non-empty collection $\Pcal$ of non-empty vectors in $(Exp(\vec{m}))^{n}$,
where $\vec{{m}}=\{1,2,\dots,m\}$ is called an \textbf{$m_{n}$-pattern}.
%%e
Thus an $m_n$-pattern has the form
$$
\pcal =\{P_s=(P^1_s, P_s^2, \cdots ,P_s^n):
%%ee
s=1,2,\dots,t\},
$$
where each $P_s^i$ is a nonempty subset of $\vec{m}$.
We denote the collection of $m_{n}$-patterns by $C_{n}(m)$. The
number of distinct $m_{n}$-patterns is $r=2^{(2^{m}-1)^{n}}-1$ (we exclude
the empty set). Denote by $C_{n}=\bigcup_{m\in\mathbb{N}}C_{n}(m)$, the
collection of \textbf{$n$-dimensional} patterns. Denote by $C(m)=\bigcup_{n\in\mathbb{N}}C_{n}(m)$, the
collection of \textbf{$m$-pattern}.

The idea behind this definition is that patterns represent neighborhoods
of element of the space $Exp(Exp(X))^n$.
It is much easier to think about, or visualize, a $1$-pattern than a
higher order ones. So we suggest that, in the sequel, the reader will
consider, when each new definition is introduced, the one-dimensional case
first.

As a motivating example consider the $m_1$-pattern $\phi_m \in C_1(m)$:
$$
\phi_{m}=\{\{1\},\{1,2\},\dots,\{1,2,\dots,m\}\}.
$$
Note that given a clopen ordered partition $\alpha=(A_{1},A_{2}\ldots,A_{m})$
of the compact zero-dimensional space $X$, the pattern $\phi_m$
can serve as a typical neighborhood $\mathfrak{U}=\mathfrak{U}(\phi_m)$ in the Vietoris topology on $Exp(Exp(X)))$ of a maximal chain $c \in \Phi(x)$.
Explicitly, set
\begin{align*}
\mathfrak{U} & = \langle  \langle A_{1} \rangle,
\langle  A_{1}, A_{2}  \rangle,
\ldots,
\langle  A_{1},A_{2}, \ldots,A_{m}  \rangle \rangle\\
& = \langle \mathcal{U}_1,  \mathcal{U}_2, \dots, \mathcal{U}_m \rangle,
\end{align*}
where for a compact space $K$ and open sets $V_1,\dots,V_k$
we let
$$
\langle V_1, V_2, \dots, V_k \rangle =
\{F \in Exp(K) : \ F \subset \cup_{j=1}^k V_j,\ {\text{\rm and}}\
F \cap V_j \ne \emptyset\ \ \forall \ 1 \le j \le k\}.
$$
In other words, a maximal chain $c \in \Phi(X)$ is in $\mathfrak{U}$
if and only if every
$F \in c$ is in at least one of the sets
$\mathcal{U}_j = \langle A_{1},A_{2}, \ldots,A_{j}  \rangle$ and for every
$1 \le j \le m$ there is at least one $F \in c \cap
\langle A_{1},A_{2}, \ldots,A_{j}  \rangle$
(see Lemma \ref{basis} below).  Pictorially we can think of $c$ as a
chain of closed subsets of $X$ which grows continuously to fill the sets
$A_1$, then $A_1\cup A_2$, etc. and eventually the entire space
$X = A_1\cup A_2 \cup \cdots \cup A_m$.

An ordered partition $\gamma=(C_{1},\ldots,C_{k})$ of $\{1,\ldots,s\}$ into
$k$ nonempty sets is
said to be
\textbf{ naturally ordered} if for every $1\leq i<j\leq k$,
$\min(C_{i})<\min(C_{j})$. We denote by $\Pi\binom{s}{k}$ the collection
of naturally ordered partitions of $\{1,\ldots,s\}$ into $k$ nonempty
sets.
\end{defn}
%
\begin{comment}
We denote: $\Pi=\bigcup_{s\in\mathbb{N}}\bigcup_{1\leq k\leq s}\Pi\binom{s}{k}$
for $k\in\mathbb{N}$.
\end{comment}
{}
\begin{defn}
Let $\Pcal$ be an $m_{n}$-pattern and $\gamma=(C_{1},\ldots,C_{k})\in\Pi\binom{m}{k}$.
The \textbf{induced $k_{n}$-pattern} $\Pcal_{\gamma}$
%The \textbf{partition induced $k_{n}$-pattern} $\Pcal_{\gamma}$
is defined as the collection $\Pcal_{\gamma}=\{P_{\gamma}:P\in\Pcal\}$,
where \[
P_{\gamma}=\{(j_{1,}j_{2},\ldots,j_{n}):C_{j_{1}}\times C_{j_{2}}\times\cdots\times C_{j_{n}}\cap P\ne\emptyset\}.\]

Let $\beta=(B_{1},\ldots,B_{s})\in\Pi\binom{k}{s}$ and $\gamma=(C_{1},\ldots,C_{k})\in\Pi\binom{m}{k}$,
we
define the \textbf{amalgamated partition} $\gamma_{\beta}=(G_{1},\ldots,G_{s})\in{m \choose s}$
by:
\[
G_{j}=\bigcup_{i\in B_{j}}C_{i}\]
 Notice $\gamma_{\beta}$ is naturally ordered and $(\Pcal_{\gamma})_{\beta}=\Pcal_{\gamma_{\beta}}.$
\end{defn}

\subsection{Notation}

Define $^{+}:Exp(\vec{m})\rightarrow Exp(\vec{m+1})$ by the mapping
$$
A\mapsto A^+ = \{j+1|\, j\in A\}.
$$
In addition $\emptyset^{+}=\emptyset$.
Define $^{-}:Exp(\vec{m}\setminus\{1\})\rightarrow Exp(\vec{m-1})$
by the mapping
$$
A\mapsto A^- = \{j-1|\, j\in A\}.
$$
For $j\in\vec{m+1}$,
define $D_{j}:Exp(\vec{m})\rightarrow Exp(\vec{m+1})$ by the mapping
$$
A\mapsto (A\cap\vec{j-1})\cup(A\setminus\vec{j-1})^{+}.
$$
%%

%%e ???
%%y What is not clear?
For $\gamma\in\Pi\binom{m+1}{m},\ \gamma = (C_1,C_2,\dots,C_m)$,
denote by
$p_{\gamma}: Exp(\vec{m+1}) \rightarrow Exp(\vec{m})$ the mapping
$A\mapsto A_{\gamma}$, where, as above,
$A_\gamma =\{j : C_j \cap A \ne \emptyset\}$.
%$p_{\alpha}:2^{\vec{m+1}}\rightarrow2^{\vec{m}}$ the mapping
%$A\mapsto A_{\alpha}$.

%%e ???
%%y What is not clear?
Let $i,j\in\vec{m+1}$ with $i<j.$ Define:
\[
\gamma_{i,j}^{m+1}=\gamma_{i,j}=
(\{1\},\{2\},\ldots,\{i-1\},\{i,j\},\{i+1\},\ldots,\{j-1\},\{j+1\},\ldots,\{m+1\})
\]
Notice $\gamma_{i,j}^{m+1}\in\Pi\binom{m+1}{m}$. For $P\in Exp(\vec{m})$,
with $m\in P$ we introduce the notation: $P_{m+1}=P\cup\{m+1\}$
and $\hat{P}=P_{m+1}\setminus\{m\}$. Notice $p_{\gamma_{m,m+1}}^{-1}(P)=\{P,\hat{P},P_{m+1}\}$.

\subsection{The standard Patterns}

%
\begin{comment}
In this section we deal only with one-dimensional patterns.
\end{comment}
{}
\begin{defn}\label{standard}
Let $\vec{m}=\{1,\ldots,m\}$ and $\vec{0}=\emptyset$. For $1\leq i_{1}<i_{2}<\cdots<i_{l}\leq m$
let $I_{i_{1},i_{2},\ldots,i_{l}}^{m}=\vec{m}\setminus\{i_{1},i_{2},\ldots,i_{l}\}$ and
define $e(I_{i_{1},i_{2},\ldots,i_{l}}^{m})=i_{1}$. The indices $i_{1},i_{2},\ldots,i_{l}$
are referred to as the \textbf{holes} of $I_{i_{1},i_{2},\ldots,i_{l}}^{m}.$
For $0\leq h\leq m$, $1\leq l\leq m$, $h+l\le m+1$ define $H_{h,l}^{m}=\{I_{d_{1},d_{2},\ldots,d_{h}}^{m}\}_{l\leq d_{1}<d_{2}<\cdots<d_{h}}$
and $EH_{h,l}^{m}=\{I_{d_{1},d_{2},\ldots,d_{h}}^{m}\}_{l=d_{1}<d_{2}<\cdots<d_{h}}$,
where we use the convention $H_{0,l}^{m}=EH_{0,l}^{m}=\vec{m}$.

\label{def:The standard Examples}The following $m_{1}$-patterns
are called
{\textbf {standard}} :
\begin{enumerate}
\item $\{\vec{m}\}$ .
\item $\phi_{m}=\{\{1\},\{1,2\},\dots,\vec{m}\}$.
\item \label{enu:Acal_j_m}For every $1\le j\le m$ the collection $\Acal_{j,m}$
of all subsets of $\vec{m}$ of cardinality $\le j$.
\item \label{enu:Acal_j_m_cup_vec(m)}For every $1\le j\le m-2$ the collection
$\Acal_{j,m}\cup\{\vec{m}\}$.
\item For every $1\le j\le m-2$ the collection $\Acal_{j,m}\cup\phi_{m}$.
\item \label{enu:case_A^1_j,m}For every $1\le j\le m$ the collection $\Acal_{j,m}^{1}$
of all subsets of $\vec{m}$ of cardinality $\le j$ containing $1$.
\item \label{enu:acal^1_ncal_vec(m)} The collection $\Acal_{m-2,m}^{1}\cup\ncal\cup\{\vec{m}\}$
for $\emptyset\neq\ncal\subsetneq H_{1,2}^{m}$ ($\ncal=\emptyset$
corresponds to case (\ref{enu:Acal_one_j_m_cup_vec(m)}) and the case
$\ncal=H_{1,2}^{m}$ corresponds to case (\ref{enu:case_A^1_j,m}))
\item $\Acal_{m-2,m}\cup\ncal$ for $\ncal\subset H_{1,1}^{m}$, $\ncal\neq H_{1,2}^{m}$
with $|\ncal|=m-1$ ($\ncal=H_{1,2}^{m}$ corresponds to case (\ref{enu:Acal^1_j_m_cup_Acal_j'})).
\item \label{enu:Acal_m-2,m_cup_ncal_cup_vec(m)}Let $\emptyset\neq\ncal\subsetneq H_{1,1}^{m}$.
$\ncal\neq H_{1,2}^{m}$. The collection $\Acal_{m-2,m}\cup\ncal\cup\{\vec{m}\}$
($\ncal=\emptyset$ corresponds to case (\ref{enu:Acal_j_m_cup_vec(m)}), $\ncal=H_{1,1}^{m}$ corresponds to case (\ref{enu:Acal_j_m})and $\ncal=H_{1,2}^{m}$ corresponds to case (\ref{enu:Acal^1_j_m_cup_Acal_j'})).
\item \label{enu:dcal^m_s_r}For every $2\leq r<r+1<s<m$ the collection
$\dcal_{r,s}^{m}\triangleq\Acal_{m-r-1,m}^{1}\cup\bigcup_{h=1}^{r}H_{h,s-h+1}^{m}\cup\{\vec{m}\}$
($s=m$ corresponds to case (\ref{enu:Acal_j_m_phi_m}), $s=r+1$
corresponds to $\dcal_{r-1,s}^{m}$ and $r=1$ corresponds to case
(\ref{enu:acal^1_ncal_vec(m)}) ).
\item For every $2\leq r<r+1<s<m$ and $1\leq j\leq m-r-1$, $\dcal_{r,s}^{m}\cup\Acal_{j,m}$.
\item \label{enu:Acal_one_j_m_cup_vec(m)}For every $1\le j\le m-2$ the
collection $\Acal_{j,m}^{1}\cup\{\vec{m}\}.$
\item \label{enu:Acal_j_m_phi_m}For every $1\le j\le m-2$ the collection
$\Acal_{j,m}^{1}\cup\phi_{m}$.
\item \label{enu:Acal^1_j_m_cup_Acal_j'}For every $2\le j\le m$ and $j'<j$
the collection $\Acal_{j,m}^{1}\cup
%%e ???
%%y What is not clear?
\Acal_{j',m}$.
\item For every $2\le j\le m-2$ and $j'<j$ the collection $\Acal_{j,m}^{1}\cup\Acal_{j',m}\cup\{\vec{m}\}$
($j=m-1,m$ correspond to case (\ref{enu:Acal^1_j_m_cup_Acal_j'})).
\item For every $2\le j\le m-2$ and $j'<j$ the collection $\Acal_{j,m}^{1}\cup\Acal_{j',m}\cup\phi_{m}$
($j=m-1,m$ correspond to case (\ref{enu:Acal^1_j_m_cup_Acal_j'})).
\item For every $1\le j<m-2$ the collection $\Acal_{j,m}\cup\Acal_{m-2,m}^{1}\cup\ncal\cup\{\vec{m}\}$
where $\emptyset\neq\ncal\subsetneq H_{1,2}^{m}$.
\end{enumerate}
\end{defn}
Note:
\begin{enumerate}
\item $\dcal_{r_{1},s_{1}}^{m}\cup\dcal_{r_{2},s_{2}}^{m}=\dcal_{\min\{r_{1},r_{2}\},\min\{s_{1},s_{2}\}}^{m}.$
\item $D_{s,s+1}^{m}=D_{s-1,s+1}^{m}$.
\end{enumerate}

\subsection{Stable patterns}
The notion of a stable pattern which we are about to define is of crucial
importance for our analysis. We surmise that it may be relevant for other
problems in the combinatorics of finite sets.
\begin{defn}\label{SP}
An $m_{n}$-pattern $\Pcal$ is said to be $k$\textbf{-stable} if
for every partition $\alpha\in\Pi\binom{m}{k'}$ for $2\leq k'\leq k$
the induced $k'_{n}$-pattern $\Pcal_{\alpha}$ is a constant pattern
(i.e. it does not depend on $\alpha$).
\noindent Denote $SP_{n}(m)=\{\Pcal : \,\Pcal\textrm{ is an \ensuremath{m}-stable \ensuremath{m_{n}}-pattern}\}$.
\end{defn}

As an example the reader is advised to check that the $m_1$-pattern
$\phi_m$ is $m$-stable.
\begin{lem}
\label{lem:D_j}Let $1\leq i<j\leq m+1$ and $Q\subset\vec{m}$ with
$i\notin Q$ then $p_{\gamma_{i,j}}^{-1}(Q)=\{D_{j}(Q)\}$.
\end{lem}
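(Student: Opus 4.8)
The plan is to prove the statement by a direct computation, reading off the block structure of the partition $\gamma_{i,j}$ and exploiting the fact that all of its blocks are singletons except for the single two-element block $\{i,j\}$ sitting in position $i$. First I would record this structure explicitly: writing $\gamma_{i,j}=(C_{1},\dots,C_{m})$, one has $C_{p}=\{p\}$ for $p\in\vec{j-1}\setminus\{i\}$, then $C_{i}=\{i,j\}$, and finally $C_{p}=\{p+1\}$ for $j\leq p\leq m$. This is the only nontrivial bookkeeping step, and the shift by one for positions $p\geq j$ (reflecting that the block originally containing $j$ has been absorbed into position $i$) is exactly the index shift built into the map $D_{j}$.

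For existence I would simply verify that $A=D_{j}(Q)$ satisfies $A_{\gamma_{i,j}}=Q$. Since $D_{j}(Q)=(Q\cap\vec{j-1})\cup(Q\setminus\vec{j-1})^{+}$, the set $D_{j}(Q)$ never contains the element $j$, and it contains $i$ iff $i\in Q$; thus the hypothesis $i\notin Q$ guarantees that the block $C_{i}=\{i,j\}$ is disjoint from $D_{j}(Q)$, so position $i$ is absent from $A_{\gamma_{i,j}}$, matching $i\notin Q$. For every other position $p$ the block $C_{p}$ is a singleton, and checking the three ranges above one finds that $C_{p}$ meets $D_{j}(Q)$ precisely when $p\in Q$. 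Hence $A_{\gamma_{i,j}}=Q$.

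For uniqueness I would argue that $Q$ already pins $A$ down completely. For each singleton block $C_{p}=\{c_{p}\}$, that is for every $p\neq i$, membership $p\in Q=A_{\gamma_{i,j}}$ is equivalent to $c_{p}\in A$, so $A$ is determined on $\vec{m+1}\setminus\{i,j\}$. The only freedom lies in the elements $i$ and $j$, which share the block $C_{i}$; but $C_{i}\cap A\neq\emptyset$ iff $i\in A_{\gamma_{i,j}}=Q$, and since $i\notin Q$ we must have both $i\notin A$ and $j\notin A$. Thus $A$ is uniquely determined, and comparing with the ranges computed for existence shows this unique $A$ is exactly $D_{j}(Q)$. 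The main (and essentially only) subtlety is the role of the hypothesis $i\notin Q$: it is precisely what collapses the potential two-fold ambiguity at the merged block $\{i,j\}$ to a single choice, in contrast to the three-element preimage recorded above when $i\in Q$ (as in the case $p_{\gamma_{m,m+1}}^{-1}(P)$ for $m\in P$).
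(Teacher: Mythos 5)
Your proof is correct and is precisely the direct verification that the paper dismisses as ``Trivial'': you read off the block structure of $\gamma_{i,j}$, check that $D_{j}(Q)$ maps to $Q$, and observe that the hypothesis $i\notin Q$ forces $A\cap\{i,j\}=\emptyset$, pinning down the preimage uniquely (the paper's own suppressed remark is exactly your observation that $i,j\notin D_{j}(Q)$). Nothing is missing; your write-up simply makes explicit the computation the authors leave to the reader.
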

\begin{proof}
Trivial. %
\begin{comment}
Notice $i,j\notin D_{j}(Q)$.
\end{comment}
{} \end{proof}
%%e ???
%%y What is not clear?
%\begin{comment}

%%e In the following lemmas, are these  $m_n$ patterns ?
%%y Yes.
% Maybe it will be convenient to say that unless stated explicitly otherwise
% an $m$-pattern with no subscript means an $m_1$-pattern ?
%%y Defacto the situation is that when the subscript is not stated, then it holds for any $m_n$ pattern for any $n$.
\begin{lem}
\label{lem:m partitions are enough}Let $m\in\mathbb{N}$ and let
$\Pcal$ be an $(m+1)$-pattern. If there exists an $m$-stable $m$-pattern
$\Qcal$ so that for every $\gamma\in\Pi{m+1 \choose m}$, $\Pcal_{\gamma}=\Qcal$
then $\Pcal$ is $(m+1)$-stable.\end{lem}
\begin{proof}
For every $\alpha,\alpha'\in\Pi{m+1 \choose k}$, there exist $\gamma,\gamma'\in\Pi{m+1 \choose m}$
and $\beta,\beta'\in\Pi{m \choose k}$ so that $\Pcal_{\alpha}=(\Pcal_{\gamma})_{\beta}=\Qcal_{\beta}=\Qcal_{\beta'}=(\pcal_{\gamma'})_{\beta'}.$ \end{proof}

\begin{lem}\label{lemma:trivial}
Let $m\in\mathbb{N}$ and let $\Pcal$ be an
$(m+1)$-pattern. Let $\pi\in\Pi{m+1 \choose m}$ and $\Qcal$ an
$m$-pattern so that $\Pcal_{\pi}=\Qcal$ then $\Pcal=\bigcup_{Q\in\Qcal}p_{\pi}^{-1}(Q)\cap\Pcal$.\end{lem}
\begin{proof}
Trivial. \end{proof}
%\end{comment}
\begin{lem}
\label{lemma:p^-1=00003D00003D00003D00003D00003D00003D00003D1}Let
$m\in\mathbb{N}$ and let $\Pcal$ be an $(m+1)$-pattern. Let $\gamma\in\Pi{m+1 \choose m}$
and $\Qcal$ an $m$-pattern so that $\Pcal_{\gamma}=\Qcal$. Let
$Q\in\Qcal$ and assume $p_{\gamma}^{-1}(Q)=\{A\}$, then $A\in\Pcal$. \end{lem}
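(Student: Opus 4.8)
The plan is to deduce the statement purely by unwinding the definition of the induced pattern, using two facts only: that the hypothesis $\Pcal_{\gamma}=\Qcal$ makes the induced operation $P\mapsto P_{\gamma}$ surject onto $\Qcal$, and that the fiber over $Q$ has been assumed to be a singleton. Note first what the conclusion really asserts: the set $A$ is singled out combinatorially as the unique preimage $p_{\gamma}^{-1}(Q)$ inside $Exp(\vec{m+1})$, a priori with no reference to $\Pcal$; the lemma says this $A$ must nonetheless belong to $\Pcal$.

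First I would recall that, by the definition of the induced $m_{n}$-pattern, $\Pcal_{\gamma}=\{P_{\gamma}:P\in\Pcal\}$, and that the operation $P\mapsto P_{\gamma}$ is exactly the map $p_{\gamma}$ of the Notation subsection (applied coordinate-wise in the $n$-dimensional case). Hence the hypothesis $Q\in\Qcal=\Pcal_{\gamma}$ guarantees that there is at least one $P\in\Pcal$ with $P_{\gamma}=Q$. This is the only place where surjectivity is used, and it is immediate from the displayed definition of $\Pcal_{\gamma}$.

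Second I would observe that the equality $P_{\gamma}=Q$ says precisely that $P$ lies in the full preimage $p_{\gamma}^{-1}(Q)\subset Exp(\vec{m+1})$. By the standing assumption this preimage is the singleton $\{A\}$, so $P=A$. Since $P$ was chosen in $\Pcal$, we conclude $A\in\Pcal$, as required.

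There is no genuine obstacle here; the argument is a one-line identification of definitions, which is presumably why the two preceding lemmas of the same flavour were dispatched as ``Trivial.'' The only point that deserves a moment's attention is the interpretation of $p_{\gamma}^{-1}(Q)$ as the fiber taken in all of $Exp(\vec{m+1})$ rather than within $\Pcal$: it is exactly this reading that gives the lemma its (modest) content, since it lets us conclude that the abstractly-defined unique preimage $A$ coincides with the member of $\Pcal$ supplied by surjectivity. Once the fiber is known to be a singleton, any preimage point whatsoever is forced to equal $A$, and the surjectivity of the induced map onto $\Qcal$ exhibits such a preimage point inside $\Pcal$.
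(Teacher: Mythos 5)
Your proof is correct and is precisely the definition-unwinding argument the paper has in mind: the paper's own proof of this lemma is simply the word ``Trivial,'' and your write-up supplies exactly the intended reasoning (surjectivity of $P\mapsto P_{\gamma}$ onto $\Qcal=\Pcal_{\gamma}$ produces a preimage in $\Pcal$, and the singleton-fiber hypothesis forces it to equal $A$). No gap, and no divergence from the paper's approach.
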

\begin{proof}
Trivial. \end{proof}

%%---------------------------------
\begin{comment}
\begin{lem}
\label{lemma:m-1 case}Let $m\in\mathbb{N}$ and let $\Pcal$ be an
$(m+1)$-stable $(m+1)_{1}-$pattern. Let $\Qcal$ be an $m$-pattern
so that $\Pcal_{\gamma}=\Qcal$ for all $\gamma\in\Pi{m+1 \choose m}$.
If $P\in\Qcal$ so that $|P|\leq m-1$, then $P\in\Pcal$. If in addition
$m\in P$, then $\hat{P}\in\Pcal$.
\end{lem}
\begin{proof}
Select $i<m+1$ so that $i\notin P$. Notice $p_{\gamma_{i,m+1}}^{-1}(P)=\{D_{m+1}(P)\}=\{P\}$
by Lemma \ref{lem:D_j}. Conclude $P\in\Pcal$ by Lemma \ref{lemma:p^-1=00003D00003D00003D00003D00003D00003D00003D1}.
If in addition $m\in P$, notice $p_{\gamma_{i,m}}^{-1}(P)=\{D_{m}(P)\}=\{\hat{P}\}$.
\end{proof}
\end{lem}

\begin{lem}
\label{lemma:empty_j_j}Let $m\in\mathbb{N}$ and let $\Pcal$ be
an $(m+1)$-stable $(m+1)_{1}-$pattern. Let $\Qcal$ be an $m$-pattern
so that $\Pcal_{\gamma}=\Qcal$ for all $\gamma\in\Pi{m+1 \choose m}$.
Let $P\in Exp(\vec{m})$. If there exist $1\leq i<j\leq m$ so that
$i,j\notin P$ then $p_{\gamma_{i,j}}(P_{m+1})\in\Qcal$ iff $P_{m+1}\in\Pcal$.\end{lem}
\begin{proof}
%
\begin{comment}
Notice $p_{\gamma_{i,j}}(P_{m+1})=(P\cap\vec{j-1)}\cup(P\setminus\vec{j})^{-}\cup\{m\}$.

{}By Lemma \ref{lem:D_j} $p_{\gamma_{i,j}}^{-1}(p_{\gamma_{i,j}}(P_{m+1}))=\{P_{m+1}\}$.
Conclude $P_{m+1}\in\Pcal$ by Lemma \ref{lemma:p^-1=00003D00003D00003D00003D00003D00003D00003D1}. \end{proof}
\end{comment}
%%---------------------------------

\begin{lem}
\label{lem:H is in pcal}Let $\pcal$ be an $(m+1)$-stable
$(m+1)_{1}$-pattern.
Let $1\leq h\leq m$, $1\leq l\leq m$, $h+l\le m+1$ and $\pi=\gamma_{m,m+1}^{m+1}\in\Pi{m+1 \choose m}$.
If $\pcal_{\pi}\cap EH_{h,i}^{m}=\emptyset$ for all $i<l$, and $\pcal_{\pi}\cap EH_{h,l}^{m}\neq\emptyset$
then $\pcal\cap H_{h+1,1}^{m+1}=H_{h+1,l}^{m+1}$ and $\pcal_{\pi}\cap H_{h,1}^{m}=H_{h,l}^{m}$.\end{lem}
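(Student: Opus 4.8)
The plan is to set $\mathcal{Q}=\mathcal{P}_{\pi}$ and first record two structural facts that come for free from $(m+1)$-stability. Using the amalgamation identity $(\mathcal{P}_{\gamma})_{\beta}=\mathcal{P}_{\gamma_{\beta}}$ together with the stability hypothesis, the induced pattern $\mathcal{P}_{\gamma}$ is independent of $\gamma\in\Pi\binom{m+1}{m}$ (so $\mathcal{P}_{\gamma}=\mathcal{Q}$ for every such $\gamma$), and $\mathcal{Q}$ is itself $m$-stable. I would then organize each of the two conclusions as a double inclusion. Writing $H_{h,1}^{m}=\bigsqcup_{i\ge1}EH_{h,i}^{m}$ and $H_{h,l}^{m}=\bigsqcup_{i\ge l}EH_{h,i}^{m}$, the inclusion $\mathcal{Q}\cap H_{h,1}^{m}\subseteq H_{h,l}^{m}$ is just a restatement of the hypothesis $\mathcal{Q}\cap EH_{h,i}^{m}=\emptyset$ for $i<l$, so it requires nothing further.

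For the inclusion $\mathcal{P}\cap H_{h+1,1}^{m+1}\subseteq H_{h+1,l}^{m+1}$ I would argue by contradiction with a collapse trick. Suppose $A=I_{d_1,\ldots,d_{h+1}}^{m+1}\in\mathcal{P}$ has smallest hole $d_1<l$. Merging the largest hole with an adjacent non-hole kills exactly one hole without disturbing the smaller ones: if $d_{h+1}<m+1$ apply $\gamma_{d_{h+1},d_{h+1}+1}$, and if $d_{h+1}=m+1$ apply $\pi=\gamma_{m,m+1}$ (when $d_h<m$ this deletes the top hole, when $d_h=m$ it fuses the two top holes into one). In every case the image has exactly $h$ holes with smallest hole still $d_1$, and it lies in $\mathcal{P}_{\gamma}=\mathcal{Q}$; thus $\mathcal{Q}\cap EH_{h,d_1}^{m}\ne\emptyset$ with $d_1<l$, contradicting the hypothesis.

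Next I would reduce both remaining ``$\supseteq$'' inclusions to the single statement $H_{h,l}^{m}\subseteq\mathcal{Q}$. For $\mathcal{P}\cap H_{h+1,1}^{m+1}\supseteq H_{h+1,l}^{m+1}$, take $A$ with $h+1\ge2$ holes and smallest hole $\ge l$ and merge its two largest holes $d_h,d_{h+1}$ via $\gamma_{d_h,d_{h+1}}$. Since both are holes, Lemma \ref{lem:D_j} gives $p_{\gamma_{d_h,d_{h+1}}}^{-1}\bigl(p_{\gamma_{d_h,d_{h+1}}}(A)\bigr)=\{A\}$, while the image has $h$ holes with smallest hole $\ge l$, hence lies in $H_{h,l}^{m}\subseteq\mathcal{Q}$; Lemma \ref{lemma:p^-1=00003D00003D00003D00003D00003D00003D00003D1} then yields $A\in\mathcal{P}$. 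The same single-valued-preimage idea (using $\pi$ and Lemma \ref{lemma:trivial}) gives the converse passage back to $\mathcal{Q}$. So the whole lemma comes down to proving $H_{h,l}^{m}\subseteq\mathcal{Q}$.

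This last inclusion is the crux. Starting from the single witness $Q_0\in EH_{h,l}^{m}\cap\mathcal{Q}$, I must show that $m$-stability forces the entire threshold family $\{I_{d_1,\ldots,d_h}^{m}:d_1\ge l\}$ into $\mathcal{Q}$ — that is, an all-or-nothing spreading within each class $EH_{h,i}^{m}$ followed by an upward propagation of the smallest hole. For $h\ge2$ one can attempt this by merging two holes to produce single-valued preimages and matching images inside the common induced pattern $\mathcal{Q}^{(m-1)}$, i.e. by an induction on $m$. The genuine obstacle is that this preimage calculus cannot reach the extreme classes: a set with a single hole is never the unique preimage under any nontrivial merge, so the $h=1$ base case (and, more generally, the homogeneity of each individual class) cannot be obtained from the trivial preimage lemmas alone. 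This spreading is precisely where the dual Ramsey theorem and the detailed analysis of stable collections must be invoked, and I expect it to be the hard part of the argument.
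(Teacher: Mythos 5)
Your opening moves are sound: the inclusion $\pcal_{\pi}\cap H_{h,1}^{m}\subseteq H_{h,l}^{m}$ is indeed just the hypothesis; your collapse trick correctly rules out $A\in\pcal\cap EH_{h+1,i}^{m+1}$ for $i<l$ (the paper does the same, merging two holes rather than a hole with a neighbour); and your reduction of $H_{h+1,l}^{m+1}\subseteq\pcal$ to $H_{h,l}^{m}\subseteq\pcal_{\pi}$ via a two-hole merge, Lemma \ref{lem:D_j}, and the single-valued-preimage lemma is valid. But the statement you reduce everything to --- spreading from the single witness $Q_{0}\in\pcal_{\pi}\cap EH_{h,l}^{m}$ to all of $H_{h,l}^{m}$ --- is the entire content of the lemma, and you explicitly leave it unproven. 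That is a genuine gap, not a routine verification: the whole point of the lemma is that one element of $EH_{h,l}^{m}$ forces the full threshold family, and nothing in your write-up establishes this.

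Moreover, your diagnosis of the missing step is off in two ways. First, the dual Ramsey theorem plays no role here: in the paper it is only used later (Corollary \ref{stable-cor}, Theorem \ref{thm:Minimal Space contains stable signature}) to manufacture stable patterns from arbitrary ones; the present lemma is proved by purely elementary means. Second, your obstruction (``a set with a single hole is never the unique preimage under any nontrivial merge'') only blocks single-step preimage arguments at level $m$. The paper works at level $m+1$, where the extra hole makes unique-preimage lifts available, and uses \emph{composite} moves: given $P\in\pcal_{\pi}$ with $e(P)<q$, the lift $D_{q}(P)$ lies in $\pcal$ by Lemma \ref{lem:D_j}, and projecting it back along a \emph{different} partition $\gamma_{i,j}$ lands in $\pcal_{\pi}$ again by stability; each lift-then-project step shifts, creates, or cancels one hole. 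Iterating these moves in an induction on initial segments, the paper shows that for every target $R=I_{s_{1},\ldots,s_{h+1}}^{m+1}\in H_{h+1,l}^{m+1}$ there is $P\in\pcal_{\pi}\cap EH_{h,l}^{m}$ with $P\cap\vec{s_{h}}=R\cap\vec{s_{h}}$ and $e(P)\leq e(R)$, whence $R=D_{s_{h+1}}(P)\in\pcal$. This gives $H_{h+1,l}^{m+1}\subseteq\pcal$ \emph{first}, and only then $H_{h,l}^{m}\subseteq\pcal_{\pi}$, by projecting with $\pi$ (for $P\in H_{h,l}^{m}$ one has $D_{m+1}(P)\in H_{h+1,l}^{m+1}$ and $P=\pi(D_{m+1}(P))$) --- i.e.\ the paper's reduction runs in the direction opposite to yours, precisely because the hard spreading argument is feasible at the level with $h+1\geq2$ holes. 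So your proposal is an accurate account of the easy boundary of the proof, but the core argument is missing and your guess about how to fill it points in the wrong direction.
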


\begin{proof}
Let $Q\in\pcal_{\pi}\cap EH_{h,l}^{m}$
which is
non-empty by assumption. Our
first goal is to show that $H_{h+1,l}^{m+1}\subset\pcal.$ Let $R=I_{s_{1},s_{2},\ldots,s_{h+1}}^{m+1}\in H_{h+1,l}^{m+1}.$
We will show using induction that for any $l-1\leq j\leq s_{h}$ there
exists $P=P(j)\in\pcal_{\pi}\cap EH_{h,l}^{m}$ so that $P\cap\vec{j}=R\cap\vec{j}$
and $e(P)\leq e(R)$ (obviously if the statement is true one can choose
$P(s_{h})$ for all $j$ but this can be concluded only after the
induction is carried through). First we verify the base case by choosing
$P=Q$ and noticing trivially that $P\cap\vec{l-1}=R\cap\vec{l-1}=\vec{l-1}$
and $l=e(P)\leq e(R)$ . Secondly let $l-1\leq j\leq s_{h}-1$ and
assume there exist $P\in\pcal_{\pi}\cap EH_{h,l}^{m}$ so that $P\cap\vec{j}=R\cap\vec{j}$
and $e(P)\leq e(R)$. We will prove there exists $P'\in\pcal_{\pi}\cap EH_{h,l}^{m}$
so that $P'\cap\vec{j+1}=R\cap\vec{j+1}$ and $e(P')\leq e(R)$. If
$P\cap\vec{j+1}=R\cap\vec{j+1}$ we are done. Assume $P\cap\vec{j+1}\neq R\cap\vec{j+1}$.
We distinguish between several cases. We repeatedly use the fact that
$D_{q}(P)\in\Pcal$ for any $e(P)<q\leq m+1$ as seen by Lemma \ref{lem:D_j}.
\begin{itemize}
\item $j+1\notin R$ and $j+1\in P$. As $R$ has at most $h-1$ holes in
$\vec{j},$ so does $P$ in $\vec{j+1}$. Therefore there exists $k>j+1$
so that $k\notin P$. Moreover by assumption $e(P)\leq e(R)\leq j+1$
and as $j+1\in P$ then $e(P)<j+1$. Define $P'=\pi_{j+2,k+1}(D_{j+1}(P))$
($P'$ is constructed by adding a hole at $j+1$ and canceling the
hole at $k$). Clearly $P'\in\pcal_{\pi}\cap EH_{h,l}^{m}$ and $e(P')=e(P)\leq e(R)$.
\item $j+1\in R$, $j+1\notin P$, $e(P)\leq j$ and $\exists k>j+1$, $k\in P$.
Define $P'=\pi_{j+1,k}(D_{m+1}(P))$ ($P'$ is constructed by canceling
the hole at $j+1$ and adding a hole at $m$). Clearly $P'\in\pcal_{\pi}\cap EH_{h,l}^{m}$.
As $P$ has a hole in $\vec{j}$ so does $P'$ and we have $e(P')=e(P)\leq e(R)$.
\item $j+1\in R$, $e(P)=j+1$ (equivalent to $j+1\notin P$, $e(P)>j$
and implies $e(R)\geq j+2$) and $j+2\in P$. Define $P'=\pi_{1,j+1}(D_{j+3}(P))$
($P'$ is constructed by canceling the hole at $j+1$ and adding a
hole at $j+2$) Clearly $P'\in\pcal_{\pi}\cap EH_{h,l}^{m}$. In addition
notice $e(P')=j+2\leq e(R)$.
\item $j+1\in R$, $e(P)=j+1$ (equivalent to $j+1\notin P$, $e(P)>j$
and implies $e(R)\geq j+2$), $j+2\notin P$ and $\exists k>j+2$,
$k\in P$. $P'=\pi_{j+1,k}(D_{m+1}(P))$ ($P'$ is constructed by
canceling the hole at $j+1$ and adding a hole at $m$). Clearly $P'\in\pcal_{\pi}\cap EH_{h,l}^{m}$.
In addition notice $e(P')=j+2\leq e(R)$.
\item $j+1\in R$, $j+1\notin P$, $e(P)\leq j$ and $\sim\exists k>j+1$,
$k\in P$. Notice that by assumption $j+1\leq s_{h}$. However as
$j+1\in R$, we conclude $j+1<s_{h}$ which implies $j+2<s_{h+1}\le m+1$.
Define $P'=\pi_{j+2,m+1}(D_{j+1}(P))$ ($P'$ is constructed by canceling
the hole at $j+1$ and adding a hole at $j+2$) Clearly $P'\in\pcal_{\pi}\cap EH_{h,l}^{m}$.
In addition notice $e(P')=j+2\leq e(R)$.
\item $j+1\in R$, $j+1\notin P$, $e(P)>j$ and $\sim\exists k>j+1$, $k\in P$.
As $j\geq l-1$ and $e(P)=l$, we conclude $j=l-1$. As $\sim\exists k>j+1$,
$k\in P$ we must have $P=I_{l,l+1,\ldots,m}^{m}$. This implies $h=m-l+1$.
In this case $H_{h+1,l}^{m+1}=\{I_{l,l+1,\ldots,m+1}^{m+1}\}.$ As
$I_{l,l+1,\ldots,m+1}^{m+1}=D_{m+1}(I_{l,l+1,\ldots,m}^{m})$, we
have $H_{h+1,l}^{m+1}\subset\pcal$ and we can stop the induction.
\end{itemize}
At the end of the induction we have either proven $H_{h+1,l}^{m+1}\subset\pcal$
or shown that for all $R=I_{s_{1},s_{2},\ldots,s_{h+1}}^{m+1}\in H_{h+1,l}^{m+1}$
there exists $P\in\pcal_{\pi}\cap EH_{h,l}^{m}$ so that $P\cap\vec{s_{h}}=R\cap\vec{s_{h}}$
and $e(P)\leq e(R)$. This implies $R=D_{s_{h+1}}(P)$ and therefore
we have $R\in\Pcal$. We can thus finally conclude $H_{h+1,l}^{m+1}\subset\pcal.$
This implies $H_{h,l}^{m}\subset\pcal_{\pi}.$ Indeed let $P\in H_{h,l}^{m}$,
then $Q=D_{m+1}(P)\in H_{h+1,l}^{m+1}$ and $P=\pi(Q)$. By assumption
$\pcal_{\pi}\cap EH_{h,i}^{m}=\emptyset$ for all $i<l$, we can therefore
conclude $\pcal_{\pi}\cap H_{h,1}^{m}=H_{h,l}^{m}$. Assume for a
contradiction $A\in\pcal\cap EH_{h+1,i}^{m+1}$ for some $i<l$. Select
$j>i$ so that $j\notin A$ (such
%%e
% an
$j$ exists as $(h+1)\geq2$).
Notice $p_{\gamma_{i,j}}(A)\in EH_{h,j}^{m}$ which is a contradiction.
We conclude that $\pcal\cap H_{h+1,1}^{m+1}=H_{h+1,l}^{m+1}$.\end{proof}

\begin{comment}
\begin{lem}
Let $\pcal$ be a $(m+1)$-stable $(m+1)_{1}-$pattern. If $\pcal_{\pi}\cap EH_{h,l}^{m}\neq\emptyset$
and $l\geq3$ then $\bigcup_{k=1}^{l-2}H_{h+k,l-k+1}^{m+1}\bigcup\bigcup_{k=l-1}^{m}H_{h+k,2}^{m+1}\subset\pcal$.\end{lem}
\begin{proof}
By Lemma \ref{lem:H is in pcal} $H_{h+1,l}^{m+1}\subset\pcal.$ As
$l\geq3$, $p_{\gamma_{1,2}}(H_{h+1,l}^{m+1})=H_{h+1,l-1}^{m}.$ Repeating
the same argument successively we get $\bigcup_{k=1}^{l-1}H_{h+k,l-k+1}^{m+1}\subset\pcal$.
Let $Q\in EH_{h+l-1,2}^{m+1}$ and assume $k+l\leq m$. This implies
there exists $i\geq l$, so that $i\in Q$. Notice $p_{\gamma_{1,i}}(Q)\in\pcal_{\pi}\cap EH_{h+l-1,2}^{m}$.
By Lemma \ref{lem:H is in pcal}, $H_{h+l,2}^{m+1}\subset\pcal$.
Repeating the same argument successively we get $\bigcup_{k=l}^{m}H_{h+k,2}^{m+1}\subset\pcal.$\end{proof}
\end{comment}

\begin{lem}
\label{lem:pcal_j,m+1 is contained}Let $\pcal$ be a $(m+1)$-stable
$(m+1)_{1}$-pattern. Assume $j<m$. If $\acal_{j,m}\subset\pcal_{\pi}$
($\acal_{j,m}^{1}\subset\pcal_{\pi}$) then $\acal_{j,m+1}\subset\pcal$
($\acal_{j,m+1}^{1}\subset\pcal$ respectively).\end{lem}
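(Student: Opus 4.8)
The plan is to establish both inclusions simultaneously by checking membership of each set individually. Fix $A\subset\vec{m+1}$ with $1\le|A|\le j$ (and, in the ``$^1$'' case, with $1\in A$); the goal is to show $A\in\pcal$. The governing idea is to exhibit $A$ as the single point of a fibre $p_{\gamma_{i,j'}}^{-1}(Q)$ over a set $Q$ that the hypothesis already places in the induced pattern, and then to quote Lemma~\ref{lem:D_j} and Lemma~\ref{lemma:p^-1=00003D00003D00003D00003D00003D00003D00003D1}. The stability hypothesis enters right at the start: since $\pcal$ is $(m+1)$-stable, the induced $m$-pattern $\pcal_\gamma$ is one and the same for every $\gamma\in\Pi\binom{m+1}{m}$, so $\pcal_{\gamma_{i,j'}}=\pcal_\pi\supset\acal_{j,m}$ (resp.\ $\supset\acal_{j,m}^{1}$) no matter which admissible pair $i<j'$ I use. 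This is precisely what lets me tailor the collapsing map $\gamma_{i,j'}$ to $A$.

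First I would find two coordinates on which to collapse $A$. Because $|A|\le j\le m-1$, the set $A$ omits at least two elements of $\vec{m+1}$, so there are $i<j'$ with $i,j'\notin A$. Since $D_{j'}$ leaves the coordinates $<j'$ fixed and shifts the larger ones up by one, its image is exactly the family of subsets of $\vec{m+1}$ missing $j'$; thus there is a unique $Q\subset\vec m$ with $D_{j'}(Q)=A$, and $|Q|=|A|\le j$ while $i\notin Q$ (as $i<j'$ and $i\notin A$). Now Lemma~\ref{lem:D_j} gives $p_{\gamma_{i,j'}}^{-1}(Q)=\{D_{j'}(Q)\}=\{A\}$, and because $Q\in\acal_{j,m}\subset\pcal_{\gamma_{i,j'}}$, Lemma~\ref{lemma:p^-1=00003D00003D00003D00003D00003D00003D00003D1} forces $A\in\pcal$. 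As $A$ was an arbitrary member of $\acal_{j,m+1}$, this proves $\acal_{j,m+1}\subset\pcal$.

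For the ``$^1$'' version the single additional requirement is to keep the marked element $1$ inside $Q$, which forces the two collapsed coordinates to lie above $1$. Here $1\in A$, and $A\setminus\{1\}$ meets the $m$-element set $\{2,\dots,m+1\}$ in at most $j-1\le m-2$ points, so at least two elements of $\{2,\dots,m+1\}$ are omitted; choosing $i<j'$ among them gives $j'>i\ge2$, so $D_{j'}$ fixes the coordinate $1$ and hence $1\in A$ yields $1\in Q$, i.e.\ $Q\in\acal_{j,m}^{1}$. The rest of the argument is verbatim the same. I expect the only friction to be purely combinatorial bookkeeping: verifying that two omitted coordinates can always be selected --- which is exactly where the hypothesis $j<m$ is indispensable, since a set of size $m$ can omit only a single coordinate --- and verifying that collapsing at $\gamma_{i,j'}$ respects the constraint $1\in Q$. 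Both reduce to the elementary counting above, so no genuine obstacle remains.
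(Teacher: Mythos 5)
Your proof is correct and follows essentially the same route as the paper's: the paper also picks two coordinates $i<k$ omitted by the given set, collapses them via $\gamma_{i,k}$, observes that the projection preserves cardinality and hence lands in $\acal_{j,m}\subset\pcal_{\pi}$, and then invokes the singleton-fibre fact (Lemma \ref{lem:D_j} together with Lemma \ref{lemma:p^-1=00003D00003D00003D00003D00003D00003D00003D1}) to conclude membership in $\pcal$. The only cosmetic difference is direction of presentation --- the paper projects the set in $\vec{m+1}$ down to $\vec{m}$, while you lift via $D_{j'}$ --- and you additionally write out the ``$1\in A$'' case that the paper dismisses as similar; your bookkeeping there is correct.
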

\begin{proof}
Let $Q\in\acal_{j,m+1}$. Let $1\leq i<k\leq m+1$, so that $i,k\notin Q$.
Notice $|p_{\gamma_{i,k}}(Q)|=|Q|$ and therefore $p_{\gamma_{i,k}}(Q)\in\acal_{j,m}$.
As $p_{\gamma_{i,k}}^{-1}(p_{\gamma_{i,k}}(Q))=\{Q\}$ we have $Q\in\Pcal$.
The proof for $\acal_{j,m+1}^{1}$ is similar. \end{proof}
\begin{lem}
\label{lem:H^m+1_1_1_in_pcal}Let $\pcal$ be a $(m+1)$-stable $(m+1)_{1}$-pattern.\textup{
Assume }$\pcal_{\pi}\cap H_{1,1}^{m}=H_{1,e}^{m}$\textup{ where $e\geq3$,
then }$\pcal\cap H_{1,1}^{m+1}=H_{1,e+1}^{m+1}$\textup{. If in addition
$\vec{m}\in\pcal_{\pi}$, then $\vec{m+1}\in\pcal$.}\end{lem}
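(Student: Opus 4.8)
The plan is to first promote the one-hole hypothesis on $\pcal_\pi$ to a two-hole statement about $\pcal$, and then to extract the one-hole sets of $\pcal$ by projecting along the amalgamation $\gamma_{1,2}$ of the two smallest coordinates. Write $\qcal:=\pcal_\pi$; since $\pcal$ is $(m+1)$-stable we have $\pcal_\gamma=\qcal$ for every $\gamma\in\Pi\binom{m+1}{m}$, in particular $\pcal_{\gamma_{1,2}}=\qcal$. First I would apply Lemma~\ref{lem:H is in pcal} with $h=1$ and $l=e$: since $EH_{1,i}^m=\{I_i^m\}$, the hypothesis $\pcal_\pi\cap H_{1,1}^m=H_{1,e}^m$ gives $\pcal_\pi\cap EH_{1,i}^m=\emptyset$ for $i<e$ and $\pcal_\pi\cap EH_{1,e}^m\ne\emptyset$, so the lemma yields the two-hole structure $\pcal\cap H_{2,1}^{m+1}=H_{2,e}^{m+1}$; equivalently, $I_{d_1,d_2}^{m+1}\in\pcal$ iff $d_1\ge e$. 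This is the fact that drives the whole argument.

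The main step, and the only real obstacle, is the inclusion $H_{1,e+1}^{m+1}\subseteq\pcal$. A one-hole set is never a singleton fibre of any $p_{\gamma_{i,j}}$, so Lemma~\ref{lem:D_j} cannot produce such sets directly; instead I would isolate them inside three-element fibres. Fix $d$ with $e+1\le d\le m+1$; as $e\ge3$ we have $d\ge4$, hence $1,2\in I_d^{m+1}$ and a direct computation gives $p_{\gamma_{1,2}}(I_d^{m+1})=I_{d-1}^m\in\qcal$ (because $d-1\ge e$). Computing the fibre yields $p_{\gamma_{1,2}}^{-1}(I_{d-1}^m)=\{I_d^{m+1},I_{1,d}^{m+1},I_{2,d}^{m+1}\}$. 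Since $I_{d-1}^m\in\pcal_{\gamma_{1,2}}$, at least one of these three sets lies in $\pcal$ (Lemma~\ref{lemma:trivial}); but the two two-hole members have smallest hole $1$ and $2$, both $<e$, so the two-hole structure excludes them, and therefore $I_d^{m+1}\in\pcal$. This is exactly the place where $e\ge3$ is indispensable: it is what forces both competing holes $1,2$ to lie below the threshold $e$.

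For the reverse inclusion I would argue by contradiction: if $I_d^{m+1}\in\pcal$ with $d\le e$, then for $d<e$ one has $d\le m-1$ and $p_\pi(I_d^{m+1})=I_d^m$, forcing $I_d^m\in\qcal$ and hence $d\ge e$; while for $d=e$ (using $e\ge3$) the computation $p_{\gamma_{1,2}}(I_e^{m+1})=I_{e-1}^m\in\qcal$ forces $e-1\ge e$ — both absurd. Combined with the previous paragraph this gives $\pcal\cap H_{1,1}^{m+1}=H_{1,e+1}^{m+1}$. Finally, for the last assertion, assuming $\vec m\in\qcal$ I would apply the same device to the fibre $p_{\gamma_{1,2}}^{-1}(\vec m)=\{\vec{m+1},I_1^{m+1},I_2^{m+1}\}$: at least one member lies in $\pcal$, while $I_1^{m+1}$ and $I_2^{m+1}$ are excluded by the one-hole structure just established (their holes $1,2$ are $<e+1$), leaving $\vec{m+1}\in\pcal$.
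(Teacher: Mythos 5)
Your proof is correct, and although it uses the same central mechanism as the paper --- the three-element fibre $p_{\gamma_{1,2}}^{-1}(I_{d-1}^{m})=\{I_{d}^{m+1},I_{1,d}^{m+1},I_{2,d}^{m+1}\}$, from which the two two-hole members must be eliminated --- it justifies that elimination by a genuinely different and more self-contained route. The paper rules out $I_{2,k+1}^{m+1}$ by projecting along $\gamma_{k,k+1}$ onto $I_{2}^{m}\notin\pcal_{\pi}$ (valid from the hypotheses), but rules out $I_{1,k+1}^{m+1}$ by observing that $1\notin p_{\gamma_{2,3}}(I_{1,k+1}^{m+1})$; since that image is the two-hole set $I_{1,k}^{m}$, and the lemma's stated hypotheses say nothing about two-hole sets of $\pcal_{\pi}$, this step leans on information available only at the point of application (in case (8a) of Theorem \ref{thm: standard are the only stable}, Lemma \ref{lem:H is in pcal} has already been applied there, so $\Pcal\cap H_{2,1}^{m+1}=H_{2,e}^{m+1}$ is known before this lemma is invoked). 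You instead internalize exactly that step: invoking Lemma \ref{lem:H is in pcal} with $h=1$, $l=e$ (legitimate, since $EH_{1,i}^{m}=\{I_{i}^{m}\}$ and, implicitly, $e\le m$) pins down the whole two-hole layer $\pcal\cap H_{2,1}^{m+1}=H_{2,e}^{m+1}$, which then kills both bad fibre members at once and makes the lemma provable from its hypotheses alone. You also close a small gap in the paper's reverse inclusion: the paper excludes $I_{k'}^{m+1}$ only for $k'<e$ via $p_{\pi}$, overlooking $k'=e$, for which $p_{\pi}(I_{e}^{m+1})=I_{e}^{m}\in\pcal_{\pi}$ gives no contradiction; your separate treatment of $d=e$ via $p_{\gamma_{1,2}}(I_{e}^{m+1})=I_{e-1}^{m}\notin\pcal_{\pi}$ is exactly what is needed. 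The final assertion is the same in spirit (the paper merges $\{e-1,e\}$ where you merge $\{1,2\}$). In short: the paper's version is shorter in its context of use, while yours stands on its own as a proof of the lemma as stated.
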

\begin{proof}
Denote $\mcal=\{I_{k}^{m+1} : \, I_{k}^{m+1}\in\Pcal\}.$ Choose $e\leq k\leq m$
and notice that $p_{\gamma_{1,2}}^{-1}(I_{k}^{m})=\{I_{k+1}^{m+1},I_{1,k+1}^{m+1},I_{2,k+1}^{m+1}\}$.
Clearly $I_{1,k+1}^{m+1}\notin\Pcal$ as $1\notin p_{\gamma_{2,3}}(I_{1,k+1}^{m+1})$.
Also $I_{2,k+1}^{m+1}\notin\Pcal$ as $p_{\gamma_{k,k+1}}(I_{2,k+1}^{m+1})
\allowbreak =I_{2}^{m}\notin\Pcal_{\pi}$.
Conclude $H_{1,e+1}^{m+1}\subset\mcal$. Let $k'<e$, then $p_{\gamma_{m,m+1}}(I_{k'}^{m+1})=I_{k'}^{m}\notin\Pcal_{\pi}$.
Conclude $\mcal=H_{1,e+1}^{m+1}$. If $\vec{m}\in\pcal_{\pi}$, then
as $\vec{m}\notin p_{\gamma_{e-1,e}}(\mcal)$ we conclude $\vec{m+1}\in\Pcal$. \end{proof}

\begin{thm}
\label{thm:Standard_Examples_are_Stable}Let $m\geq2$. The standard
patterns are $m$-stable $m_{1}$-patterns and in particular for all
$\gamma\in\Pi{m+1 \choose m}$:\end{thm}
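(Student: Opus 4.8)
The plan is to prove that each of the seventeen families listed in Definition \ref{standard} is an $m$-stable $m_1$-pattern, and moreover to establish the recursive statement that will make the induction in Lemma \ref{lem:m partitions are enough} applicable: namely, that for each standard $(m+1)_1$-pattern $\pcal$ and every $\gamma\in\Pi\binom{m+1}{m}$, the induced pattern $\pcal_\gamma$ is again a standard $m_1$-pattern, independent of $\gamma$. By Lemma \ref{lem:m partitions are enough}, once we know the induced patterns under the collapsing maps $p_\gamma$ (for $\gamma$ collapsing two adjacent indices) are a fixed $m$-stable $m$-pattern, stability of $\pcal$ itself follows. So the core of the argument is a bookkeeping computation: for each of the seventeen standard forms, compute $\pcal_\gamma$ explicitly and identify it on the list.

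The key reduction is that every $\gamma\in\Pi\binom{m+1}{m}$ amalgamates exactly two indices into a single block, so up to the natural ordering it suffices to understand the action of $p_\gamma$ on the basic building blocks $\Acal_{j,m}$, $\Acal^1_{j,m}$, $\phi_m$, $\{\vec{m}\}$, and the hole-collections $H^{m}_{h,l}$, $EH^{m}_{h,l}$. First I would record the transformation rules: applying $p_\gamma$ to $\Acal_{j,m+1}$ yields $\Acal_{j,m}$ (cardinalities can only drop under collapsing, and Lemma \ref{lem:pcal_j,m+1 is contained} gives the reverse containment), $p_\gamma(\phi_{m+1})=\phi_m$, $p_\gamma(\{\vec{m+1}\})=\{\vec m\}$, and the effect on $H^{m+1}_{h,l}$ is governed by the hole-counting statements of Lemma \ref{lem:H is in pcal} and Lemma \ref{lem:H^m+1_1_1_in_pcal}. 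Since each standard pattern is a union of these pieces and $p_\gamma$ commutes with unions, the induced pattern is the union of the induced pieces; checking that this union is again standard and $\gamma$-independent is then a finite case analysis indexed by the seventeen cases. The notes following Definition \ref{standard}, recording that $\dcal^m_{r_1,s_1}\cup\dcal^m_{r_2,s_2}=\dcal^m_{\min,\min}$ and the degeneracy $D^m_{s,s+1}=D^m_{s-1,s+1}$, are exactly the identities needed to recognize the collapsed $\dcal$-patterns and to handle the boundary parameter values where one case degenerates into another.

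The main obstacle will be the delicate cases involving the $\dcal^m_{r,s}$ patterns (cases 11 and 12) and the ones containing a proper nonempty subcollection $\ncal$ of $H^m_{1,1}$ or $H^m_{1,2}$ (cases 7, 8, 9, 18), where the induced pattern's hole-structure must be tracked precisely: one has to verify that collapsing adjacent indices sends the stratified union $\bigcup_{h=1}^r H^{m+1}_{h,s-h+1}$ to $\bigcup_{h=1}^r H^{m}_{h,s-h}$ (or the appropriately shifted parameters), and that the $\ncal$-collections collapse to the full $H_{1,\cdot}$ in a controlled way. Here the phenomenon isolated in Lemma \ref{lem:H is in pcal}—that a single nonempty intersection with $EH^m_{h,l}$ forces all of $H^{m+1}_{h+1,l}$ to lie in $\pcal$—is what guarantees that the collapsed pattern has no ``partial'' hole-layers and is therefore genuinely standard rather than some intermediate collection. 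I would organize the proof as a case-by-case verification, treating the cardinality-bounded families first (cases 1--6, 13--17, which are immediate from Lemma \ref{lem:pcal_j,m+1 is contained} and the $\phi_m$, $\{\vec m\}$ computations) and then the hole-stratified families, where I expect the bulk of the care to be required and where I would lean heavily on Lemmas \ref{lem:H is in pcal} and \ref{lem:H^m+1_1_1_in_pcal} together with the two notes above.
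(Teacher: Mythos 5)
Your overall skeleton is the one the paper uses: verify, case by case, that every standard pattern collapses under each $\gamma\in\Pi\binom{m+1}{m}$ to one fixed standard pattern of lower order, then combine this with Lemma \ref{lem:m partitions are enough} and induction on $m$ to get stability. The gap lies in how you propose to justify the collapse computations. The three lemmas you lean on for the hard containments --- Lemma \ref{lem:pcal_j,m+1 is contained}, Lemma \ref{lem:H is in pcal} and Lemma \ref{lem:H^m+1_1_1_in_pcal} --- all carry the hypothesis ``let $\pcal$ be an $(m+1)$-stable $(m+1)_{1}$-pattern''. That hypothesis is precisely the conclusion of the theorem you are proving, so invoking them here is circular; in the paper these lemmas serve the converse direction (Theorem \ref{thm: standard are the only stable}), where stability of $\pcal$ is given and the task is to identify $\pcal$. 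Moreover, even granting stability, Lemma \ref{lem:pcal_j,m+1 is contained} does not assert what your transformation rule needs: its conclusion is $\Acal_{j,m+1}\subset\pcal$, a containment inside the top pattern, whereas you need $\Acal_{j,m}\subset(\Acal_{j,m+1})_\gamma$, a surjectivity statement about the collapse map. The same mismatch occurs for the hole collections: Lemma \ref{lem:H is in pcal} reconstructs a stable $\pcal$ from partial information about $\pcal_{\pi}$; it does not compute $(H_{h,l}^{m+1})_{\gamma}$.

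The missing ingredient is the elementary lifting device the paper actually uses, namely Lemma \ref{lem:D_j} and the maps $D_{k}$. For $Q\in\Acal_{j,m}$ and $\gamma=\gamma_{i,k}$ with $i\notin Q$, the set $D_{k}(Q)$ is the unique $p_{\gamma}$-preimage of $Q$ and satisfies $|D_{k}(Q)|=|Q|$, hence lies in $\Acal_{j,m+1}$ and witnesses $Q\in(\Acal_{j,m+1})_{\gamma}$; when $i\in Q$ one lists the three preimages and observes that $D_{k}(Q)$ is among them. Likewise, for $P\in H_{l,s-l}^{m-1}$ one has $D_{j}(P)\in H_{l,s-l+1}^{m}$ and $p_{\gamma}(D_{j}(P))=P$, which, together with the forward hole count $(H_{l,s-l+1}^{m})_{\gamma}\subset H_{l-1,s-l+1}^{m-1}\cup H_{l,s-l}^{m-1}$, is exactly how the paper proves $(\dcal_{r,s}^{m})_{\gamma}=\dcal_{r-1,s-1}^{m-1}$ with no appeal to stability. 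Replacing your three cited lemmas by these direct computations removes the circularity, and your plan then coincides with the paper's proof. (One smaller point: Lemma \ref{lem:m partitions are enough} requires the collapsed pattern to be the same for \emph{all} $\gamma\in\Pi\binom{m+1}{m}$, i.e.\ for every pair of merged indices, not only adjacent ones, so the verification cannot be restricted to adjacent pairs.)
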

\begin{enumerate}
\item $\{\vec{m}\}_{\gamma}=\{\vec{m-1}\}.$
\item $(\phi_{m})_{\gamma}=\phi_{m-1}.$
\item $(\Acal_{m,m})_{\gamma}=\Acal_{m-1,m-1}$.
\item $(\Acal_{m,m}^{1})_{\gamma}=\Acal_{m-1,m-1}^{1}$.
\item \label{ite:Acal}$(\Acal_{j,m})_{\gamma}=\Acal_{j,m-1}$ for $1\leq j\leq m-1$.
\item \label{ite:Acal1}$(\Acal_{j,m}^{1})_{\gamma}=\Acal_{j,m-1}^{1}$
for $1\leq j\leq m-1$.
\item \label{ite:long_I_list}$(\dcal_{r,s}^{m})_{\gamma}=\dcal_{r-1,s-1}^{m-1}$
for $2\leq r<r+1<s<m$.
\item $(\Acal_{m-2,m}^{1}\cup\ncal\cup\{\vec{m}\})_{\gamma}=\acal_{m-1,m-1}^{1}$
where $\ncal\subset H_{1,2}^{m}$.
\item $(\Acal_{m-2,m}\cup\ncal\cup\{\vec{m}\})_{\gamma}=\acal_{m-1,m-1}$
for $\ncal\subset H_{1,1}^{m}$.
\item $(\Acal_{m-2,m}\cup\ncal)_{\gamma}=\acal_{m-1,m-1}$ for $\ncal\subset H_{1,1}^{m}$
with $|\ncal|=m-1$.
\end{enumerate}

%%e ???
%%y What is not clear?
\begin{proof}
$ $
\begin{enumerate}
\item Trivial.
\item Trivial.
\item Trivial.
\item Trivial.
\item Fix $\alpha=\gamma_{i,k}$ for some $1\leq i<k\leq m$. Trivially
$(\Acal_{j,m})_{\alpha}\subset\Acal_{j,m-1}$. Let $Q\in\Acal_{j,m-1}$.
If $i\notin Q$, then by Lemma \ref{lem:D_j} $p_{\alpha}^{-1}(Q)=\{D_{k}(Q)\}$
and clearly $D_{k}(Q)\in\Acal_{j,m}$ as $|D_{k}(Q)|=|Q|$. If $i\in Q$,
then $p_{\alpha}^{-1}(Q)=\{D_{k}(Q),D_{k}(Q)\cup\{k\},D_{k}(Q)\cup\{k\}\setminus\{i\}\}$
and again it is enough to note that $D_{k}(Q)\in\Acal_{j,m}$.
\item Similar to
the proof of (\ref{ite:Acal}).
\item We start by proving that for all $\gamma\in\Pi{m+1 \choose m}$, $(\dcal_{r,s}^{m})_{\gamma}\subset\dcal_{r-1,s-1}^{m-1}$,
where: \[
(\dcal_{r,s}^{m})_{\gamma}=(\Acal_{m-r-1,m}^{1}\cup\bigcup_{l=1}^{r}H_{l,s-l+1}^{m}\cup\{\vec{m}\}){}_{\gamma}\]
 \[
\dcal_{r-1,s-1}^{m-1}=\Acal_{m-r-1,m-1}^{1}\cup\bigcup_{l=1}^{r-1}H_{l,s-l}^{m-1}\cup\{\vec{m-1\}}\]
This follows from $(\Acal_{m-r-1,m}^{1})_{\gamma}=\Acal_{m-r-1,m-1}^{1}$,
(as trivially $m-r-1<m$), $\{\vec{m\}}_{\gamma}=\{\vec{m-1\}}$ and
from $(H_{l,s-l+1}^{m})_{\gamma}\subset H_{l-1,s-l+1}^{m-1}\cup H_{l,s-l}^{m-1}$
for $l=1,\ldots,r$. To prove $(\dcal_{r,s}^{m})_{\gamma}\supset\dcal_{r-1,s-1}^{m-1}$,
fix $\gamma=\gamma_{i,j}$ and notice that for $P\in H_{l,s-l}^{m-1}$,
one has $p_{\gamma}(D_{j}(P))=P$ and $D_{j}(P)\in H_{l,s-l+1}^{m}$.
\item Trivial.
\item Trivial.
\item Notice that as $|\ncal|=m-1$, for all $\gamma=\gamma_{i,j}$, there
exist $P\in\ncal$ so that $i\notin P$ or $j\notin P$, which implies
that $\vec{m}\in(\ncal)_{\gamma}$. The rest of the proof is trivial.\end{enumerate}
\end{proof}
Our next goal is to show that, in fact, the standard patterns are
the only $m_{1}$-patterns which are $m$-stable
(Theorem \ref{thm: standard are the only stable}).
We begin by analyzing the $3$-patterns.

\begin{prop}
\label{thm:3 stable patterns}The $3$-stable $3_{1}$-patterns are
standard. \end{prop}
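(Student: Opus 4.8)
The plan is to reduce $3$-stability to a finite condition involving only three partitions, solve that condition explicitly, and then identify each solution in the standard list. A $3_1$-pattern $\Pcal$ is a nonempty subcollection of the seven nonempty subsets of $\vec{3}$. Since $\Pi\binom{3}{3}$ consists only of the identity partition $(\{1\},\{2\},\{3\})$, for which $\Pcal_\gamma=\Pcal$, the stability requirement is vacuous when $k'=3$; hence $\Pcal$ is $3$-stable precisely when the three partitions comprising $\Pi\binom{3}{2}$—namely $(\{1\},\{2,3\})$, $(\{1,2\},\{3\})$ and $(\{1,3\},\{2\})$—all induce the same $2_1$-pattern. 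First I would record, in a $7\times 3$ table, the value $P_\gamma\subseteq\vec{2}$ of each of the seven subsets $P$ under each of these three partitions, where $P_\gamma$ is simply the set of blocks of $\gamma$ meeting $P$.

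Next I would translate the equality of the three induced patterns into three membership requirements, one for each candidate element $\{1\}$, $\{2\}$, $\{1,2\}$ of the common image inside $Exp(\vec{2})$. Reading them off the table gives: (a) either both $\{2\},\{3\}\in\Pcal$ or none of $\{2\},\{3\},\{2,3\}$ lies in $\Pcal$; (b) $\{1\}\in\Pcal$ unless $\Pcal$ avoids all of $\{1\},\{2\},\{3\},\{1,2\},\{1,3\}$; and (c) (symmetric in the three two-element sets) either $\vec{3}\in\Pcal$ or the number of two-element subsets in $\Pcal$ is not exactly one.

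I would then solve this system by splitting on (a) and (b), which yields three regimes: (i) $\Pcal=\{\vec{3}\}$; (ii) $\{1\}\in\Pcal\subseteq\{\{1\},\{1,2\},\{1,3\},\vec{3}\}$ (every member containing $1$); and (iii) $\{1\},\{2\},\{3\}\in\Pcal$ with the three two-element sets and $\vec{3}$ otherwise free. Imposing (c) within each regime leaves exactly twenty admissible patterns, which I would then match against Definition~\ref{standard}. Regime~(i) is case~(1). The six patterns of regime~(ii) are the sets $\Acal_{j,3}^1$ of case~(6), the set $\Acal_{1,3}^1\cup\{\vec{3}\}$ of case~(12), and the two sets $\Acal_{1,3}^1\cup\ncal\cup\{\vec{3}\}$ of case~(7) (one of them being $\phi_3$). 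The thirteen patterns of regime~(iii), which all contain every singleton, are the sets $\Acal_{j,3}$ of case~(3), the set $\Acal_{1,3}\cup\{\vec{3}\}$ of case~(4), the sets $\Acal_{j,3}^1\cup\Acal_{1,3}$ of case~(14), and the sets $\Acal_{1,3}\cup\ncal$ and $\Acal_{1,3}\cup\ncal\cup\{\vec{3}\}$ of cases~(8) and~(9).

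I expect the delicate step to be this final matching rather than the solution of the Boolean system, which is entirely mechanical. The standard list of Definition~\ref{standard} is deliberately redundant at small $m$—several items carry parenthetical warnings that a boundary value of $j$ or of $\ncal$ collapses them onto an earlier item—so one must check that each of the twenty solutions is genuinely realised by a parameter choice respecting every side constraint (for instance $1\le j\le m-2$ in cases~(4) and~(12); $|\ncal|=m-1$ and $\ncal\ne H_{1,2}^m$ in case~(8); and $\emptyset\ne\ncal\subsetneq H_{1,1}^m$ with $\ncal\ne H_{1,2}^m$ in case~(9)), and, conversely, that no admissible solution has been overlooked.
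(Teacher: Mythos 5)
Your argument is correct --- I checked that your conditions (a)--(c) are precisely the requirement that the three partitions in $\Pi\binom{3}{2}$ induce the same $2_1$-pattern, that they admit exactly twenty solutions ($1+6+13$ over your three regimes), and that your matching of these against Definition \ref{standard} is accurate --- but it follows a genuinely different route from the paper's proof. The paper fixes $\pi=\gamma^3_{2,3}$ and splits into seven cases according to the value of the common induced pattern $\Pcal_\pi$: two cases ($\{\{2\}\}$ and $\{\{2\},\{1,2\}\}$) are killed by exhibiting a second partition that induces a different $2_1$-pattern; the case $\Pcal_\pi=\phi_2$ is settled by tabulating the seven candidates in $p_\pi^{-1}(\phi_2)$ and testing each; and the remaining cases ($\{\vec{2}\}$, $\Acal_{1,2}$, $\Acal_{2,2}$) are delegated to the corresponding steps in the proof of Theorem \ref{thm: standard are the only stable} --- which, although that theorem's induction rests on the present proposition as its base case, are valid already for $m\ge 2$, a non-circular but delicate forward reference. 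Your truth-table computation avoids all of this: it is self-contained, needs no inverse-image lemmas and no appeal to the later induction machinery, and yields the exact count of $3$-stable patterns as a by-product. What the paper's organization buys instead is reusability: casing on $\Pcal_\pi$ and analyzing $p_\pi^{-1}$ is exactly the template that the induction step runs for every $m\ge 3$, so the paper's proof of the base case doubles as a rehearsal of the general argument, whereas your computation is specific to $m=3$. Your closing caution is also well placed: the only real danger in your approach lies in the matching stage, since the standard list is redundant at $m=3$ (for instance $\{\{1\},\{1,2\},\{1,2,3\}\}$ is simultaneously $\phi_3$ and an instance of case (7)), and your handling of the side constraints there agrees with the identifications recorded in the paper's own table.
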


\begin{proof}
We enumerate all $3$-stable $3_{1}$-patterns. Denote $\alpha=\gamma_{1,2}^{3}$,
$\beta=\gamma_{1,3}^{3}$ and $\pi=\gamma_{2.3}^{3}$. Assume $\pcal$
is a $3$-stable $3_{1}$-pattern. Obviously $\pcal_{\pi}$ is one
of the
%%e
% $7$
seven
$2$-patterns. We analyze the different cases and show
$\pcal$ must be
%%e  ??
%%y What is not clear?
standard:
%stable:
\begin{enumerate}
\item $\pcal{}_{\pi}=\{\{1\}\}=\acal_{1,2}^{1}$. Notice $p_{\pi}^{-1}(\{1\})=\{\{1\}\}$
and conclude by Lemma \ref{lemma:trivial} $\pcal=\acal_{1,3}^{1}$.
\item $\pcal_{\pi}=\{\{2\}\}$. Notice $p_{\beta}^{-1}(\{2\})=\{\{2\}\}$. Conclude $\pcal=\{\{2\}\}$. However $(\{\{2\}\})_{\alpha}=\{\{1\}\}$.
Contradiction.
\item $\pcal_{\pi}=\{\{1,2\}\}=\{\vec{2}\}$. Using case (\ref{enu:vec(m)})
of Theorem \ref{thm: standard are the only stable} which holds true
for $m\geq2$, we conclude $\pcal=\{\vec{3}\}$
\item $\pcal_{\pi}=\{\{1\},\{2\}\}=\acal_{1,2}$. Using case (\ref{case:pcal=00003D00003D00003D00003D00003D00003DAcal_j_m}a.)
of Theorem \ref{thm: standard are the only stable} which holds true
for $m\geq2$, we conclude $\pcal=\acal_{1,3}$.
\item $\pcal_{\pi}=\{\{1\},\{1,2\}\}=\phi_{2}$. Notice $p_{\pi}^{-1}(\{1,2\})=\{\{1,2\},\{1,3\},\{1,2,3\}\}$.
We analyze all $\qcal\in p_{\pi}^{-1}(\phi_{2})$
in the following table:\\
 \\
 \begin{tabular}{|c|c|c|}
\hline
$\qcal$  & Stable  & Identification / Reason for not being stable\tabularnewline
\hline
\hline
$\{\{1\},\{1,2\}\}$  & No.  & $\qcal{}_{\alpha}=\{\{1\}\}\neq Q_{\pi}$\tabularnewline
\hline
$\{\{1\},\{1,3\}\}$  & No.  & $\qcal{}_{\beta}=\{\{1\}\}\neq Q_{\pi}$\tabularnewline
\hline
$\{\{1\},\{1,2,3\}\}$  & Yes.  & $\acal_{1,3}^{1}\cup\{\vec{3}\}$\tabularnewline
\hline
$\{\{1\},\{1,2\},\{1,3\}\}$  & Yes.  & $\Acal_{2,3}^{1}$\tabularnewline
\hline
$\{\{1\},\{1,2\}\{1,2,3\}\}$  & Yes.  & $\phi_{3}$\tabularnewline
\hline
$\{\{1\},\{1,3\}\{1,2,3\}\}$  & Yes.  & $\Acal_{1,3}^{1}\cup\ncal\cup\{\vec{3}\}$, where $\ncal\subset H_{1,2}^{3}$.\tabularnewline
\hline
$\{\{1\},\{1,2\},\{1,3\}\{1,2,3\}\}$  & Yes.  & $\acal_{3,3}^{1}$\tabularnewline
\hline
\end{tabular}\\
 \\

\item $\pcal_{\pi}=\{\{2\},\{1,2\}\}$. Notice $p_{\beta}^{-1}(\{2\})=\{\{2\}\}$.
Conclude $\{2\}\in\Pcal$. However $p_{\alpha}(\{2\})=\{1\}\notin\pcal_{\pi}$.
Contradiction.
\item $\pcal_{\pi}=\{\{1\},\{2\},\{1,2\}\}=\acal_{2,2}.$ Using case (\ref{enu:Acal_j_m}b.)
of Theorem \ref{thm: standard are the only stable} which holds true
for $m\geq2$, we conclude $\Pcal=\Acal_{1,3}\cup\ncal'$ for $\ncal'\subset H_{1,1}^{3}$
with $|\ncal'|=2$ or $\pcal=\Acal_{2,3}$ or $\Pcal=\Acal_{1,3}\cup\ncal\cup\{\vec{3}\}$,
for $\ncal\subset H_{1,1}^{3}$.
\end{enumerate}
\end{proof}

\begin{thm}
\label{thm: standard are the only stable} The standard patterns are
the only $m_{1}$-patterns which are $m$-stable.\end{thm}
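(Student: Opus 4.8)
The plan is to argue by induction on $m$, with the base cases $m=2$ (a direct check of the seven $2_1$-patterns) and $m=3$ (Proposition \ref{thm:3 stable patterns}) already in hand. Assume the statement for $m$ and let $\pcal$ be an $(m+1)$-stable $(m+1)_1$-pattern. First I would observe that stability descends one level: for any $\gamma\in\Pi\binom{m+1}{m}$ the induced pattern $\pcal_\gamma$ is $m$-stable, since for every $\beta$ one has $(\pcal_\gamma)_\beta=\pcal_{\gamma_\beta}$ and these amalgamated inductions are constant by the $(m+1)$-stability of $\pcal$; the same stability forces $\pcal_\gamma$ to be independent of $\gamma$. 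Writing $\qcal\teq\pcal_\gamma$, the inductive hypothesis gives that $\qcal$ is a standard $m_1$-pattern.

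The heart of the proof is to reconstruct $\pcal$ from $\qcal$. Fixing $\pi=\gamma_{m,m+1}^{m+1}$, Lemma \ref{lemma:trivial} gives $\pcal=\bigcup_{Q\in\qcal}\bigl(p_\pi^{-1}(Q)\cap\pcal\bigr)$, so it suffices to decide, for each $Q\in\qcal$, which elements of the fiber $p_\pi^{-1}(Q)$ lie in $\pcal$. The structure recorded in the Notation subsection is decisive here: if $m\notin Q$ then $p_\pi^{-1}(Q)=\{Q\}$ and Lemma \ref{lemma:p^-1=00003D00003D00003D00003D00003D00003D00003D1} places $Q\in\pcal$ automatically, while if $m\in Q$ the fiber is $\{Q,\hat{Q},Q_{m+1}\}$ and each candidate must be tested. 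The test is always the same: apply a suitable $p_{\gamma_{i,j}}$, compare the image against $\qcal$, and use Lemma \ref{lem:D_j} to identify the resulting singleton fibers. This is exactly the mechanism illustrated in case (5) of Proposition \ref{thm:3 stable patterns}, and Theorem \ref{thm:Standard_Examples_are_Stable} tells us in advance, for each standard type, what the admissible $\pcal$ must restrict to.

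The remaining and most laborious step is to run this fiber analysis through the entire list of Definition \ref{standard} and check that in every case the resulting $\pcal$ is again standard. I would organize the cases by the shape of $\qcal$. The purely cardinality-bounded families $\Acal_{j,m}$ and $\Acal_{j,m}^1$ are handled cleanly by Lemma \ref{lem:pcal_j,m+1 is contained}, which shows the admissible lifts are $\Acal_{j,m+1}$ and $\Acal_{j,m+1}^1$; the separate presence or absence of $\vec{m}$ and of the chain $\phi_m$ is tracked as in the $m=3$ table. The genuinely delicate cases are those where $\qcal$ carries a nontrivial layer of \emph{holes}, namely the families $\dcal_{r,s}^m$ and the collections $\Acal^1_{m-2,m}\cup\ncal\cup\{\vec m\}$: here several elements of a single fiber may simultaneously lie in $\pcal$, so membership cannot be read off one set at a time. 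For these I would invoke Lemma \ref{lem:H is in pcal} and Lemma \ref{lem:H^m+1_1_1_in_pcal}, which compute exactly how an $EH_{h,l}^m$ witness in $\pcal_\pi$ forces an entire block $H_{h+1,l}^{m+1}\subset\pcal$ and pin down the single index $e+1$ at which the top hole-layer begins. Matching these forced blocks against the definition of $\dcal^{m+1}_{r,s}$, using the index shifts recorded in part (\ref{ite:long_I_list}) of Theorem \ref{thm:Standard_Examples_are_Stable}, then exhibits $\pcal$ as the appropriate standard pattern.

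I expect the main obstacle to be precisely this last bookkeeping for the $\dcal$-type and mixed $H$-type patterns: ensuring that the hole-layers forced into $\pcal$ neither overshoot nor undershoot the standard template, and that the boundary conventions (the degenerate values $s=m$, $s=r+1$, $r=1$, and the coincidences noted immediately after Definition \ref{standard}) are respected so that no spurious non-standard lift can survive. Everything outside these cases reduces to the singleton-fiber argument together with the two containment lemmas.
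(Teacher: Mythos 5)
Your proposal follows essentially the same route as the paper's proof: induction on $m$ with the $m=3$ base case, passing to $\qcal=\pcal_{\pi}$ for $\pi=\gamma_{m,m+1}^{m+1}$ (standard by the inductive hypothesis), and then deciding membership fiber-by-fiber in $p_{\pi}^{-1}(Q)=\{Q,\hat{Q},Q_{m+1}\}$ using exactly the paper's tools (Lemmas \ref{lem:D_j}, \ref{lemma:trivial}, \ref{lemma:p^-1=00003D00003D00003D00003D00003D00003D00003D1}, \ref{lem:pcal_j,m+1 is contained}, \ref{lem:H is in pcal} and \ref{lem:H^m+1_1_1_in_pcal}), with the hole-layer patterns $\dcal_{r,s}^{m}$ and $\Acal_{m-2,m}^{1}\cup\ncal\cup\{\vec{m}\}$ singled out as the delicate cases. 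The paper's proof is precisely this plan executed case by case over the standard list, so what separates your outline from the complete argument is only the bookkeeping you yourself flag as remaining.
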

\begin{proof}
By Theorem \ref{thm:Standard_Examples_are_Stable} the standard patterns
are $m$-stable. We prove by induction on $m$ that the standard patterns
are the only $m_{1}$-patterns which are $m$-stable. The case $m=3$
is proven in Theorem \ref{thm:3 stable patterns}. Assume the theorem
is true for $m\geq3$, we prove it for $m+1$. Let $\pi=\gamma_{m,m+1}^{m+1}$.
Let $\Pcal$ be an
$(m+1)$-stable $(m+1)_{1}$-pattern. By the
induction assumption $\Pcal_{\pi}$ must be standard. We analyze the
different cases in order to prove $\Pcal$ is standard. Note that
if $P\in\Pcal_{\pi}$ and $m\notin P$, then $p_{\pi}^{-1}(P)=\{P\}$
and therefore we will be mainly analyzing $P\in\Pcal_{\pi}$ with
$m\in P$.
\begin{enumerate}
\item \label{enu:vec(m)}$\Pcal_{\pi}=\{\vec{m}\}$. Recall $p_{\pi}^{-1}(\vec{m})=\{\vec{m+1},\vec{m},\hat{m}\}$.
We claim $\vec{m}\notin\Pcal.$ Indeed let $\alpha=\gamma_{1,m}^{m+1}$
and observe that $\vec{m}_{\alpha}=\vec{m-1}\notin\Pcal_{\pi}$ (recall
$m\geq2$). Similarly let $\beta=\gamma_{1,m+1}^{m+1}$ and observe
that $\hat{m}_{\alpha}=\vec{m-1}\notin\Pcal$ (recall $m\geq2$).
We conclude $\Pcal=\{\vec{m+1}\}$ using Lemma \ref{lemma:trivial}.
\item \label{case:pcal=00003D00003D00003D00003D00003D00003Dphi_m}$\Pcal_{\pi}=\phi_{m}$.
Recall $p_{\pi}^{-1}(\vec{m})=\{\vec{m+1},\vec{m},\hat{m}\}$. We
claim $\hat{m}\notin\Pcal.$ Indeed let $\alpha=\gamma_{1,2}^{m+1}$
and observe $\hat{m}_{\alpha}=\{1,2,\ldots,m-2,m\}\notin\Pcal_{\pi}$(recall
$m-2\geq1$). We claim $\vec{m}\notin\Pcal$. Indeed in that case
$\vec{m-1}\notin\Pcal_{\alpha}.$ Similarly we claim $\vec{m+1}\notin\Pcal$
cannot hold as in that case $\vec{m}\notin\Pcal_{\alpha}.$ This implies
$\vec{m}, \vec{m+1}\in\Pcal$. Notice $p_{\pi}^{-1}(\vec{j})=\{\vec{j}\}$
for $j\leq m-1$. We conclude $\Pcal=\phi_{m+1}$ using Lemma \ref{lemma:trivial}.
\item \label{case:pcal=00003D00003D00003D00003D00003D00003DAcal_j_m} We
will divide $\Pcal_{\pi}=\Acal_{j,m}$ into two cases: \\
 a. $\Pcal_{\pi}=\Acal_{j,m}$ for $j<m$. By Lemma \ref{lem:pcal_j,m+1 is contained}
$\Acal_{j,m+1}\subset\pcal$. We only need to consider $P\in\Acal_{j,m}$
$|P|=j$ and $m\in P$, for which $p_{\pi}^{-1}(P)=\{P,\hat{P},P_{m+1}\}$,
and show that $P_{m+1}\notin\Pcal$. Indeed select $k\notin P$ with
$k<m$. Let $\alpha=\gamma_{k,k+1}^{m+1}$, then $|(P_{m+1})_{\alpha}|=j+1$
which implies $(P_{m+1})_{\alpha}\notin\Pcal_{\pi}.$ Finally we conclude
$\Pcal=\Acal_{j,m+1}$ using Lemma \ref{lemma:trivial}. \\
 b. $\Pcal_{\pi}=\Acal_{m,m}$. By Lemma \ref{lem:pcal_j,m+1 is contained}
$\Acal_{m-1,m+1}\subset\Pcal$. We therefore need to determine which
elements of $H_{11}^{m+1}\cup\{\vec{m+1}\}$ belong to $\Pcal$. Recall
from article (\ref{ite:Acal}) of Theorem \ref{thm:Standard_Examples_are_Stable}
that $(\Acal_{m-1,m+1})_{\gamma}=\Acal_{m-1,m}$ for all $\gamma\in\Pi{m+1 \choose m}$,
and notice that for any $P\in H_{1,1}^{m+1}\cup\{\vec{m+1}\},$ $P_{\gamma}=\vec{m}$.
First assume $\vec{m+1}\in\pcal.$ Conclude $\Pcal=\Acal_{m-1,m+1}\cup\ncal\cup\{\vec{m+1}\}$,
for $\ncal\subset H_{1,1}^{m+1}$, or $\Pcal=\Acal_{m-1,m+1}\cup\ncal'$
for $\ncal'\subset H_{1,1}^{m}.$ If $|\ncal'|=m+1$, $\pcal=\Acal_{m,m+1}.$
If $|\ncal'|<m$, then there exists $i,j\in\vec{m+1}$, $i\neq j$
so that for all $P\in\ncal'$, $i,j\in P$. This implies $\vec{m}\notin(\Acal_{m-1,m+1}\cup\ncal')_{\gamma_{i,j}}$.
Conclude $\Pcal=\Acal_{m-1,m+1}\cup\ncal'$ for $\ncal'\subset H_{1,1}^{m}$
with $|\ncal'|=m$ or $\pcal=\Acal_{m,m+1}$ or $\Pcal=\Acal_{m-1,m+1}\cup\ncal\cup\{\vec{m+1}\}$,
for $\ncal\subset H_{1,1}^{m+1}$.
\item \label{case:Acal_j_m_cup_vec(m)}$\Pcal_{\pi}=\Acal_{j,m}\cup\{\vec{m}\}$
for $1\le j\le m-2$. $p_{\pi}^{-1}(\vec{m})=\{\vec{m+1},\vec{m},\hat{m}\}$.
We claim $\vec{m}\notin\Pcal.$ Indeed let $\alpha=\gamma_{1,m}$
and observe that $\vec{m}_{\alpha}=\vec{m-1}\notin\Pcal_{\pi}$. Similarly
let $\beta=\gamma_{1,m+1}$ and observe that $\hat{m}_{\alpha}=\vec{m-1}\notin\Pcal$.
We now continue as in case (\ref{case:pcal=00003D00003D00003D00003D00003D00003DAcal_j_m}a).
Conclude $\Pcal=\Acal_{j,m+1}\cup\{\vec{m+1}\}$.
\item \label{case:Acal_j_m_phi_m}$\Pcal_{\pi}=\Acal_{j,m}\cup\phi_{m}$
for $1\le j\le m-2$. We start by analyzing $p_{\pi}^{-1}(P)$ for
$P\in\Acal_{j,m}$. This is done as in case (\ref{case:pcal=00003D00003D00003D00003D00003D00003DAcal_j_m})
with the sole difference that for the case $P\in\Acal_{j,m},\,|P|=j,\, m\in P$
we choose $1\leq i<k<m$ so that $i,k\notin P$ and notice that for
$\alpha=\gamma_{i,k}$, we have $|(P_{m+1})_{\alpha}|=j+1$ and in
addition $i\notin(P_{m+1})_{\alpha}$ and $m\in(P_{m+1})_{\alpha}$
which implies $(P_{m+1})_{\alpha}\notin\Pcal_{\pi}$. Let now $P\in\phi_{m}$,
so that $j<|P|<m-1$. Notice $p_{\pi}^{-1}(A)=\{A\}$. For $P=\vec{m}$,
we continue as in case (\ref{case:pcal=00003D00003D00003D00003D00003D00003Dphi_m}).
Conclude $\Pcal=\Acal_{j,m+1}\cup\phi_{m+1}$.
\item \label{case:Acal_j_m^1}We will divide $\Pcal_{\pi}=\Acal_{j,m}^{1}$
into two cases:\\
 a. $\Pcal_{\pi}=\Acal_{j,m}^{1}$ for $j<m$. Similar to the
proof of article (\ref{case:pcal=00003D00003D00003D00003D00003D00003DAcal_j_m}a).
Conclude $\Pcal=\Acal_{j,m+1}^{1}$. \\
 b. $\Pcal_{\pi}=\Acal_{m,m}^{1}.$ Similar to the proof of article
(\ref{case:pcal=00003D00003D00003D00003D00003D00003DAcal_j_m}b).
Notice that $\Pcal=\Acal_{m-1,m+1}^{1}\cup\ncal'$ for $\emptyset\neq\ncal'\subsetneq H_{1,2}^{m+1}$
is ruled out because in such a case $|\ncal'|<m$. Conclude $\Pcal=\Acal_{m-1,m+1}^{1}\cup\ncal\cup\{\vec{m+1}\}$,
for $\ncal\subset H_{1,2}^{m+1}$ or $\Pcal=\Acal_{m,m+1}^{1}$.
\item \label{case:Acal^1_ncal_vec(m)}$\Pcal_{\pi}=\Acal_{m-2,m}^{1}\cup\ncal\cup\{\vec{m}\}$
where $\emptyset\neq\ncal\subsetneq H_{1,2}^{m}$. Let $e=min_{Q\in\ncal}e(Q)$.
A similar argument to case (\ref{enu:Acal_m-2,m_cup_ncal_cup_vec(m)})
yields $e\geq3$, $\ncal=H_{1,e}^{m+1}$ and $\pcal=\Acal_{m-2,m+1}^{1}\cup H_{2,e}^{m+1}\cup H_{1,e+1}^{m+1}\cup\{\vec{m+1\}}=\dcal_{2,e+1}^{m+1}.$
%
\begin{comment}
\begin{enumerate}
\item c. $e=1$. As in case a. we have $\{I_{k_{1},k_{2}}^{m+1}\}_{1\leq k_{1}<k_{2}\leq m+1}\subset\Pcal$
and $\ncal=\{P\in2^{\vec{m}}|\,|P|=m-1\}$. We now show there cannot
exist $\Pcal\subset\Acal_{m,m}$ so that $\Pcal_{\pi}=\Acal_{m,m}^{1}\cup\{I_{1}^{m}\}$.
Indeed using $m\geq3$, notice $p_{\gamma_{23}}(I_{1,4}^{m+1})=I_{1,3}^{m}\notin\Pcal_{\pi}$.
\end{enumerate}

\end{comment}
{}
\item \label{enu:acal_cup_ncal}$\Pcal_{\pi}=\Acal_{m-2,m}\cup\ncal$ for
$\ncal\subset H_{1,1}^{m}$, $\ncal\neq H_{1,2}^{m}$ with $|\ncal|=m-1$.
This implies $\ncal=H_{1,1}^{m}\setminus EH_{j,1}^{m}$ for some $2\leq j\leq m$,
so this case corresponds to Lemma \ref{lem:H is in pcal} $h=1,l=1$
and we conclude $\Pcal_{\pi}\cap H_{1,1}^{m}=H_{1,1}^{m}$ which is
a contradiction.

\begin{enumerate}
\item \label{enu:acal_ncal_vec(m)}$\Pcal_{\pi}=\Acal_{m-2,m}\cup\ncal\cup\{\vec{m}\}$
for $\emptyset\neq\ncal\subsetneq H_{1,1}^{m}$, $\ncal\neq H_{1,2}^{m}$.
Let $e=min_{Q\in\ncal}e(Q)$. By Lemma \ref{lem:H is in pcal} $\ncal=H_{1,e}^{m}$
and $H_{2,e}^{m+1}=\Pcal\cap H_{2,1}^{m+1}$. By assumption $\ncal\neq H_{1,1}^{m},\, H_{1,2}^{m}$
and therefore $e>2$. For $e\geq3$, we use Lemma \ref{lem:H^m+1_1_1_in_pcal}
to conclude $H_{1,e+1}^{m+1}=\Pcal\cap H_{1,1}^{m+1}$ and $\vec{m+1\in\pcal.}$
By Lemma \ref{lem:pcal_j,m+1 is contained} $\Acal_{m-2,m+1}\subset\pcal$.
Finally conclude $\pcal=\Acal_{m-2,m+1}\cup H_{2,e}^{m+1}\cup H_{1,e+1}^{m+1}\cup\{\vec{m+1\}}=\dcal_{2,e+1}^{m+1}\cup\acal_{m-2,m+1}.$
\end{enumerate}
\item $\Pcal_{\pi}=\dcal_{r,s}^{m}$, for $2\leq r<r+1<s<m$. Recall $\dcal_{r,s}^{m}=\Acal_{m-r-1,m}^{1}\cup\bigcup_{l'=1}^{r}H_{l',s-l'+1}^{m}\cup\{\vec{m}\}$.
By Lemma \ref{lem:pcal_j,m+1 is contained}, $\Acal_{m-r-1,m}^{1}\subset\Pcal$.
Apply Lemma \ref{lem:H is in pcal} $r$ times w.r.t. pairs $h=l'$
and $l=s-l'+1$ to conclude $H_{l'+1,s-l'+1}^{m+1}\cap\Pcal=H_{1,s-l'+1}^{m+1}$.
Finally conclude $\pcal=\dcal_{r+1,s+1}^{m+1}$. %
{}
\item $\pcal_{\pi}=\dcal_{r,s}^{m}\cup\Acal_{j,m}$ for $2\leq r<r+1<s<m$
and $1\leq j\leq m-r-1$. $P\in\acal_{j,m}$ is analyzed as in case
(\ref{enu:Acal^1_j_m_cup_Acal_j'}). $P\in\dcal_{r,s}^{m}$ is analyzed
as in case (\ref{enu:dcal^m_s_r}). Conclude $\pcal=\dcal_{r+1,s+1}^{m+1}\cup\Acal_{j,m+1}$.
\item $\Pcal_{\pi}=\Acal_{j,m}^{1}\cup\{\vec{m}\}$ for $1\le j\le m-2$.
Similar to case (\ref{case:Acal_j_m_cup_vec(m)}). Conclude $\Pcal=\Acal_{j,m+1}^{1}\cup\{\vec{m+1}\}$.
\item \label{case:Pcal=00003D00003D00003D00003D00003D00003D00003DAcaljm1_phi_m}$\Pcal_{\pi}=\Acal_{j,m}^{1}\cup\phi_{m}$
for $1\le j\le m-2$. Similar to case (\ref{case:Acal_j_m_phi_m}).
Conclude $\Pcal=\Acal_{j,m+1}^{1}\cup\phi_{m+1}$. %
\begin{comment}
\begin{enumerate}
\item $\Pcal_{\pi}=\Acal_{j,m}^{1}\cup\{\{j\}:j\in\vec{m}\}\cup\phi_{m}$
for $1\le j\le m-2$. We analyze $P\in\Acal_{j,m}^{1}\cup\{\{j\}:j\in\vec{m}\}\cup\phi_{m}$
as in case (\ref{case:Pcal=00003D00003D00003D00003D00003D00003D00003DAcaljm1_phi_m}).
For $P=\{j\},\, j<m$, notice $p_{\pi}^{-1}(P)=\{P\}$. For $P=\{m\}$,
notice $p_{\pi}^{-1}(\{m\})=\{\{m+1\},\{m\},\{m,m+1\}\}$. Let $\alpha=\gamma_{1,2}$.
We claim $\{m,m+1\}\notin\Pcal.$ Indeed observe $\{m,m+1\}_{\alpha}=\{m-1,m\}\notin\Pcal_{\pi}$
(as $m-1\geq2$). We claim $\{m\}\notin\Pcal$ and $\{m+1\}\in\Pcal$
cannot hold. Indeed in that case $\{m-1\}\notin\Pcal_{\alpha}.$ Similarly
we claim $\{m\}\in\Pcal$ and $\{m+1\}\notin\Pcal$ cannot hold as
$\{m\}\notin\Pcal_{\alpha}.$ Conclude $\Pcal=\Acal_{j,m+1}^{1}\cup\{\{j\}:j\in\vec{m+1}\}\cup\phi_{m+1}$.
\end{enumerate}

\end{comment}
{}
\item \label{enu:acal^1_cup_acal}$\Pcal_{\pi}=\Acal_{j,m}^{1}\cup\Acal_{j',m}$
for $2\le j\le m$ and $j'<j$. By Lemma \ref{lem:pcal_j,m+1 is contained}
$\Acal_{j,m+1}^{1}\cup\Acal_{j',m+1}\subset\pcal$. We treat two cases:
\\
a. $j<m$. Similar to case (\ref{case:Acal_1jm_Acal_j'm_vec(m)}).
Conclude $\pcal=\Acal_{j,m+1}^{1}\cup\Acal_{j',m+1}$. \\
b. $j=m$.
%%ee
Similar to case (\ref{case:Acal_j_m^1}b). Conclude
$\Pcal=\Acal_{j',m+1}\cup\Acal_{m-1,m+1}^{1}\cup\ncal\cup\{\vec{m+1}\}$
with $\ncal\subset H_{1,2}^{m+1}$ or $\Pcal=\Acal_{j',m+1}\cup\Acal_{m,m+1}^{1}$.
\item \label{case:Acal_1jm_Acal_j'm_vec(m)}$\Pcal_{\pi}=\Acal_{j,m}^{1}\cup\Acal_{j',m}\cup\{\vec{m}\}$
for $2\le j\le m-2$ and $j'<j$. $P\in\Acal_{j,m}^{1}\cup\{\vec{m}\}$
is analyzed as in case (\ref{case:Acal_j_m_cup_vec(m)}). For $P\in\Acal_{j',m}$,
we only need to deal with $P\in\Acal_{j',m}^{m}$, $1\notin P$, $|P|=j'<m-2$.
This is done as in case (\ref{case:Acal_j_m_phi_m}). Conclude $\Pcal=\Acal_{j,m+1}^{1}\cup\Acal_{j',m+1}\cup\{\vec{m+1}\}$.
\item $\Pcal_{\pi}=\Acal_{j,m}^{1}\cup\Acal_{j',m}\cup\phi_{m}$ for $2\le j\le m-2$
and $j'<j$ . $P\in\Acal_{j,m}^{1}\cup\phi_{m}$ is analyzed as in
case (\ref{case:Acal_j_m_cup_vec(m)}). $P\in\Acal_{j',m}$ is analyzed
as in case (\ref{case:Acal_1jm_Acal_j'm_vec(m)}). Conclude $\Pcal=\Acal_{j,m+1}^{1}\cup\Acal_{j',m+1}\cup\phi_{m+1}$.
\item \label{enu:acal_cup_acal^1_cup_ncal_cup_vec(m)}$\pcal_{\pi}=\Acal_{j,m}\cup\Acal_{m-2,m}^{1}\cup\ncal\cup\{\vec{m}\}$
where $\emptyset\neq\ncal\subsetneq H_{1,2}^{m}$ and $1\le j<m-2$.
$P\in\Acal_{j,m}$ is analyzed as in case (\ref{case:Acal_1jm_Acal_j'm_vec(m)}).
$\Acal_{m-2,m}^{1}\cup\ncal\cup\{\vec{m}\}$ is analyzed as in case
(\ref{case:Acal^1_ncal_vec(m)}). Conclude $\pcal=\Acal_{j,m+1}\cup\Acal_{m-2,m+1}^{1}\cup H_{2,e}^{m+1}\cup H_{1,e+1}^{m+1}\cup\{\vec{m+1\}}=\Acal_{j,m}\cup\dcal_{2,e+1}^{m+1}$
for some $e\geq3$.
\end{enumerate}
\end{proof}

\subsection{Hereditary patterns}
\begin{defn}\label{def:hereditary patterns}
An $m$-stable $m$-pattern $\Pcal$ is said to be \textbf{hereditary}
if for every $m'>m$ there exist an $m'$-stable $m'$-pattern $\qcal$
so that for any $\gamma\in\Pi\binom{m'}{m}$ it holds that $\pcal=\qcal_{\gamma}$.

\noindent Denote $HSP_{n}(m)=\{\Pcal : \,\Pcal\textrm{ is a hereditary \ensuremath{m}-stable \ensuremath{m_{n}}-pattern}\}$.%
\begin{comment}
\noindent and $HSP_{n}=\bigcup_{m}HSP_{n}(m)$.
\end{comment}
{}\end{defn}

\begin{thm}
\label{thm:the only hereditary}The following are the only hereditary
$m$-stable $m_{1}$-patterns for $m\geq3$:
\begin{enumerate}
\item $\{\vec{m}\}$ .
\item $\phi_{m}$.
\item For every $1\le j\le m$ the collection $\Acal_{j,m}$.
\item For every $1\le j\le m-2$ the collection $\Acal_{j,m}\cup\{\vec{m}\}$.
\item For every $1\le j\le m-2$ the collection $\Acal_{j,m}\cup\phi_{m}$.
\item For every $1\le j\le m$ the collection $\Acal_{j,m}^{1}$.
\item \label{enu:D1}For every $1\leq r<r+1<s<m$\textup{ the collection
$\dcal_{r,s}^{m}$}.
\item \label{enu:D2}For every $1\leq r<r+1<s<m$\textup{ }and $1\leq j\leq m-r-1$,
$\dcal_{r,s}^{m}\cup\Acal_{j,m}$.
\item For every $1\le j\le m-2$ the collection $\Acal_{j,m}^{1}\cup\{\vec{m}\}.$
\item For every $1\le j\le m-2$ the collection $\Acal_{j,m}^{1}\cup\phi_{m}$.
\item For every $2\le j\le m$ and $j'<j$ the collection $\Acal_{j,m}^{1}\cup
%%e
\Acal_{j',m}$.
\item For every $2\le j\le m-2$ and $j'<j$ the collection $\Acal_{j,m}^{1}\cup\Acal_{j',m}\cup\{\vec{m}\}$.
\item For every $2\le j\le m-2$ and $j'<j$ the collection $\Acal_{j,m}^{1}\cup\Acal_{j',m}\cup\phi_{m}$.
\end{enumerate}
\end{thm}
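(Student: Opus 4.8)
Every hereditary pattern is in particular $m$-stable, so by Theorem \ref{thm: standard are the only stable} it must be one of the standard patterns of Definition \ref{standard}. The task therefore reduces to deciding which of the standard patterns are hereditary, and the plan is to establish the two inclusions separately.

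For the direction that every listed pattern is hereditary, the idea is to show that the list is closed under a single lift and then to iterate. Given $\Pcal$ on the list at level $m$, I read off from the restriction formulas of Theorem \ref{thm:Standard_Examples_are_Stable} a standard pattern $\qcal$ at level $m+1$, again on the list, with $\qcal_\gamma=\Pcal$ for every $\gamma\in\Pi\binom{m+1}{m}$: for instance $\{\vec{m}\}$ lifts from $\{\vec{m+1}\}$, $\phi_m$ from $\phi_{m+1}$, while $\Acal_{j,m}$ and $\Acal_{j,m}^1$ lift from $\Acal_{j,m+1}$ and $\Acal_{j,m+1}^1$, and $\dcal_{r,s}^m$ lifts from $\dcal_{r+1,s+1}^{m+1}$ (whose defining inequalities $1\le r+1<r+2<s+1<m+1$ follow from those of $\dcal_{r,s}^m$); the various unions lift correspondingly. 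Iterating produces, for each $m'>m$, a standard and hence $m'$-stable pattern $\qcal$ one of whose $\Pi\binom{m'}{m}$-restrictions equals $\Pcal$. Since $m'$-stability makes $\qcal_\gamma$ independent of $\gamma\in\Pi\binom{m'}{m}$, and the amalgamation identity $(\qcal_\gamma)_\beta=\qcal_{\gamma_\beta}$ lets me compose the one-step restrictions, I conclude $\qcal_\gamma=\Pcal$ for all such $\gamma$, so $\Pcal$ is hereditary.

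For the converse I must show that the standard patterns absent from the list are not hereditary. These are precisely the instances of cases (7), (8), (9) and (17) of Definition \ref{standard} whose hole part $\ncal=\Pcal\cap H_{1,1}^m$ is not an initial segment $H_{1,l}^m$. Indeed, a direct check shows that in each of these cases the members of $\Pcal$ lying in $H_{1,1}^m$ are exactly those of $\ncal$, and that when $\ncal=H_{1,l}^m$ the pattern coincides with $\dcal_{1,l}^m$ or $\dcal_{1,l}^m\cup\Acal_{j,m}$, already on the list; in case (8) the constraint $|\ncal|=m-1$ forces $l=2$, which is excluded, so no case-(8) pattern gives an initial segment. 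Now suppose such a $\Pcal$ were hereditary. Taking $m'=m+1$ furnishes an $(m+1)$-stable pattern $\qcal$ with $\qcal_\gamma=\Pcal$ for every $\gamma$, in particular for $\pi=\gamma_{m,m+1}^{m+1}$. Writing $l=\min\{e(Q):Q\in\ncal\}$, one has $\qcal_\pi\cap EH_{1,i}^m=\emptyset$ for $i<l$ and $\qcal_\pi\cap EH_{1,l}^m\ne\emptyset$, so Lemma \ref{lem:H is in pcal} with $h=1$ forces $\qcal_\pi\cap H_{1,1}^m=H_{1,l}^m$, i.e.\ $\ncal=H_{1,l}^m$, contradicting that $\ncal$ is not an initial segment. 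Hence $\Pcal$ is not hereditary.

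The routine part is the bookkeeping of the lifts in the first direction together with the case-matching against Definition \ref{standard}; the conceptual core is the observation that Lemma \ref{lem:H is in pcal} is exactly the assertion that the hole part of any restriction of a stable pattern must be an initial segment $H_{1,l}^m$, which is what obstructs heredity for the irregular hole patterns. I expect the main obstacle to lie in organizing the matching of cases so that every standard pattern is correctly classified as either on the list, via a $\dcal$-type identification, or ruled out by the lemma.
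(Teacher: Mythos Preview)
Your argument is correct and is essentially the paper's own proof made explicit: the paper simply says ``this follows from the proof of Theorem~\ref{thm: standard are the only stable}'', and what you have done is unpack that reference by (i) reading the one-step lifts off the restriction formulas to get heredity of the listed patterns, and (ii) isolating Lemma~\ref{lem:H is in pcal} as the mechanism that, in cases (7), (8), (9), (17) of Definition~\ref{standard}, forces $\ncal=H^m_{1,l}$ for any pattern admitting a stable lift---exactly the step the paper performs inside the case analysis of Theorem~\ref{thm: standard are the only stable}. Your identification of the $\ncal=H^m_{1,l}$ instances with $\dcal^m_{1,l}$, $\dcal^m_{1,l}\cup\Acal_{j,m}$, or the $\phi_m$-variants on the list is the bookkeeping the paper leaves implicit, and it is correct.
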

\begin{proof}
This follows from the proof of Theorem \ref{thm: standard are the only stable}. \end{proof}

%%e
Note that this list is the list of standard patterns (Definition \ref{standard})
with the items (7),(8),(9), and (17) removed (notice however that in some cases the allowed indices slightly differ).

% changed the order of definition and lemma

%%e
\begin{lem}
\label{lem: second representation of D}
Let $2\leq r<r+1<s<m$, then
$$
\textup{$\dcal_{r,s}^{m}=\{\{\vec{l}\cup\{j_{1},\ldots,j_{m-s}\}\}_{1\leq l\leq m,\,1\leq j_{1}\leq\ldots\leq j_{m-s}}\cup\Acal_{m-r-1,m}^{1}$.}
$$
\end{lem}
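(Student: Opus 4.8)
The plan is to prove the asserted set equality by a direct double inclusion, organizing everything according to the \textbf{holes} of a subset $A\subseteq\vec{m}$ (recall that if $A=I_{d_{1},\ldots,d_{h}}^{m}$ has $h$ holes then $|A|=m-h$, and that $\dcal_{r,s}^{m}=\Acal_{m-r-1,m}^{1}\cup\bigcup_{h=1}^{r}H_{h,s-h+1}^{m}\cup\{\vec{m}\}$). Write $\mathcal{B}$ for the first family on the right-hand side, i.e. the collection of all sets $\vec{l}\cup\{j_{1},\ldots,j_{m-s}\}$ with $1\le l\le m$. Since repetitions among the $j_{i}$, or values $j_{i}\le l$, only shrink the adjoined set, we may assume the $j_{i}$ are distinct; the only datum that matters is the resulting subset of $\vec{m}$.

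First I would establish $\mathcal{B}\cup\Acal_{m-r-1,m}^{1}\subseteq\dcal_{r,s}^{m}$. The inclusion $\Acal_{m-r-1,m}^{1}\subseteq\dcal_{r,s}^{m}$ is immediate from the definition. For $A=\vec{l}\cup\{j_{1},\ldots,j_{m-s}\}\in\mathcal{B}$ the key point is that, since $\vec{l}\subseteq A$, every hole of $A$ is $>l$, so the smallest hole is $\ge l+1$. Writing $t$ for the number of $j_{i}$ exceeding $l$, one has $|A|=l+t$ and hole-count $h=m-l-t$. As at most $m-s$ of the $j_{i}$ can exceed $l$ we get $t\le m-s$, whence $l+h=m-t\ge s$, i.e. $l+1\ge s-h+1$; thus every hole of $A$ is $\ge s-h+1$. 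Consequently, if $h\le r$ then $A\in H_{h,s-h+1}^{m}$ (the case $h=0$ corresponding to $A=\vec{m}$), while if $h>r$ then $|A|=m-h\le m-r-1$ and $1\in A$, so $A\in\Acal_{m-r-1,m}^{1}$. In either case $A\in\dcal_{r,s}^{m}$.

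For the reverse inclusion $\dcal_{r,s}^{m}\subseteq\mathcal{B}\cup\Acal_{m-r-1,m}^{1}$, the sets contributed by $\Acal_{m-r-1,m}^{1}$ are already on the right, so it remains to represent each $A\in\bigcup_{h=1}^{r}H_{h,s-h+1}^{m}\cup\{\vec{m}\}$ as a member of $\mathcal{B}$. Such an $A$ has $h$ holes with $0\le h\le r$, all $\ge s-h+1$, so its first hole satisfies $e(A)\ge s-h+1$ and therefore $\vec{s-h}\subseteq A$ (note $s-h\ge s-r\ge 1$ since $s>r+1$). Taking $l:=s-h$, the set $A\setminus\vec{l}$ consists of elements $>l$ and has cardinality $(m-h)-(s-h)=m-s$; listing it as $\{j_{1},\ldots,j_{m-s}\}$ exhibits $A=\vec{l}\cup\{j_{1},\ldots,j_{m-s}\}\in\mathcal{B}$.

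The step I expect to demand the most care is the hole bookkeeping in the first inclusion: verifying that the defining constraint of $H_{h,s-h+1}^{m}$ (all holes $\ge s-h+1$) is met automatically, which collapses to the single inequality $t\le m-s$, and then checking that it is precisely the sets with more than $r$ holes that are absorbed into $\Acal_{m-r-1,m}^{1}$ rather than escaping the description. Everything else is routine, and no appeal to the stability machinery of the preceding lemmas is required — this is a purely combinatorial identity relating the families $H_{h,l}^{m}$ and $\Acal_{j,m}^{1}$.
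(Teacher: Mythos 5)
Your proof is correct and takes essentially the same approach as the paper's: a double inclusion in which one direction comes from representing each element of $H_{h,s-h+1}^{m}$ as $\vec{s-h}\cup\{j_{1},\ldots,j_{m-s}\}$, and the other from the count $|F|\le l+(m-s)$ together with $|F|=m-h$, which forces every hole to be at least $s-h+1$. The only difference is organizational: you establish that inequality directly and then split into the cases $h\le r$ and $h>r$, whereas the paper first reduces to $F\notin\Acal_{m-r-1,m}^{1}$ (hence $h\le r$) and reaches the same inequality by contradiction.
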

\begin{proof}
Recall $\dcal_{r,s}^{m}\triangleq\Acal_{m-r-1,m}^{1}\cup\bigcup_{h=1}^{r}H_{h,s-h+1}^{m}\cup\{\vec{m}\}.$
Notice that for $1\leq h\leq r$, $H_{h,s-h+1}^{m}=\vec{\{s-h}\cup\{j_{1},\ldots,j_{m-s}\}\}_{s-h+1\leq j_{1}<j_{2}<\ldots<j_{m-s}}$
so clearly the left hand side is contained in the right hand side.
Fix $1\leq l\leq m$ and $1\leq j_{1}\leq\ldots\leq j_{m-s}$. We
will show $F\triangleq\vec{l}\cup\{j_{1},\ldots,j_{m-s}\}\in\dcal_{r,s}^{m}$.
If $F\in\Acal_{m-r-1,m}^{1}$, we are done, so assume $F\notin\Acal_{m-r-1,m}^{1}$.
This implies the number of holes of $F$, which we will denote by
$h,$ is less or equal $r$. We assume w.l.o.g. $h\geq1$. Let $e$
be the first hole of $F$. We will show $e\geq s-h+1$ which will
imply $F\in H_{h,s-h+1}^{m}$. Assume for a contradiction that $e<s-h+1$.
This implies $l<e\leq s-h$ and $|F|<s-h+m-s=m-h$. However as $F$
has exactly $h$ holes $|F|=m-h$ and we have the desired contradiction.
\end{proof}

\begin{defn}
Let $\pcal\in HSP_{n}(m)$. We say that $\pcal$ is \textbf{permutation
stable} if $\sigma\pcal\in HSP_{n}(m)$ for some $\sigma\in S_{m}$
implies $\sigma\pcal=\pcal$.
\end{defn}

%%e

\begin{thm}
\label{thm:hereditary is permutation stable}The hereditary $m$-stable
$m_{1}$-patterns for $m\geq3$ are permutation stable.\end{thm}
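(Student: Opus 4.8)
The plan is to fix a hereditary $m$-stable $m_{1}$-pattern $\pcal$ together with a permutation $\sigma\in S_{m}$ for which $\sigma\pcal$ is again hereditary, and to prove that then $\sigma\pcal=\pcal$; this is precisely the content of permutation stability. The first point to stress is \emph{why} this is not automatic: stability is defined through \emph{naturally ordered} partitions, so it is not an $S_{m}$-invariant notion. Concretely, for $\alpha=(C_{1},\dots,C_{k})\in\Pi\binom{m}{k}$ and $P\subset\vec{m}$ one has $(\sigma P)_{\alpha}=P_{\sigma^{-1}\alpha}$, where $\sigma^{-1}\alpha=(\sigma^{-1}C_{1},\dots,\sigma^{-1}C_{k})$ need not be naturally ordered. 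Re-sorting its blocks by their minima yields a permutation $\tau_{\alpha}\in S_{k}$ and the identity $(\sigma\pcal)_{\alpha}=\tau_{\alpha}\cdot\mathcal{C}_{k}$, where $\mathcal{C}_{k}$ is the (constant) level-$k$ induced pattern of $\pcal$. Thus $\sigma\pcal$ being $m$-stable forces $\tau_{\alpha}\cdot\mathcal{C}_{k}$ to be independent of $\alpha$, i.e.\ all the $\tau_{\alpha}$ lie in a single coset of $\mathrm{Stab}_{S_{k}}(\mathcal{C}_{k})$. This coset constraint, read off at a well-chosen level $k$, is the engine of the whole argument.

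I would first exploit the level $k=2$ instance. By Proposition \ref{thm:3 stable patterns} the two ``reversed'' $2$-patterns $\{\{2\}\}$ and $\{\{2\},\{1,2\}\}$ are \emph{not} stable, while the $\mathcal{C}_{2}$ of $\pcal$ is one of the five stable ones. Hence if $\mathcal{C}_{2}(\pcal)$ is asymmetric --- equivalently, if every member of $\pcal$ contains the minimum $1$ --- then $\tau_{\alpha}\cdot\mathcal{C}_{2}$ can equal a stable $2$-pattern only when every $\tau_{\alpha}$ is trivial, which happens exactly when $\sigma(1)=1$. This already pins the root for the families $\phi_{m}$, $\Acal_{j,m}^{1}$, $\dcal_{r,s}^{m}$, $\Acal_{j,m}^{1}\cup\{\vec{m}\}$ and $\Acal_{j,m}^{1}\cup\phi_{m}$. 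For the purely symmetric families $\{\vec{m}\}$, $\Acal_{j,m}$ and $\Acal_{j,m}\cup\{\vec{m}\}$ there is nothing to prove, since these are $S_{m}$-invariant as sets.

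To finish I would combine two further inputs. The size profile $n_{t}(\pcal)=\#\{P\in\pcal:|P|=t\}$ is an $S_{m}$-invariant, so $\sigma\pcal$ and $\pcal$ have equal profiles; and by Theorem \ref{thm:the only hereditary}, $\sigma\pcal$ is literally one of the thirteen canonical list sets. Together with the \emph{equivariant} data $K_{t}(\pcal)=\bigcap_{|P|=t}P$ (which satisfies $K_{t}(\sigma\pcal)=\sigma K_{t}(\pcal)$), this lets me identify and then match the two sets. For an $\Acal^{1}$-type family, the smallest marked size $t$ has $K_{t}(\pcal)=\{1\}$, so $K_{t}(\sigma\pcal)=\{\sigma(1)\}$; since a list set's singleton intersections equal $\{1\}$, we get $\sigma(1)=1$ and hence $\sigma\pcal=\pcal$. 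For a flag family, $\sigma\pcal$ is a list set whose members of size $>j$ are forced (by the profile $n_{t}=1$ there) to be the canonical initial segments, so $\sigma$ fixes each $\vec{l}$ with $l>j$; the remaining flag members of size $\le j$ are absorbed into the symmetric part $\Acal_{j,m}$, whence $\sigma\pcal=\Acal_{j,m}\cup\phi_{m}=\pcal$. The symmetric summands $\Acal_{j',m}$ and $\{\vec{m}\}$ are $\sigma$-invariant throughout and cause no trouble.

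The main obstacle, where I expect the real combinatorial work to lie, is the $\dcal$-families (items (7) and (8)). Here $\mathcal{C}_{2}$ can be symmetric, so the level-$2$ reduction gives nothing directly, and one must instead compute their profiles via the initial-segment description $\dcal_{r,s}^{m}=\{\vec{l}\cup\{j_{1},\dots,j_{m-s}\}\}\cup\Acal_{m-r-1,m}^{1}$ of Lemma \ref{lem: second representation of D}. The two tasks are: (a) to check that these binomial-sum profiles collide neither with one another across different $(r,s,j)$ nor with the $\Acal^{1}\cup\Acal$-type profiles, so that the list member $\sigma\pcal$ is correctly identified; and (b) to argue that the presence of all initial segments $\vec{l}$ as (parts of) members of $\dcal_{r,s}^{m}$, being equivariant, forces $\sigma$ to fix every initial segment and hence $\sigma\pcal=\pcal$. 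Verifying the absence of profile collisions for the $\dcal$-families, and extracting the initial-segment rigidity from Lemma \ref{lem: second representation of D}, is the step I anticipate to be the most delicate; the remaining families all reduce cleanly to the level-$2$ mechanism together with the uniqueness of list representatives supplied by Theorem \ref{thm:the only hereditary}.
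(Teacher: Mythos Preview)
Your proposal is correct and follows essentially the same route as the paper: a case-by-case verification through the classification of Theorem~\ref{thm:the only hereditary}, with Lemma~\ref{lem: second representation of D} invoked precisely for the $\dcal$-families (items (\ref{enu:D1}) and (\ref{enu:D2})). The paper's own proof is a two-sentence pointer to exactly this; your level-$2$ coset mechanism and the explicit size-profile/intersection invariants are a more articulated version of what ``case by case'' means here, but they do not constitute a genuinely different argument.
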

\begin{proof}
This is proven case by case using the list of Theorem \ref{thm:the only hereditary}.
The only slightly non-trivial cases are articles (\ref{enu:D1}) and
(\ref{enu:D2}) where one uses the representation of Lemma \ref{lem: second representation of D}. \end{proof}

\begin{defn}\label{usl}
Let $\pcal\in SP_{n}(m)$. We say that $\pcal$ has \textbf{unique
stable lifts (usl)} if for every $m'>m$ there exists a unique $\qcal\in SP_{n}(m')$
so that for any $\gamma\in\Pi\binom{m'}{m}$ it holds that $\pcal=\qcal_{\gamma}$. \end{defn}

\begin{rem}
If $\pcal\in SP_{n}(m)$ has usl then $\pcal\in HSP_{n}(m).$\end{rem}

\begin{thm}
\label{thm:usl patterns}The following $m_{1}$-patterns $(m\geq3)$
have unique stable lifts:
\begin{enumerate}
\item $\{\vec{m}\}$ .
\item $\phi_{m}$.
\item For every $1\le j\le m-1$ the collection $\Acal_{j,m}$.
\item For every $1\le j\le m-2$ the collection $\Acal_{j,m}\cup\{\vec{m}\}$.
\item For every $1\le j\le m-2$ the collection $\Acal_{j,m}\cup\phi_{m}$.
\item For every $1\le j\le m-1$ the collection $\Acal_{j,m}^{1}$.
\item \label{usl: dcal}For every $1\leq r<r+1<s<m$\textup{ the collection
$\dcal_{r,s}^{m}$}.
\item \label{usl: dcal_acal}For every $1\leq r<r+1<s<m$\textup{ }and $1\leq j\leq m-r-1$,
$\dcal_{r,s}^{m}\cup\Acal_{j,m}$.
\item For every $1\le j\le m-2$ the collection $\Acal_{j,m}^{1}\cup\{\vec{m}\}.$
\item For every $1\le j\le m-2$ the collection $\Acal_{j,m}^{1}\cup\phi_{m}$.
\item For every $2\le j\le m-1$ and $j'<j$ the collection $\Acal_{j,m}^{1}\cup\Acal_{j'}$.
\item For every $2\le j\le m-2$ and $j'<j$ the collection $\Acal_{j,m}^{1}\cup\Acal_{j',m}\cup\{\vec{m}\}$.
\item For every $2\le j\le m-2$ and $j'<j$ the collection $\Acal_{j,m}^{1}\cup\Acal_{j',m}\cup\phi_{m}$.
\end{enumerate}
\end{thm}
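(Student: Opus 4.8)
The plan is to reduce the assertion to the single-step lifting analysis already carried out in the proof of Theorem \ref{thm: standard are the only stable}. By the remark following Definition \ref{usl}, having usl is stronger than being hereditary, so each pattern in the list is in particular hereditary (it appears in Theorem \ref{thm:the only hereditary}); consequently, for every $m'>m$ a stable lift $\qcal\in SP_{1}(m')$ with $\qcal_{\gamma}=\pcal$ exists, and the whole content of the theorem is the \emph{uniqueness} of such a lift. Observe first that uniqueness need only be tested against one partition: if $\qcal\in SP_{1}(m')$ is $m'$-stable, then by Definition \ref{SP} the pattern $\qcal_{\gamma}$ is independent of $\gamma\in\Pi\binom{m'}{m}$, so the defining condition $\qcal_{\gamma}=\pcal$ for all $\gamma$ is equivalent to the single equation $\qcal_{\pi}=\pcal$.

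Next I would pass from an arbitrary $m'>m$ to the elementary step by induction on $m'-m$, using the amalgamation identity $(\qcal_{\delta})_{\beta}=\qcal_{\delta_{\beta}}$ and the fact that every $\gamma\in\Pi\binom{m'}{m}$ factors through the successive elementary projections in $\Pi\binom{k+1}{k}$. Given an $m'$-stable lift $\qcal$ of $\pcal$, projecting one step via some $\eta\in\Pi\binom{m'}{m'-1}$ produces an $(m'-1)$-stable pattern $\qcal_{\eta}$ which, by the amalgamation identity, is again a lift of $\pcal$; by the induction hypothesis it is the unique $(m'-1)$-level lift, and in particular it lies on the list. It then remains only to lift one further step uniquely, which is the base case.

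For the base case I would invoke directly the case-by-case determination in the proof of Theorem \ref{thm: standard are the only stable}. With $\pi=\gamma_{m,m+1}^{m+1}$, that proof starts from $\Pcal_{\pi}$ equal to a standard $m$-pattern and exhibits all admissible $(m+1)$-stable $\Pcal$. For each pattern in the present list this determination returns a single $\Pcal$ --- for instance $\Acal_{j,m}\mapsto\Acal_{j,m+1}$ when $j\le m-1$, $\phi_{m}\mapsto\phi_{m+1}$, and $\dcal_{r,s}^{m}\mapsto\dcal_{r+1,s+1}^{m+1}$ (the latter forced by Lemmas \ref{lem:H is in pcal} and \ref{lem:H^m+1_1_1_in_pcal}, which pin down the hole-structure $H_{h+1,l}^{m+1}$ completely). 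In precisely the borderline cases $\Acal_{m,m}$, $\Acal_{m,m}^{1}$ and $\Acal_{m,m}^{1}\cup\Acal_{j',m}$ the same analysis produces several inequivalent lifts, owing to the free choice of $\ncal\subset H_{1,1}^{m+1}$ and the optional presence of $\vec{m+1}$; this is exactly why those $j=m$ instances, though hereditary, are absent here and why the bounds read $j\le m-1$. Finally, by the projection formulas of Theorem \ref{thm:Standard_Examples_are_Stable} the unique one-step lift of each listed pattern is a pattern of the same shape with shifted indices that again belongs to the list, so the induction closes.

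The main obstacle is the base case: one must extract from the proof of Theorem \ref{thm: standard are the only stable} that for every listed pattern the $(m+1)$-stable lift is genuinely forced, and verify that uniqueness survives the $\dcal_{r,s}^{m}$ cases, where the hole-collections must be identified exactly rather than merely bounded.
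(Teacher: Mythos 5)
Your proposal is correct and follows essentially the same route as the paper, whose entire proof is the observation that the statement follows from the case-by-case analysis in the proof of Theorem \ref{thm: standard are the only stable}: for each listed pattern that analysis forces a unique $(m+1)$-stable lift, while the excluded borderline cases ($j=m$) are exactly those where several lifts appear (Lemma \ref{lem:non usl cases}). The extra scaffolding you supply --- existence of lifts via Theorem \ref{thm:the only hereditary}, reduction of the ``for all $\gamma$'' condition to a single $\pi$ via stability, and the induction on $m'-m$ using closure of the list under one-step lifts --- is precisely what the paper's terse proof leaves implicit.
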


\begin{proof}
The proof follows easily from the proof of Theorem \ref{thm: standard are the only stable}.\end{proof}

%%e
Note that this list is the list of Theorem \ref{thm:the only hereditary}
with the items (3),(6) and (11) for the case $j=m$ removed.
These cases do not have usl as the following lemma shows.

\begin{lem}
\label{lem:non usl cases}Let $m\geq3.$ The following holds:
\begin{enumerate}
\item
$p_{\pi}^{-1}(\Acal_{m,m})\cap HSP_{1}(m+1)= \\
\{\Acal_{m,m+1},\Acal_{m+1,m+1},\Acal_{m-1,m+1}\cup\Acal_{m,m+1}^{1},\Acal_{m-1,m+1}\cup\{\vec{m+1}\}\}\\
\cup\{\dcal_{1,l}^{m+1} \cup\acal_{m-1,m+1}\}_{l\in\{2,\ldots,m+1\}}.$
\item
$p_{\pi}^{-1}(\Acal_{m,m}^{1})\cap HSP_{1}(m+1)= \\
\{\Acal_{m,m+1}^{1},\Acal_{m+1,m+1}^{1},\Acal_{m-1,m+1}^{1}\cup\{\vec{m+1}\}\}\cup\{\dcal_{1,l}^{m+1}\}_{l\in\{3,\ldots,m+1\}}$.
\item
For
%%e ???
%%y Indeed it should be j\leq m-2 and not j<m-2.
$1\leq j\leq m-2$,
%$1\leq j < m-2$,
$p_{\pi}^{-1}(\Acal_{m,m}^{1}\cup\acal_{j,m}) \cap HSP_{1}(m+1) =\\
\{\Acal_{m,m+1}^{1}\cup\Acal_{j,m+1},\Acal_{m-1,m+1}^{1}\cup\Acal_{j,m+1}\cup\{\vec{m+1}\},\Acal_{m+1,m+1}^{1}\cup\Acal_{j,m+1}\}\\
\cup\{\dcal_{1,l}^{m+1}\cup\Acal_{j,m+1}\}_{l\in\{3,\ldots,m+1\}}$.
\end{enumerate}
\end{lem}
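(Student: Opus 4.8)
The plan is to obtain all three equalities by intersecting two lists that are already at our disposal: the list of all $(m+1)$-stable lifts of the pattern in question, which is produced inside the proof of Theorem~\ref{thm: standard are the only stable}, and the list of all hereditary patterns, which is Theorem~\ref{thm:the only hereditary}. Throughout put $\pi=\gamma_{m,m+1}^{m+1}$ and read $p_{\pi}^{-1}(\Qcal)\cap HSP_{1}(m+1)$ as the collection of $\pcal\in HSP_{1}(m+1)$ with $\pcal_{\pi}=\Qcal$; since a hereditary pattern is in particular $(m+1)$-stable, $\pcal_{\pi}$ is independent of the reducing partition and is computed by Theorem~\ref{thm:Standard_Examples_are_Stable}.

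For part (1) I would begin from the analysis of the case $\pcal_{\pi}=\Acal_{m,m}$ in the proof of Theorem~\ref{thm: standard are the only stable} (item~(3b) there), which shows that every $(m+1)$-stable such $\pcal$ is either $\Acal_{m,m+1}$, or $\Acal_{m-1,m+1}\cup\ncal'$ with $\ncal'\subset H_{1,1}^{m+1}$ and $|\ncal'|=m$, or $\Acal_{m-1,m+1}\cup\ncal\cup\{\vec{m+1}\}$ with $\ncal\subset H_{1,1}^{m+1}$ arbitrary. It remains to keep only the hereditary members. By Theorem~\ref{thm:the only hereditary} the hereditary patterns of this shape are exactly those whose single-hole content is a \emph{threshold} family $H_{1,l}^{m+1}$; this threshold form is precisely what heredity imposes, through Lemma~\ref{lem:H is in pcal} applied to a stable lift one level higher, so every non-threshold $\ncal$ is discarded. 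I would then rewrite the survivors in the notation of the statement by means of
\[
\dcal_{1,l}^{m+1}\cup\Acal_{m-1,m+1}=\Acal_{m-1,m+1}\cup H_{1,l}^{m+1}\cup\{\vec{m+1}\},
\]
together with the endpoint evaluations $\Acal_{m-1,m+1}\cup H_{1,1}^{m+1}\cup\{\vec{m+1}\}=\Acal_{m+1,m+1}$, $\dcal_{1,2}^{m+1}\cup\Acal_{m-1,m+1}=\Acal_{m+1,m+1}^{1}\cup\Acal_{m-1,m+1}$ and $\dcal_{1,m+1}^{m+1}\cup\Acal_{m-1,m+1}=\Acal_{m-1,m+1}\cup\phi_{m+1}$, and with the identity $\Acal_{m-1,m+1}\cup(H_{1,1}^{m+1}\setminus\{I_{1}^{m+1}\})=\Acal_{m,m+1}^{1}\cup\Acal_{m-1,m+1}$ coming from the $|\ncal'|=m$ branch. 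Collecting the outcomes reproduces the list in (1), with $l$ ranging over $\{2,\dots,m+1\}$.

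For parts (2) and (3) I would repeat the argument starting from the case $\pcal_{\pi}=\Acal_{m,m}^{1}$ in the same proof (item~(6b)), whose stable lifts are $\Acal_{m,m+1}^{1}$ and $\Acal_{m-1,m+1}^{1}\cup\ncal\cup\{\vec{m+1}\}$ with $\ncal\subset H_{1,2}^{m+1}$. Restricting to threshold $\ncal=H_{1,l}^{m+1}$ produces $\dcal_{1,l}^{m+1}$, and the endpoint coincidences $\dcal_{1,2}^{m+1}=\Acal_{m+1,m+1}^{1}$ and $\dcal_{1,m+1}^{m+1}=\Acal_{m-1,m+1}^{1}\cup\phi_{m+1}$ account for the index range $l\in\{3,\dots,m+1\}$. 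For (3), observe that for $j\le m-2$ the summand $\Acal_{j,m}$ lifts uniquely to $\Acal_{j,m+1}$ by Lemma~\ref{lem:pcal_j,m+1 is contained} and plays no role in the analysis of the $\Acal^{1}$-part; hence the stable lifts of $\Acal_{m,m}^{1}\cup\Acal_{j,m}$ are exactly those of (2) with $\Acal_{j,m+1}$ adjoined, and intersecting with $HSP_{1}(m+1)$ yields the stated list verbatim.

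The delicate point throughout is the heredity step together with the endpoint bookkeeping: one must confirm that the only hereditary patterns among the stable lifts are those with threshold single-hole content, and then match each threshold value $l$—especially the two extreme values $l=2$ and $l=m+1$, where the $\dcal$-notation degenerates—to the correct entry of Theorem~\ref{thm:the only hereditary}. Recognizing the $\dcal$-patterns in these degenerate cases is cleanest through the alternative description of Lemma~\ref{lem: second representation of D}.
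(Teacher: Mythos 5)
Your proposal is correct and follows essentially the same route as the paper: the paper's own proof also starts from the classification of $(m+1)$-stable lifts in the relevant cases of the proof of Theorem \ref{thm: standard are the only stable} (cases $\Acal_{m,m}$, $\Acal_{m,m}^{1}$, $\Acal_{m,m}^{1}\cup\Acal_{j,m}$), filters out the non-hereditary lifts by the same threshold mechanism (the paper cites the articles of that proof which invoke Lemma \ref{lem:H is in pcal}, you cite Theorem \ref{thm:the only hereditary} together with Lemma \ref{lem:H is in pcal} — the same content, since that theorem is itself extracted from that proof), and finishes with the identical endpoint identifications $\dcal_{1,2}^{m+1}=\Acal_{m+1,m+1}^{1}$, $\ncal=H_{1,1}^{m+1}$ giving $\Acal_{m+1,m+1}$, and $\ncal'=H_{1,2}^{m+1}$ giving $\Acal_{m,m+1}^{1}\cup\Acal_{m-1,m+1}$. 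Your bookkeeping of the degenerate $\dcal$-cases and of part (3) via adjoining the uniquely lifted $\Acal_{j,m+1}$ matches the paper's treatment.
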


\begin{proof}
$ $
\begin{enumerate}
\item Let $\pcal\in p_{\pi}^{-1}(\Acal_{m,m})\cap HSP_{1}(m+1).$ According
to article (\ref{enu:Acal_j_m}) in the proof of Theorem \ref{thm: standard are the only stable},
$\Pcal_{\pi}=\Acal_{m,m}$ implies $\Pcal=\Acal_{m-1,m+1}\cup\ncal'$
for $\ncal'\subset H_{1,1}^{m}$ with $|\ncal'|=m$ or $\pcal=\Acal_{m,m+1}$
or $\Pcal=\Acal_{m-1,m+1}\cup\ncal\cup\{\vec{m+1}\}$, for $\ncal\subset H_{1,1}^{m+1}$.
By article (\ref{enu:acal_cup_ncal}), if $\ncal\neq H_{2,1}^{m}$
in the proof of the Theorem \ref{thm: standard are the only stable},
$\Acal_{m-1,m+1}\cup\ncal'\notin HSP_{1}(m+1)$. If $\ncal=H_{2,1}^{m}$,
we have $\pcal=\Acal_{m-1,m+1}\cup\Acal_{m,m+1}^{1}$. By article
(\ref{enu:acal_ncal_vec(m)}) in the proof of Theorem \ref{thm: standard are the only stable},
$\Acal_{m-1,m+1}\cup\ncal\cup\{\vec{m+1}\}\in HSP_{1}(m+1)$ implies
for $\ncal\neq\emptyset$, $\ncal=H_{1,l}^{m+1}$ for some
$l\in\{1,2,\ldots,m+1\}$.
$l=1$ corresponds to $\pcal=\acal_{m+1,m+1}$ and $l\geq2$ corresponds
to $\pcal=\dcal_{1,l}^{m+1}\cup\acal_{m-1,m+1}$. If $\ncal=\emptyset$,
$\pcal=\Acal_{m-1,m+1}\cup\{\vec{m+1}\}$.
\item Let $\pcal\in p_{\pi}^{-1}(\Acal_{m,m}^{1})\cap HSP_{1}(m+1).$ According
to article (\ref{case:Acal_j_m^1}) in the proof of Theorem \ref{thm: standard are the only stable},
$\Pcal_{\pi}=\Acal_{m,m}^{1}$ implies $\pcal=\Acal_{m,m+1}^{1}$
or $\Pcal=\Acal_{m-1,m+1}^{1}\cup\ncal\cup\{\vec{m+1}\}$, for $\ncal\subset H_{1,2}^{m+1}$.
By article (\ref{case:Acal^1_ncal_vec(m)}) in the proof of Theorem
\ref{thm: standard are the only stable}, $\Acal_{m-1,m+1}^{1}\cup\ncal\cup\{\vec{m+1}\}\in HSP_{1}(m+1)$
for $\ncal\neq\emptyset$implies $\ncal=H_{1,l}^{m+1}$ for some
%%e
$l\in\{2,\ldots,m+1\}$.
$l=2$ corresponds to $\pcal=\acal_{m+1,m+1}^{1}$ and $l\geq3$ corresponds
to $\pcal=\dcal_{1,l}^{m+1}$. If $\ncal=\emptyset$, $\pcal=\Acal_{m-1,m+1}^{1}\cup\{\vec{m+1}\}$.
\item
Let $\pcal\in p_{\pi}^{-1}(\Acal_{m,m}^{1}\cup\acal_{j,m})\cap HSP_{1}(m+1)$
for some
%%e   ???
%%y Same as above.
$1\leq j\leq m-2$.
%$1\leq j < m-2$.
According to article (\ref{enu:acal^1_cup_acal})
in the proof of Theorem \ref{thm: standard are the only stable},
$\Pcal_{\pi}=\Acal_{m,m}^{1}\cup\acal_{j,m}$ implies $\Pcal=\Acal_{j,m+1}\cup\Acal_{m-1,m+1}^{1}\cup\ncal\cup\{\vec{m+1}\}$
with $\ncal\subset H_{1,2}^{m+1}$ or $\Pcal=\Acal_{j,m+1}\cup\Acal_{m,m+1}^{1}$.
By article (\ref{enu:acal_cup_acal^1_cup_ncal_cup_vec(m)}) in the
proof of Theorem \ref{thm: standard are the only stable}, $\Acal_{j,m+1}\cup\Acal_{m-1,m+1}^{1}\cup\ncal\cup\{\vec{m+1}\}\in HSP_{1}(m+1)$
for $\ncal\neq\emptyset$ implies $\ncal=H_{1,l}^{m+1}$ for some
$l\in\{2,\ldots,m+1\}$. $l=2$ corresponds to $\pcal=\Acal_{j,m+1}\cup\Acal_{m+1,m+1}^{1}$
and $l\geq3$ corresponds to $\pcal=\dcal_{1,l}^{m+1}\cup\Acal_{j,m+1}$.
If $\ncal=\emptyset$, $\pcal=\Acal_{m-1,m+1}^{1}\cup\Acal_{j,m+1}\cup\{\vec{m+1}\}$.
\end{enumerate}
\end{proof}

\section{An application of the dual Ramsey Theorem to stable patterns}

The tool which enables the application of the
combinatorial results
of the previous section to hyperspace actions is the dual Ramsey Theorem.

\subsection{Ramsey Theorems}\label{Ramsey}

We denote by $\tilde{\Pi}\binom{s}{k}$ the collection of unordered
partitions of $\{1,\ldots,s\}$ into $k$ nonempty sets. Notice there
is a natural bijection $\nu:\tilde{\Pi}\binom{s}{k}\leftrightarrow\Pi\binom{s}{k}.$
\begin{thm}
\label{thm:Dual_Ramsey}{[}The dual Ramsey Theorem{]} Given positive
integers $k,m,r$ there exists a positive integer $N=DR(k,m,r)$ with
the following property: for any coloring of $\tilde{\Pi}\binom{N}{k}$
by $r$ colors there exists a partition $\alpha=\{A_{1},A_{2},\dots,A_{m}\}\in\tilde{\Pi}\binom{N}{m}$
of $N$ into $m$ non-empty sets such that all the partitions of $N$
into $k$ non-empty sets (i.e. elements of $\tilde{\Pi}\binom{N}{k}$)
whose atoms are measurable with respect to $\alpha$ (i.e. each equivalence
class is a union of elements of $\alpha$) have the same color. \end{thm}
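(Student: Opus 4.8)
The plan is to recognize this statement as the Graham--Rothschild partition theorem and to prove it by encoding partitions as rigid surjections, with the Hales--Jewett theorem as the combinatorial engine, inducting on the number of parts $k$. First I would pass through the natural bijection $\nu:\tilde{\Pi}\binom{N}{k}\leftrightarrow\Pi\binom{N}{k}$, so that each coloured partition is presented by its naturally ordered representative; a naturally ordered partition of $\vec{N}$ into $k$ blocks is the same datum as a \emph{rigid surjection} $f\colon\vec{N}\to\vec{k}$ (send each point to the index of its block), i.e.\ a surjective word in $\vec{k}^{\,N}$ whose letters first appear in increasing order. Under this encoding the condition ``each block of the $k$-partition is a union of elements of $\alpha$'' becomes a factorisation: if $\alpha$ corresponds to the rigid surjection $g\colon\vec{N}\to\vec{m}$, the $k$-partitions coarsening $\alpha$ are exactly the composites $h\circ g$ with $h\colon\vec{m}\to\vec{k}$ rigid, and $h\mapsto h\circ g$ is a bijection from $\Pi\binom{m}{k}$ onto them. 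Thus the theorem to prove reads: for a given $r$-colouring $c$ of the rigid surjections $\vec{N}\to\vec{k}$ there is a rigid surjection $g\colon\vec{N}\to\vec{m}$ so that $\{h\circ g:h\in\Pi\binom{m}{k}\}$ is $c$-monochromatic.

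Next I would prove this reformulated statement by induction on $k$. The case $k=1$ is vacuous, since there is a unique partition into one block. The Hales--Jewett theorem supplies the one-dimensional core of the inductive step: freezing a finite $m$-block partition and encoding a coarsening $h$ of it as a rigid word in $\vec{k}^{\,m}$, a monochromatic combinatorial line is precisely a single $\alpha$-block whose label may be moved through all $k$ values without changing the colour $c$. To upgrade this one free block to the full family of all coarsenings of a single $\alpha$, one iterates, applying the Hales--Jewett theorem on a fresh stretch of coordinates at each stage and amalgamating the successive partial partitions through the operation $\gamma_\beta$ (for which $(\Pcal_\gamma)_\beta=\Pcal_{\gamma_\beta}$); the amalgamated partition is again naturally ordered, so rigidity is preserved, and the resulting $\alpha$ is valid for every coarsening at once.

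The main obstacle is exactly this passage from one free block to \emph{all} of them: the Hales--Jewett theorem delivers monochromaticity along a single combinatorial line (one way of coarsening), whereas the theorem demands a single $\alpha$ for which every coarsening of $\alpha$ into $k$ parts receives the same colour. Overcoming it is the content of the parametrised Graham--Rothschild strengthening, proved by an induction in which each step must simultaneously create enough room on the ground set to host the next monochromatic line and preserve all the colours already pinned down, all while maintaining the first-occurrence (natural-ordering) bookkeeping that keeps the surjections rigid; this bookkeeping is where most of the technical care lies. Since the result is classical, in the write-up I would simply cite the dual Ramsey theorem of Graham and Rothschild (and its treatment by Pr\"omel and Voigt) rather than reproduce this induction in full.
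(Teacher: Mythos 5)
Your proposal ends exactly where the paper's proof does: the paper proves this theorem by a one-line citation (Corollary 10 of Graham--Rothschild), and you likewise conclude by deferring to that classical result rather than reproducing the argument. Your preliminary sketch of the rigid-surjection encoding and the Hales--Jewett induction (in the style of Pr\"omel--Voigt) is a correct outline of how the cited theorem is proved, but since both you and the paper ultimately rest on the citation, the approaches are essentially the same.
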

\begin{proof}
This is Corollary 10 of \cite{GR71}.%
\begin{comment}
\label{stable-cor-weak} For any $m$, for any number $N\ge DR(k,m,r)$,
with $r=2^{2^{kn}}$, for any $N$-pattern $\Pcal$, there exists
a partition $\alpha\in\Pi\binom{N}{m}$ such that the $m$-pattern
$\Pcal_{\alpha}$ is a weakly $k$-stable $m$-pattern.
\end{comment}
{}\end{proof}
\begin{thm}
{[}The strong dual Ramsey Theorem{]} Given positive integers $m,r_{2},\allowbreak\dots,r_{m}$
there exists a positive integer $N=SDR(m;r_{2},\dots,r_{m-1})$ with
the following property: for any colorings of $\tilde{\Pi}\binom{N}{j}$
by $r_{j}$ colors, $2\le j\le m-1$, there exists a partition $\alpha=(A_{1},A_{2},\dots,A_{m})\in\tilde{\Pi}\binom{N}{m}$
of $N$ into $m$ non-empty sets such that for any $2\le j\le m-1$,
any two partitions of $N$ into $j$ non-empty sets whose atoms are
measurable with respect to $\al$ (i.e. each equivalence class is
a union of elements of $\alpha$) have the same color. \end{thm}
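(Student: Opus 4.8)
The plan is to derive this simultaneous version from the single‑coloring dual Ramsey theorem (Theorem \ref{thm:Dual_Ramsey}) by iterating over the arities $j=2,3,\dots,m-1$: I homogenize one level at a time, collapsing partitions to partitions between successive applications. The guiding principle is that monochromaticity is \emph{monotone under coarsening}: if $\beta$ is a partition such that all $\beta$-measurable $j$-partitions receive a single color, and $\al$ is any coarsening of $\beta$ (each block of $\al$ a union of blocks of $\beta$), then every $\al$-measurable $j$-partition is in particular $\beta$-measurable, hence monochromatic. Thus, once a level has been homogenized, we may coarsen further without spoiling it, which is exactly what lets the stages be chained.

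Concretely, assume $m\geq 3$ (for $m\le 2$ the index range is empty and $N=m$ works trivially). I would define the required size by the backward recursion $q_{m-1}=m$ and $q_{j-1}=DR(j,q_j,r_j)$ for $j=m-1,m-2,\dots,2$, and set $N=SDR(m;r_2,\dots,r_{m-1}):=q_1$. Since $DR(k,p,r)\ge p$ we have $q_{j-1}\ge q_j\ge q_{m-1}=m>j$ throughout, so every invocation of Theorem \ref{thm:Dual_Ramsey} below is legitimate and every quotient is large enough to carry $j$-partitions. Given colorings $c_j$ of $\tilde{\Pi}\binom{N}{j}$, I run the forward pass on the ground set $\vec{q_1}=\vec{N}$: apply Theorem \ref{thm:Dual_Ramsey} with parameters $(k,\text{target},r)=(2,q_2,r_2)$ to $c_2$, obtaining a $q_2$-partition $\beta_2$ all of whose measurable $2$-partitions share one $c_2$-color. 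Identifying the atoms of $\beta_2$ with $\vec{q_2}$, each remaining coloring $c_j$ ($j\ge 3$) descends to a coloring of $\tilde{\Pi}\binom{q_2}{j}$ via the bijection between $\beta_2$-measurable $j$-partitions of $\vec{N}$ and $j$-partitions of the quotient $\vec{q_2}$. Repeating, at stage $j$ I apply Theorem \ref{thm:Dual_Ramsey} with $(j,q_j,r_j)$ to the descended coloring on $\tilde{\Pi}\binom{q_{j-1}}{j}$, producing a $q_j$-partition $\beta_j$ of $\vec{q_{j-1}}$ that homogenizes level $j$; after stage $j=m-1$ I have reached $q_{m-1}=m$ parts.

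The output is the composite $\al=\beta_{m-1}\circ\cdots\circ\beta_2$, viewed as an $m$-partition of $\vec{N}$. That a partition of a partition of $\vec N$ is again a naturally ordered partition of $\vec{N}$ is precisely the amalgamation operation $\gamma_\beta$, and the identity $(\Pcal_\gamma)_\beta=\Pcal_{\gamma_\beta}$ recorded earlier guarantees that the successive descended colorings are mutually compatible. To verify the conclusion, fix $2\le j\le m-1$. Because each $\beta_i$ with $i>j$ only coarsens, $\al$ is a coarsening of the composite $\beta_{\le j}:=\beta_j\circ\cdots\circ\beta_2$; hence every $\al$-measurable $j$-partition is $\beta_{\le j}$-measurable, and these are exactly the pullbacks of the $\beta_j$-measurable $j$-partitions homogenized at stage $j$. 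By the monotonicity principle they all carry the same $c_j$-color, as required.

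I expect the only genuine work to lie in the middle step: pinning down the natural bijection between $\al$-measurable (resp.\ $\beta$-measurable) $j$-partitions of $\vec{N}$ and $j$-partitions of the successive quotients, and checking that $c_j$ descends consistently through the whole tower of collapses. This is where the amalgamation formalism $(\Pcal_\gamma)_\beta=\Pcal_{\gamma_\beta}$ does the heavy lifting; once that identification is in place, both the validity of each application of Theorem \ref{thm:Dual_Ramsey} and the preservation of earlier homogeneity under later coarsenings follow immediately from the monotonicity observation.
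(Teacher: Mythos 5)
Your proof is correct and is essentially the paper's own argument: both derive the simultaneous statement by iterating the dual Ramsey theorem once per arity, chaining the successive applications through quotients, and invoking the fact that monochromaticity persists under coarsening of the homogenizing partition. The only difference is cosmetic: you homogenize arities in increasing order ($j=2$ first, on the largest ground set, via the recursion $q_{m-1}=m$, $q_{j-1}=DR(j,q_j,r_j)$), whereas the paper proceeds in decreasing order ($j=m-1$ first, via $n_2=DR(2,m;r_2)$, $n_{j+1}=DR(j+1,n_j;r_{j+1})$); the structure of the tower and the coarsening argument are otherwise identical.
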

\begin{proof}
{[}Using the dual Ramsey Theorem{]} Set $n_{2}=DR(2,m;r_{2})$, $n_{3}=DR(3,n_{2};r_{3})$
and, by recursion $n_{j+1}=DR(j+1,n_{j};r_{j+1})$ for $2\le j<m-2$.
It is now easy to check that $n=n_{m-1}$ satisfies our claim. (As
a demonstration set $n_{2}=DR(2,m;r_{2}),\, N=n_{3}=DR(3,n_{2};r_{3})$.
Start with a partition $\al_{3}=\{B_{1},\dots,B_{{n_{2}}}\}$ of $N$
which is good for all partitions of $N$ into $j=3$ atoms and the
$r_{3}$ colors. Next choose a partition $\al_{2}=\{A_{1},\dots,A_{m}\}$
of $n$ which is $\al_{3}$-measurable, and which is good for all
partitions of $N$ into $j=2$ atoms and the $r_{2}$ colors. It now
follows that $\al:=\al_{2}=\{A_{1},A_{2},\dots,A_{m}\}$ has the required
property: all $2$-partitions of $N$ which are $\al$-measurable
are monochromatic and all $3$-partitions of $N$ which are $\al$-measurable
are monochromatic.) \end{proof}
\begin{cor}
\label{stable-cor} Let $m,n$ be positive integers. For any number $N\geq SDR(m;r_{2},\dots,r_{m-1})$,
with $r_{k}=2^{(2^{k}-1)^{n}}-1$, for any $N_{n}$-pattern $\Pcal$, there
exists a partition $\alpha\in\Pi\binom{N}{m}$ such that the $m_{n}$-pattern
$\Pcal_{\alpha}$ is \textup{an $m$-stable $m_{n}$-pattern. }\end{cor}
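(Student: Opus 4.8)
The plan is to transcribe the stability condition into a monochromaticity statement and then feed it into the strong dual Ramsey Theorem, the colors being the possible induced patterns.

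First I would set up the coloring and check its size. For $2\le k'\le m-1$ and a naturally ordered partition $\gamma=(C_{1},\dots,C_{k'})\in\Pi\binom{N}{k'}$, the induced pattern $\Pcal_{\gamma}$ is again a $k'_{n}$-pattern: for $P=(P^{1},\dots,P^{n})\in\Pcal$ the vector $P_{\gamma}$ has $i$-th coordinate $\{j:C_{j}\cap P^{i}\ne\emptyset\}$, which is non-empty because $P^{i}$ is, so $\Pcal_{\gamma}$ is a non-empty collection of non-empty vectors in $(Exp(\vec{k'}))^{n}$. As recorded when patterns were defined, there are exactly $r_{k'}=2^{(2^{k'}-1)^{n}}-1$ such $k'_{n}$-patterns, so the assignment $\gamma\mapsto\Pcal_{\gamma}$ takes at most $r_{k'}$ values. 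Using the natural bijection $\nu:\tilde{\Pi}\binom{N}{k'}\leftrightarrow\Pi\binom{N}{k'}$, I would then define, for each $2\le j\le m-1$, a coloring $c_{j}$ of $\tilde{\Pi}\binom{N}{j}$ by $c_{j}(\tilde{\gamma})=\Pcal_{\nu(\tilde{\gamma})}$, a coloring with at most $r_{j}$ colors.

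Next, since $N\ge SDR(m;r_{2},\dots,r_{m-1})$, the strong dual Ramsey Theorem yields a partition $\alpha\in\tilde{\Pi}\binom{N}{m}$, which I take naturally ordered via $\nu$, such that for every $2\le j\le m-1$ all $\alpha$-measurable $j$-partitions of $\vec{N}$ receive the same color $c_{j}$; equivalently, they induce one and the same $j_{n}$-pattern. I would then verify that $\Pcal_{\alpha}$ is $m$-stable. Fix $2\le k'\le m$ and $\beta\in\Pi\binom{m}{k'}$. By the amalgamation identity $(\Pcal_{\alpha})_{\beta}=\Pcal_{\alpha_{\beta}}$ and the fact that $\alpha_{\beta}$ is naturally ordered, one has $(\Pcal_{\alpha})_{\beta}=\Pcal_{\alpha_{\beta}}=c_{k'}(\nu^{-1}(\alpha_{\beta}))$ whenever $k'\le m-1$. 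As $\beta$ runs over $\Pi\binom{m}{k'}$, the amalgamated partitions $\alpha_{\beta}$ run over precisely the naturally ordered $\alpha$-measurable $k'$-partitions of $\vec{N}$, so by the choice of $\alpha$ they all carry the same color $c_{k'}$; hence $(\Pcal_{\alpha})_{\beta}$ is independent of $\beta$. For $k'=m$ the only $\beta$ is the partition into singletons, so there is nothing to check. By Definition \ref{SP} this shows $\Pcal_{\alpha}\in SP_{n}(m)$.

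The single point demanding care is the ordered/unordered bookkeeping: $\Pcal_{\gamma}$ genuinely changes when one permutes the blocks of $\gamma$, so the consistent identification via $\nu$ (always passing to the naturally ordered representative) is essential both for $c_{j}$ to be well defined and for the Ramsey conclusion to line up exactly with the stability condition of Definition \ref{SP}. One must also confirm that the $\alpha_{\beta}$ are indeed exactly the $\alpha$-measurable $k'$-partitions to which the theorem applies. Once this is in place the argument is a direct translation, the number of colors $r_{j}$ matching precisely the number of $j_{n}$-patterns.
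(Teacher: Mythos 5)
Your proof is correct and takes essentially the same route as the paper's: both color the (naturally ordered, via $\nu$) $j$-partitions of $\vec{N}$ by their induced $j_{n}$-patterns with $r_{j}=2^{(2^{j}-1)^{n}}-1$ colors, invoke the strong dual Ramsey theorem, and deduce stability from the amalgamation identity $(\Pcal_{\alpha})_{\beta}=\Pcal_{\alpha_{\beta}}$ together with the fact that the amalgamated partitions are naturally ordered and $\alpha$-measurable. Your additional bookkeeping (the count of patterns, the ordered/unordered identification, the trivial case $k'=m$, and the bijection between the $\beta$'s and the $\alpha$-measurable $k'$-partitions) merely makes explicit what the paper leaves implicit.
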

\begin{proof}
{[}Using the strong dual Ramsey Theorem{]} We define for $2\leq j\leq m-1$ the colorings $f_j:\binom{N}{k}\rightarrow C_{n}(k)$
by $\alpha\mapsto\Pcal_{\alpha}$ in $r_k=|C_{n}(k)|=2^{(2^{k}-1)^{n}}-1$ colors.
By the strong dual Ramsey Theorem applied to $f\circ\nu$ there exists
a partition $\alpha=(A_{1},A_{2},\dots,A_{m})\in\Pi\binom{N}{m}$
of $N$ into $m$ non-empty sets such that for any $(m-2)$-tuple
of naturally ordered partitions of $N$ into $k=2,\ldots,m-1$ non-empty
sets (i.e. elements of $\Pi\binom{N}{k}$) respectively whose atoms
are measurable with respect to $\alpha$ (i.e. each equivalence class
is a union of elements of $\alpha$) have the same color. Let $\beta_{1}$
and $\beta_{2}$ be two naturally ordered partitions of $m$ into
$k$ elements for some $2\leq k\leq m-1$, then as the amalgamated
partitions $\alpha_{\beta_{1}},\alpha_{\beta_{2}}$ are measurable
with respect to $\alpha$ and naturally ordered, $(\Pcal_{\alpha})_{\beta_{1}}=\Pcal_{\alpha_{\beta_{1}}}=\Pcal_{\alpha_{\beta_{2}}}=(\Pcal_{\alpha})_{\beta_{2}}$
as desired.
\end{proof}

\section{The minimal subspaces of $Exp(Exp(X))$}

\subsection{Clopen partitions}

Let $X$ be a Hausdorff zero-dimensional compact h-homogeneous space.
Denote by $\D$ $(\tilde{\D})$ the directed set (semilattice) consisting
of all finite ordered (unordered) clopen partitions of $X$. We denote
the members of $\D$ $(\tilde{\D})$ by $\alpha=(A_{1},{A}_{2},\ldots,{A}_{m})$
$(\tilde{\alpha}=\{A_{1},A_{2}, \ldots,A_{m}\})$.
The relation is given by refinement: $\alpha\preceq{\beta}$
$(\tilde{\alpha}\preceq\tilde{\beta})$ iff for any ${B}\in \beta$
$({B}\in\tilde{\beta})$, there is $A \in \alpha $ $(A \in\tilde{\alpha})$
so that ${B}\subset{A}$. The join (least upper bound) of
$\alpha$ and $\beta$, $\alpha\vee\beta=\{A\cap{B} : \: A\in \alpha,\, B \in\beta\}$,
where the ordering of indices is given by the lexicographical order
on the indices of $\alpha$ and $\beta$ $(\tilde{\alpha}\vee\tilde{\beta}
=\{A \cap B : \:A\in\tilde{\alpha},\,B \in\tilde{\beta}\})$.
It is convenient to introduce the notations $\D_{k}=\{\alpha \in\D : \:|\alpha|=k\}$
and $\tilde{\D_{k}}=\{\alpha\in\D : \:|\tilde{\alpha}|=k\}$. There
is a natural $G$-action on $\D$ $(\tilde{\D})$ given by $g({A}_{1},{A}_{2},\ldots,{A}_{m})=(g({A}_{1}),g({A}_{2}),\ldots,g({A}_{m}))$
($g\{{A}_{1},{A}_{2},\ldots,{A}_{m}\}=\{g({A}_{1}),g({A}_{2}),\ldots,g({A}_{m})\}$).
Let $S_{k}$ denote the group of permutations of $\{1,\ldots,k\}$.
$S_{k}$ acts naturally on $\D_{k}$ by $\sigma({B}_{1},{B}_{2},\ldots,{B}_{k})=({B}_{\sigma(1)},{B}_{\sigma(2)},\ldots,{B}_{\sigma(k)})$.
This action
%%
% permutes
commutes
with the action of $G$, i.e. $\sigma g{\beta}=g\sigma{\beta}$
for any $\sigma\in S_{k}$ and $g\in G$. Notice one can identify
$\tilde{\D_{k}}=\D_{k}/S_{k}$.

For $\alpha=({A}_{1},{A}_{2},\ldots,{A}_{m})\in\D$
and $\gamma=(C_{1},\ldots,C_{k})\in\Pi\binom{m}{k}$ define the \textbf{amalgamated
clopen cover }$\alpha_{\gamma}=({G}_{1},{G}_{1},\ldots,{G}_{k})$,
where ${G}_{j}=\bigcup_{i\in C_{j}}{A}_{i}$. Notice that
$(\alpha_{\gamma})_{\beta}={\alpha}_{\gamma_{\beta}}.$

\subsection{Partition Homogeneity}

The following definition was introduced in \cite{GG12}:
\begin{defn}
A zero-dimensional Hausdorff space $X$ is called \textbf{partition-homogeneous}
if for every two finite ordered clopen partitions of the same cardinality,
$\alpha, \beta\in\D_{m}$, $\alpha=(A_{1},A_{2},\ldots,A_{m})$,
$\beta=(B_{1},B_{2},\ldots,B_{m})$ there is
$h\in Homeo(X)$ such that
$hA_{i}=B_{i}$, $i=1,\ldots,k$.
\end{defn}
In \cite{GG12} we proved:
\begin{prop}
\label{pro:h_hom_eq_part_hom}Let $X$ be a zero-dimensional compact
Hausdorff space. If $X$ is h-homogeneous
then $X$ is partition-homogeneous.
\end{prop}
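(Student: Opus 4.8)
The plan is to exploit h-homogeneity on each block of the partition separately and then glue the resulting homeomorphisms along the clopen decomposition. Let $\alpha=(A_1,\ldots,A_m)$ and $\beta=(B_1,\ldots,B_m)$ be two ordered clopen partitions of $X$ into $m$ non-empty blocks. First I would note that each block $A_i$, and likewise each $B_i$, is a non-empty clopen subset of $X$; so by h-homogeneity there are homeomorphisms $A_i\cong X$ and $B_i\cong X$. Composing these gives, for each $i$, a homeomorphism $h_i\colon A_i\to B_i$.

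Next I would assemble a single map $h\colon X\to X$ blockwise by setting $h|_{A_i}=h_i$. Since the $A_i$ are pairwise disjoint and cover $X$, the map $h$ is well defined; since the $B_i$ are pairwise disjoint and cover $X$ and each $h_i$ is a bijection onto $B_i$, the map $h$ is a bijection of $X$ with $h(A_i)=B_i$ for every $i$. It then remains only to verify that $h$ is a homeomorphism, and that it respects the ordering, which is automatic because we matched $A_i$ with $B_i$ for the same index $i$.

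The one place where the hypothesis is genuinely used twice is worth isolating. The identification $A_i\cong X\cong B_i$ is exactly h-homogeneity applied to the (non-empty) blocks, and the continuity of the glued map is where the \emph{clopen} (rather than merely closed, or open) nature of the partition matters: because $\{A_1,\ldots,A_m\}$ is a \emph{finite} cover of $X$ by clopen sets, a map on $X$ is continuous as soon as each of its restrictions to the $A_i$ is continuous (the pasting lemma, valid here since each $A_i$ is open). As each $h_i$ is continuous, so is $h$; symmetrically, $h^{-1}$ restricts to the continuous $h_i^{-1}$ on the clopen block $B_i$, so $h^{-1}$ is continuous as well. Hence $h\in Homeo(X)$ with $hA_i=B_i$ for all $i$, which is precisely partition-homogeneity. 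I do not expect any real obstacle beyond bookkeeping these gluing details; the proof is essentially a direct unwinding of the two definitions.
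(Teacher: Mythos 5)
Your proof is correct. The paper does not actually reproduce a proof of this proposition (it is quoted from the companion paper \cite{GG12}), and the argument intended there is exactly your blockwise gluing: h-homogeneity identifies each non-empty clopen block $A_i$ and $B_i$ with $X$, composition gives homeomorphisms $h_i\colon A_i\to B_i$, and pasting along the finite clopen partition yields $h\in Homeo(X)$ with $hA_i=B_i$ for all $i$, so your proposal matches the standard proof of this fact.
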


\subsection{Signatures and Induced patterns }

For every $n\in\mathbb{N}$ let $E_{n}=Exp(Exp(X))^{n}$ equipped
with the Vietoris topology.
\begin{defn}
Let $\xi\in E_{n}$ and $\alpha \in\D$ with $m$ elements $\alpha=(A_{1},A_{2},\ldots,{A}_{m})$.
Define the
%%ee
\textbf{$(\alpha,\xi)$-induced $m_{n}$-pattern}
$\Pcal_{(\alpha,\xi)}=\{P_{(\alpha,F)} : F\in\xi\}$, where
$P_{(\alpha,F)}=\{(j_{1},j_{2},\ldots,j_{n}) : \, A_{j_{1}}\times A_{j_{2}}\times\cdots\times A{}_{j_{n}}\cap F\ne\emptyset\}$.
Inversely to an $m_{n}$-pattern $\Pcal$ and clopen partition $\alpha$
one associates $\xi_{(\alpha,\Pcal)}=\{F_{(\alpha,P)} : \, P\in\Pcal\}\in E_{n}$,
where \[
F_{(\alpha,P)}=\cup_{(j_{1},j_{2},\ldots,j_{n})\in P}A_{j_{1}}\times A_{j_{2}}\times\cdots\times  A{}_{j_{n}}\]

Denote $\xi_{\alpha}=\xi_{(\alpha,\Pcal_{(\alpha,\xi)})}$.
It is easy to see:.

\[
\xi_{\alpha}=\{F_{\alpha} : \, F\in\xi\}\]

where \[
F_{\alpha}=\cup_{A_{j_{1}}\times A_{j_{2}}\times\cdots\times A{}_{j_{n}}\cap F\neq\emptyset}A_{j_{1}}\times A_{j_{2}}\times\cdots\times A{}_{j_{n}}\]

The \textbf{signature }of $\xi$ is defined to be the net in
%%e
%$X$,
$E_n$,
$sig(\xi)=(\xi_{\alpha})_{\alpha \in\D}$. \end{defn}

\begin{lem}
\label{lem:Pattern_Transformation}Let $\xi\in E_{n}$, $\alpha \in\D_{m}$,
$\sigma\in S_{m}$ and $g\in G$ then \end{lem}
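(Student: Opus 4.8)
The plan is to prove the two transformation identities
$$
\Pcal_{(\sigma\alpha,\xi)}=\sigma^{-1}\Pcal_{(\alpha,\xi)}\qquad\text{and}\qquad\Pcal_{(g\alpha,g\xi)}=\Pcal_{(\alpha,\xi)}
$$
by a direct unwinding of the definition of the induced pattern. Since $\Pcal_{(\alpha,\xi)}=\{P_{(\alpha,F)}:F\in\xi\}$ is assembled out of the individual vectors $P_{(\alpha,F)}$, and both $S_{m}$ and $G$ act on the pattern solely through their action on those indexing vectors, it suffices to fix a single $F=(F^{1},\dots,F^{n})\in\xi$, verify the corresponding identity for $P_{(\alpha,F)}$, and then take the union over $F\in\xi$. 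Throughout I use the identification of the tuple $F$ with the product $F^{1}\times\cdots\times F^{n}$, under which $A_{j_{1}}\times\cdots\times A_{j_{n}}\cap F\neq\emptyset$ is equivalent to $A_{j_{i}}\cap F^{i}\neq\emptyset$ for every $1\le i\le n$, so that $P_{(\alpha,F)}$ factors coordinatewise into the one-dimensional sets $P^{i}_{(\alpha,F)}=\{j:A_{j}\cap F^{i}\neq\emptyset\}$.

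For the $S_{m}$-part I fix $F$ and compute the $i$-th coordinate of $P_{(\sigma\alpha,F)}$ directly. Since $\sigma\alpha=(A_{\sigma(1)},\dots,A_{\sigma(m)})$, the $j$-th block of $\sigma\alpha$ is $A_{\sigma(j)}$, so that
$$
\{j:A_{\sigma(j)}\cap F^{i}\neq\emptyset\}=\{j:\sigma(j)\in P^{i}_{(\alpha,F)}\}=\sigma^{-1}\bigl(P^{i}_{(\alpha,F)}\bigr).
$$
Running this over all $i$ gives $P_{(\sigma\alpha,F)}=\sigma^{-1}P_{(\alpha,F)}$, and unioning over $F\in\xi$ yields $\Pcal_{(\sigma\alpha,\xi)}=\sigma^{-1}\Pcal_{(\alpha,\xi)}$. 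The one point requiring care is the bookkeeping of the permutation convention (whether the relabelling is by $\sigma$ or $\sigma^{-1}$), which is forced by the convention $\sigma(B_{1},\dots,B_{m})=(B_{\sigma(1)},\dots,B_{\sigma(m)})$ fixed earlier.

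For the $G$-part I again fix $F$ and use that $g\in G$ is a bijection of $X$, so $gA_{j}\cap gF^{i}=g(A_{j}\cap F^{i})$ and hence $gA_{j}\cap gF^{i}\neq\emptyset$ if and only if $A_{j}\cap F^{i}\neq\emptyset$. Thus the defining condition for $P_{(g\alpha,gF)}$ agrees coordinatewise with that for $P_{(\alpha,F)}$, giving $P_{(g\alpha,gF)}=P_{(\alpha,F)}$; since $g\xi=\{gF:F\in\xi\}$, unioning over $F$ gives $\Pcal_{(g\alpha,g\xi)}=\Pcal_{(\alpha,\xi)}$, i.e. the induced pattern is a $G$-invariant of the pair $(\alpha,\xi)$. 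I expect no genuine obstacle in either computation: the whole content is the passage through the product identification together with the careful tracking of the two action conventions, after which both identities drop out of the definitions.
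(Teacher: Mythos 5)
Your proof is correct and follows essentially the same route as the paper's: both arguments fix a single $F\in\xi$ and verify, directly from the definition of $P_{(\alpha,F)}$, that $g$ being a bijection preserves the nonemptiness of the intersections (giving $G$-invariance) and that relabelling the blocks by $\sigma$ turns membership of $(j_1,\dots,j_n)$ into membership of the $\sigma$-image, yielding the $\sigma^{-1}$ identity. Your coordinatewise factorization through the sets $P^{i}_{(\alpha,F)}$ is just a slightly more explicit bookkeeping of the same computation the paper performs on index tuples.
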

\begin{enumerate}
\item $\Pcal_{(\alpha,\xi)}=\Pcal_{(g\alpha,g\xi)}$.
\item $\sigma^{-1}\Pcal_{(\alpha,\xi)}=\Pcal_{(\sigma\alpha,\xi)}$\end{enumerate}
\begin{proof}
Let $\alpha=(A_{1},A_{2}\ldots,A_{m})$.\end{proof}
\begin{enumerate}
\item Notice that for any $F\in\xi$, $F\cap A_{j_{1}} \times A_{j_{2}}\times\cdots\times A{}_{j_{n}}\neq\emptyset\Leftrightarrow g(F)\cap gA_{j_{1}}\times gA_{j_{2}}\times\cdots\times gA{}_{j_{n}}\neq\emptyset$.
\item Notice that for any $F\in\xi$, $F\cap{A}_{\sigma(j_{1})}\times A_{j_{2}}\times\cdots\times A{}_{\sigma(j_{n})}\neq\emptyset\Leftrightarrow(j_{1},j_{2},\ldots,j_{n})\in\sigma^{-1}P_{({\alpha},F)}$.
\end{enumerate}

\subsection{The topology of $E_{n}$}

%%e
Recall that for a topological space $K$, a basis for the Vietoris topology on
$Exp(K)$ is given by the collection of subsets of the form
$$
\langle V_1, V_2, \dots, V_k \rangle =
\{F \in Exp(K) : \ F \subset \cup_{j=1}^k V_j,\ {\text{\rm and}}\
F \cap V_j \ne \emptyset\ \ \forall \ 1 \le j \le k\},
$$
where $V_1,\dots,V_k$ are open subsets of $K$.

\begin{lem}\label{basis}
For $\alpha=(A_{1},A_{2},\ldots,A_{m})$ let
%\[
%UE_{n}(\alpha)=
%\{\Pi_{i=1}^{n}<{{A}}_{j_{1}^{i}},{{A}}_{j_{2}^{i}},\ldots,{{A}}_{j_{k_{i}}^{i}}>| \ \forall i,\, k_{i}\in\mathbb{N},\,(\forall r,s)\, j_{r}^{s}\in\vec{{m}},(\forall s,r\neq r')\, j_{r}^{s}\neq j_{r'}^{s}\}\]
%
%%
\[
UE_{n}(\alpha)=
\{\Pi_{i=1}^{n}
\langle A_{j_{1}^{i}},A_{j_{2}^{i}},\ldots,A_{j_{k_{i}}^{i}} \rangle :
\ \forall i,  \forall r,\, k_{i},  j_{r}^{i}\in\vec{{m}},(\forall s,r\neq r')\, j_{r}^{s}\neq j_{r'}^{s}\}\]
and
\[
\mathcal{B}{}_{E_{n}}(\alpha)=\{\langle\Ucal_{1},\dots,\Ucal_{l}\rangle : \ \forall i,\,\Ucal_{i}\in UE_{n}(\alpha)\}.\]
Define $\mathcal{B}{}_{E_{n}}=\bigcup_{\alpha\in\D}\mathcal{B}{}_{E_{n}}({\alpha})$.
Then $\mathcal{B}{}_{E_{n}}$ is a basis for $E_{n}.$\end{lem}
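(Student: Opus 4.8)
The plan is to establish the lemma by a two-level Vietoris basis argument, exploiting the zero-dimensionality of $X$ to work entirely with clopen partitions. Recall the standard fact that if $\mathcal{O}$ is any open basis for a compact Hausdorff space $K$, then the Vietoris sets $\langle O_{1},\dots,O_{l}\rangle$ with $O_{1},\dots,O_{l}\in\mathcal{O}$ form a basis for $Exp(K)$. Applying this with $K=(Exp(X))^{n}$, it suffices to produce, inside each Vietoris-basic neighbourhood $\langle O_{1},\dots,O_{l}\rangle$ of a given $\xi\in E_{n}$, a set of the form $\langle\Ucal_{1},\dots,\Ucal_{s}\rangle$ with all $\Ucal_{i}$ drawn from a single $UE_{n}(\alpha)$ and still containing $\xi$. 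Thus the whole content is to pass to one common partition $\alpha$.

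The key structural observation is that for a fixed $\alpha=(A_{1},\dots,A_{m})\in\D_{m}$ the family $UE_{n}(\alpha)$ is a finite clopen partition of $K=(Exp(X))^{n}$. Indeed, for a single coordinate the sets $\langle A_{j_{1}},\dots,A_{j_{k}}\rangle$, as $\{j_{1},\dots,j_{k}\}$ ranges over the nonempty subsets of $\vec{m}$, are pairwise disjoint, clopen (the $A_{j}$ being clopen) and cover $Exp(X)$, since a nonempty closed $F$ lies in exactly the cell indexed by $\{j:F\cap A_{j}\neq\emptyset\}$; taking products over the $n$ coordinates yields a finite clopen partition of $K$ whose cells are precisely the members of $UE_{n}(\alpha)$. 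Moreover these partitions are compatible with refinement: if $\beta\succeq\alpha$ then $UE_{n}(\beta)$ refines $UE_{n}(\alpha)$ (the $\alpha$-cell met by $F$ is determined by the $\beta$-cell met by $F$), so $\{UE_{n}(\alpha)\}_{\alpha\in\D}$ is directed by refinement. Finally, using that clopen Vietoris cells form a basis for $Exp(X)$ — given $F\in\langle V_{1},\dots,V_{k}\rangle$ with $V_{i}$ clopen, the partition generated by the $V_{i}$ exhibits a cell $\langle A_{S}\rangle\ni F$ contained in $\langle V_{1},\dots,V_{k}\rangle$ — and passing to products, $\bigcup_{\alpha\in\D}UE_{n}(\alpha)$ is an open basis for $K$.

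With this in hand the lemma follows from a compactness-and-refinement argument. Fix $\xi\in E_{n}$ and a Vietoris-basic neighbourhood $\langle O_{1},\dots,O_{l}\rangle$. For each $p\in\xi$ choose, using that the cells form a basis, a partition whose cell at $p$ lies inside some $O_{i}$; finitely many such cells cover the compact set $\xi$. Likewise, for each $i$ pick $q_{i}\in\xi\cap O_{i}$ and a partition whose cell at $q_{i}$ lies inside $O_{i}$. Let $\alpha\in\D$ be a common refinement of all these finitely many partitions (possible since $\D$ is directed), and let $\Ucal_{1},\dots,\Ucal_{s}$ be those cells of $UE_{n}(\alpha)$ that meet $\xi$. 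Since $UE_{n}(\alpha)$ is a partition, $\xi\subset\bigcup_{j}\Ucal_{j}$ and $\xi\cap\Ucal_{j}\neq\emptyset$ for every $j$, so $\xi\in\langle\Ucal_{1},\dots,\Ucal_{s}\rangle\in\mathcal{B}_{E_{n}}(\alpha)$. For the reverse inclusion $\langle\Ucal_{1},\dots,\Ucal_{s}\rangle\subset\langle O_{1},\dots,O_{l}\rangle$ one checks the two Vietoris conditions: every $\Ucal_{j}$ meeting $\xi$ is contained in some $O_{i}$ (it sits inside the cell at one of the chosen points $p$, hence inside its $O_{i}$), giving $\bigcup_{j}\Ucal_{j}\subset\bigcup_{i}O_{i}$; and for each $i$ the cell at $q_{i}$ is one of the $\Ucal_{j}$ and lies in $O_{i}$, so any $\eta$ meeting all the $\Ucal_{j}$ meets $O_{i}$.

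The one genuine obstacle is exactly the requirement, built into the definition of $\mathcal{B}_{E_{n}}(\alpha)$, that all the $\Ucal_{i}$ come from the \emph{same} partition $\alpha$; this is what forces the detour through the observation that each $UE_{n}(\alpha)$ is itself a finite clopen partition and that these partitions form a directed family, so that finitely many refinement demands can be met simultaneously by a single $\alpha$. Once that is set up, the verification of the two Vietoris inclusion conditions is routine and I would not belabour it.
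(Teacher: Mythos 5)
Your proof is correct and takes essentially the same route as the paper's, whose entire proof is the one-line remark that the Vietoris sets $\langle\Ucal_{1},\dots,\Ucal_{l}\rangle$ with the $\Ucal_{i}$ drawn from a fixed basis of $(Exp(X))^{n}$ form a basis of $E_{n}$. The details you supply --- that each $UE_{n}(\alpha)$ is a finite clopen partition of $(Exp(X))^{n}$, that these partitions are directed under refinement, and the compactness argument forcing all the $\Ucal_{i}$ into a \emph{single} $UE_{n}(\alpha)$ --- are precisely what the paper leaves to the reader, including the one genuinely nontrivial point (the common $\alpha$) that its one-liner glosses over.
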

\begin{proof}
A basis for the Vietoris topology of $E_{n}$ is given by $\mathfrak{U}=\langle\Ucal_{1},\dots,\Ucal_{l}\rangle$,
where $\Ucal_{i}$ belong to a fixed basis in $(Exp(X))^{n}.$\end{proof}
\begin{rem}
\label{rem:disjointness of UE(alpha)}Notice that
%%e
% the
distinct members of $UE_{n}(\alpha)$
are disjoint and that for each $F=(F_{1,}F_{2},\ldots,F_{n})\in(ExpX)^{n}$,
there
%%e
exists
a unique member $\Ucal\in UE_{n}(\alpha),$ so that
%%e
% $F\in U$.
$F\in \Ucal$.
Denote $\Ucal=UE_{n}({\alpha})[F]$.
\end{rem}

%%e
\begin{defn}\label{def:U(xi,alpha)}
For $\alpha=(A_1,\ldots,A_m) \in \mathcal{D}$ and
%%e ???
%%y What is not clear?
% $\pcal\in C(|\alpha|)$,
$\pcal \in C_n(m)$
with $\pcal =\{P_s=(P^1_s, P_s^2, \cdots, P_s^n):
s=1,2,\dots,r\}$ and $P^i_s = \{j^i_{s,1},j^i_{s,2},\dots, j^i_{s,k(i,s)}\}$
%%e  ???
%%y What is not clear?
let
\begin{align*}
UE_n(\alpha)[\pcal] = &
\langle
\Pi_{i=1}^{n}\langle {A}_{j^i_{1,1}}, {A}_{j^i_{1,2}},\dots, {A}_{j^i_{1,k(i,1)}}
\rangle,
\Pi_{i=1}^{n}\langle {A}_{j^i_{2,1}}, {A}_{j^i_{2,2}},\dots, {A}_{j^i_{2,k(i,2)}}
\rangle,\\
&  \dots,
\Pi_{i=1}^{n}\langle {A}_{j^i_{r,1}}, {A}_{j^i_{r,2}},\dots, {A}_{j^i_{r,k(i,r)}}
\rangle
\rangle.
\end{align*}

%\langle\langle
%\Pi_{i=1}^{n}{A}_{j_{i}}
%\rangle_{(j_{1},j_{2},\ldots,j_{n})\in P}\rangle_{P\in\pcal}$.

Define $U(\xi,\alpha)=
%%e
UE_n(\alpha)[\pcal_{(\alpha,\xi)}]$.
\end{defn}

\begin{prop}
\label{pro:closeness_equivalences}Let $\alpha\in\dcal$. Let
$\xi,\xi'\in E_{n}$. The following statements are equivalent:\end{prop}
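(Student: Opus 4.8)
The plan is to show that every condition in the list is equivalent to the single combinatorial equality $\Pcal_{(\alpha,\xi)} = \Pcal_{(\alpha,\xi')}$, so that the whole chain of equivalences collapses to a statement about induced patterns. This is the natural hub because both the rounded configuration $\xi_{\alpha} = \xi_{(\alpha,\Pcal_{(\alpha,\xi)})}$ and the basic neighbourhood $U(\xi,\alpha) = UE_{n}(\alpha)[\Pcal_{(\alpha,\xi)}]$ depend on $\xi$ only through $\Pcal_{(\alpha,\xi)}$; hence the implications from pattern-equality to equality of roundings and to equality of neighbourhoods are immediate from Definition \ref{def:U(xi,alpha)} and the definition of $\xi_{\alpha}$. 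All the content lies in the converse directions, and the engine for every one of them is Remark \ref{rem:disjointness of UE(alpha)}: the members of $UE_{n}(\alpha)$ are pairwise disjoint clopen subsets covering $(Exp(X))^{n}$, and each point $F$ lies in a unique cell $UE_{n}(\alpha)[F]$, which is precisely the cell $\Ucal_{P_{(\alpha,F)}}$ indexed by the pattern-element of $F$.

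First I would record the bookkeeping fact that the correspondence $P \leftrightarrow \Ucal_{P}$ between pattern-elements and cells of $UE_{n}(\alpha)$ is a bijection, and that it is pattern-faithful: $P_{(\alpha,F)} = P \iff F \in \Ucal_{P}$. This is a direct reading of Definition \ref{def:U(xi,alpha)} together with the disjointness in Remark \ref{rem:disjointness of UE(alpha)}. The key step is then the computation of Vietoris membership. Writing $\Pcal_{(\alpha,\xi)} = \{P_{1},\dots,P_{r}\}$ so that $U(\xi,\alpha) = \langle \Ucal_{P_{1}},\dots,\Ucal_{P_{r}}\rangle$, I would unpack the two defining clauses of the basic set from Lemma \ref{basis}. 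The containment clause $\xi' \subset \bigcup_{s} \Ucal_{P_{s}}$ says that every $F' \in \xi'$ sits in one of these cells, i.e. $P_{(\alpha,F')} \in \Pcal_{(\alpha,\xi)}$, which is exactly $\Pcal_{(\alpha,\xi')} \subseteq \Pcal_{(\alpha,\xi)}$; the meeting clause $\xi' \cap \Ucal_{P_{s}} \ne \emptyset$ for every $s$ says that each $P_{s}$ is realised by some $F' \in \xi'$, giving $\Pcal_{(\alpha,\xi)} \subseteq \Pcal_{(\alpha,\xi')}$. Thus $\xi' \in U(\xi,\alpha) \iff \Pcal_{(\alpha,\xi)} = \Pcal_{(\alpha,\xi')}$, and as the right-hand side is symmetric in $\xi,\xi'$ this simultaneously yields the analogous statement $\xi \in U(\xi',\alpha)$.

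It remains to close the neighbourhood-equality and rounding conditions. Since $\xi \in U(\xi,\alpha)$ always holds (the $\xi'=\xi$ instance of the criterion), the equality $U(\xi,\alpha) = U(\xi',\alpha)$ forces $\xi \in U(\xi',\alpha)$ and hence pattern-equality, while the forward implication is the trivial dependence noted above. For the rounding, disjointness of the cells means $F_{\alpha}$ is the union of exactly the cells $A_{j_{1}}\times\cdots\times A_{j_{n}}$ meeting $F$, so $F_{\alpha}$ determines and is determined by $P_{(\alpha,F)}$; consequently $\Pcal_{(\alpha,\xi_{\alpha})} = \Pcal_{(\alpha,\xi)}$ and the map $\Pcal \mapsto \xi_{(\alpha,\Pcal)}$ is injective, so $\xi_{\alpha} = \xi'_{\alpha}$ implies $\Pcal_{(\alpha,\xi)} = \Pcal_{(\alpha,\xi')}$. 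I expect the only delicate point to be the faithful translation of the two Vietoris clauses into the two set-inclusions of patterns; once the disjointness and exhaustiveness of the cells of $UE_{n}(\alpha)$ from Remark \ref{rem:disjointness of UE(alpha)} are invoked, everything else is formal.
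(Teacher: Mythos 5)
Your proposal is correct and follows essentially the same route as the paper, which simply declares $(1)\Leftrightarrow(2)\Leftrightarrow(3)\Leftrightarrow(4)$ to be consequences of Definition \ref{def:U(xi,alpha)} and notes that $(4)\Leftrightarrow(5)$ follows from $\xi_{\alpha}=\xi_{(\alpha,\Pcal_{(\alpha,\xi)})}$. You have merely filled in the routine details the paper leaves implicit -- the hub at condition (4), the translation of the two Vietoris clauses via the disjointness of the cells of $UE_{n}(\alpha)$, and the injectivity of $\Pcal\mapsto\xi_{(\alpha,\Pcal)}$ -- all of which are exactly what the paper's appeal to the definitions amounts to.
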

\begin{enumerate}
\item $U(\xi,\alpha)=U(\xi',{\alpha})$.
\item $\xi\in U(\xi',\alpha)$ or $\xi'\in U(\xi,\alpha)$.
\item $UE(\alpha)[\pcal_{(\alpha,\xi)}]=UE(\alpha)[\pcal_{(\alpha,\xi')}]$.
\item $\pcal_{(\alpha,\xi)}=\pcal_{(\alpha,\xi')}$.
\item $\xi_{\alpha}=\xi'_{\alpha}$.\end{enumerate}
\begin{proof}
The equivalences $(1)\Leftrightarrow(2)\Leftrightarrow(3)\Leftrightarrow(4)$
follow from Definition \ref{def:U(xi,alpha)}.
Recall $\xi_{\alpha}=\xi_{(\alpha,\Pcal_{(\alpha,\xi)})}$
which implies $(4)\Rightarrow(5)$. The direction $(4)\Leftarrow(5)$
is trivial.\end{proof}
\begin{lem}
\label{lem:sig(xi)=00003D00003Dxi}Let $\xi\in E_{n}$, then $\lim sig(\xi)=\xi$
and $\{\xi\}=\bigcap_{\alpha\in\D}U(\xi,{\alpha})$.\end{lem}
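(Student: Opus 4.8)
The plan is to reduce both assertions to a single structural fact: that the sets $\{U(\xi,\alpha)\}_{\alpha\in\D}$ form a neighborhood basis at $\xi$ in $E_n$. Throughout I would use Proposition \ref{pro:closeness_equivalences} to move freely between the topological statement $\eta\in U(\xi,\alpha)$, the combinatorial statement $\Pcal_{(\alpha,\xi)}=\Pcal_{(\alpha,\eta)}$, and the geometric statement $\xi_\alpha=\eta_\alpha$. In particular, applying the proposition with $\eta=\xi$ (condition (5) holds trivially) shows $\xi\in U(\xi,\alpha)$ for every $\alpha\in\D$, so each $U(\xi,\alpha)$ is genuinely an open neighborhood of $\xi$.

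The first main step is to show these neighborhoods are cofinal among all neighborhoods. Let $V$ be open with $\xi\in V$. By Lemma \ref{basis} there is a basic set $W\in\mathcal{B}_{E_{n}}(\beta)$ for some $\beta\in\D$ with $\xi\in W\subset V$; write $W=\langle\Ucal_1,\dots,\Ucal_l\rangle$ with each $\Ucal_i\in UE_{n}(\beta)$. By Remark \ref{rem:disjointness of UE(alpha)} the members of $UE_{n}(\beta)$ are pairwise disjoint and each $F\in(Exp(X))^n$ lies in a unique member $UE_{n}(\beta)[F]$, so the Vietoris condition $\xi\in W$ says exactly that $\{UE_{n}(\beta)[F]:F\in\xi\}=\{\Ucal_1,\dots,\Ucal_l\}$: every $F\in\xi$ falls into one of the $\Ucal_i$, and each $\Ucal_i$ is met by some $F\in\xi$. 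Since the member $UE_{n}(\beta)[F]$ is precisely the cell associated with the pattern element $P_{(\beta,F)}$ in Definition \ref{def:U(xi,alpha)}, this collection is the one defining $U(\xi,\beta)=UE_{n}(\beta)[\Pcal_{(\beta,\xi)}]$, whence $U(\xi,\beta)=\langle\Ucal_1,\dots,\Ucal_l\rangle=W\subset V$. Thus every neighborhood of $\xi$ contains some $U(\xi,\beta)$. As $E_n$ is compact Hausdorff, the intersection of all open neighborhoods of $\xi$ is $\{\xi\}$, and a neighborhood basis has the same intersection; this yields the second assertion $\{\xi\}=\bigcap_{\alpha\in\D}U(\xi,\alpha)$.

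For the convergence $\lim sig(\xi)=\xi$ it now suffices, by the basis property, to show $\xi_\alpha\in U(\xi,\beta)$ whenever $\alpha\succeq\beta$. By Proposition \ref{pro:closeness_equivalences} this is the same as $(\xi_\alpha)_\beta=\xi_\beta$, and since $\xi_\alpha=\{F_\alpha:F\in\xi\}$ and $\xi_\beta=\{F_\beta:F\in\xi\}$, it reduces coordinatewise to the claim that $(S^\alpha)^\beta=S^\beta$ for every closed $S\subset X$, where $S^\gamma$ denotes the union of the blocks of $\gamma$ meeting $S$. This is elementary: when $\alpha$ refines $\beta$, any block $A$ of $\alpha$ that meets a block $B$ of $\beta$ is contained in $B$, so a $\beta$-block meets $S^\alpha$ if and only if it meets $S$, giving $(S^\alpha)^\beta=S^\beta$. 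Hence $\xi_\alpha\in U(\xi,\beta)$ for all $\alpha\succeq\beta$, so the net $(\xi_\alpha)_{\alpha\in\D}$ is eventually inside each basic neighborhood $U(\xi,\beta)$ and therefore converges to $\xi$.

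I expect the one delicate point to be the identification $W=U(\xi,\beta)$ in the second paragraph. Everything there hinges on Remark \ref{rem:disjointness of UE(alpha)}, namely that $UE_{n}(\beta)$ partitions $(Exp(X))^n$: this is what forces the collection of cells occurring in an \emph{arbitrary} basic neighborhood of $\xi$ to be canonically pinned down by $\xi$ itself, rather than being a free parameter. Once this bookkeeping is settled, the basis property and the refinement-stability of saturation are both routine, and the two conclusions follow as above.
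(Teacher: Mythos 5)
Your proof is correct, and its core is the same as the paper's: both reduce the convergence statement, via Proposition \ref{pro:closeness_equivalences}, to the identity $(\xi_{\alpha})_{\beta}=\xi_{\beta}$ for $\alpha\succeq\beta$, which the paper dismisses as trivial and you verify by the refinement-saturation argument. The differences are worth noting, and both favor your write-up in completeness. First, the paper's proof jumps from ``$\xi_{\beta}\in U(\xi,\alpha)$ for all $\beta\succeq\alpha$'' to ``the net converges to $\xi$''; this inference is valid only because the sets $U(\xi,\alpha)$ form a neighborhood basis at $\xi$, a fact the paper uses silently. Your second paragraph supplies exactly this missing step, showing via Lemma \ref{basis} and Remark \ref{rem:disjointness of UE(alpha)} that any basic neighborhood $W\in\mathcal{B}_{E_n}(\beta)$ containing $\xi$ must literally coincide with $U(\xi,\beta)$ (the disjointness of the cells in $UE_n(\beta)$ pins down the bracket's entries as $\{UE_n(\beta)[F]:F\in\xi\}$). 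Second, for the intersection statement the paper argues through the first assertion: any $\xi'\in\bigcap_{\alpha}U(\xi,\alpha)$ has $sig(\xi')=sig(\xi)$, so by part one and uniqueness of limits $\xi'=\xi$; you instead get it directly from the basis property plus the $T_1$ separation of $E_n$ (your appeal to compactness is unnecessary, Hausdorff alone suffices), so your two assertions are logically independent of one another while the paper's second depends on its first. One small imprecision: for general $n$ the reduction is not to closed $S\subset X$ but to closed $S\subset X^n$ saturated with respect to the product partition $\{A_{j_1}\times\cdots\times A_{j_n}\}$; since $\alpha\succeq\beta$ in $\D$ implies the corresponding product partitions refine, your argument transfers verbatim, but it should be stated that way.
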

\begin{proof}
We will show that for any $\alpha\in\D$, $\xi_{{\beta}}\in U(\xi,\alpha)$
for all $\beta\succeq \alpha$. This implies that the limit
of the net $(\xi_{\alpha})_{\alpha\in\D}$ is $\xi$.
Fix $\beta=({{B}_{0},}{{B}_{2},}\ldots,{{B}}_{m-1})\succeq \alpha$.
Acording to Proposition \ref{pro:closeness_equivalences} it is enough
to show $(\xi_{{\mathbf{\beta}}})_{ \alpha} =\xi_{{\mathbf \alpha}}$.
However as $ \beta \succeq \alpha $ this is trivial. In order
to prove the second statement of the lemma notice that trivially $\xi\in\bigcap_{\alpha\in\D}U(\xi, \alpha )$.
If $\xi'\in\bigcap_{ \alpha \in\D}U(\xi,\alpha)$ then $\lim sig(\xi')=\lim sig(\xi)$
which implies by the first statement of the lemma $\xi=\xi'$.
\end{proof}

\subsection{Hereditary stable signatures}

\noindent Recall that $HSP_{n}(m)$, the collection of hereditary $m$-stable $m_{n}$-patterns was introduced in Definition \ref{def:hereditary patterns}. Notice there is a natural action of $S_{m}$ on $C_{n}(m)$.
\begin{defn}
Let $\xi\in E_{n}$. The signature $sig(\xi)$ is called \textbf{hereditary
stable} if for all $\alpha\in\D$, there exists $\sigma\in S_{|\alpha|}$
so that $\pcal_{(\sigma(\alpha),\xi)}\in HSP_{n}(m)$. \end{defn}
\begin{lem}
\label{lem:stability_permutation_conserved}Let $\xi\in E_{n}$. Let
$\beta\in\D$ with $|\beta|=m$. Assume \textup{$\Pcal_{(\beta,\xi)}\in HSP_{n}(m)$.}
Let $\alpha \in\D$ with $ \alpha \preceq \beta$ and
$| \alpha |=p$, then there exists $\sigma\in S_{m}$ so that
$\pcal_{(\sigma( \alpha ),\xi)}\in HSP_{n}(p)$. \end{lem}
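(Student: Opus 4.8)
The plan is to reduce the lemma to a purely combinatorial statement about induced patterns and then to prove that statement using only the amalgamation identity $(\Pcal_\gamma)_\beta=\Pcal_{\gamma_\beta}$ together with the definitions of stability (Definition \ref{SP}) and heredity (Definition \ref{def:hereditary patterns}). In particular the argument works for every $n$ and does not rely on the explicit list of Theorem \ref{thm:the only hereditary}.

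First I would make the passage from the clopen partitions to the combinatorial setting explicit. Write $\beta=(B_1,\dots,B_m)$ and $\alpha=(A_1,\dots,A_p)$. Since $\alpha\preceq\beta$, each $A_i$ is a union of blocks of $\beta$, so $C_i=\{j\in\vec{m}:B_j\subset A_i\}$ defines an ordered partition $\delta=(C_1,\dots,C_p)$ of $\vec{m}$ (in general not naturally ordered). Directly from the definition of the induced pattern, for every $F\in\xi$ the product $A_{j_1}\times\cdots\times A_{j_n}$ meets $F$ if and only if $B_{i_1}\times\cdots\times B_{i_n}$ meets $F$ for some $i_1\in C_{j_1},\dots,i_n\in C_{j_n}$; hence $P_{(\alpha,F)}=(P_{(\beta,F)})_\delta$ and therefore $\Pcal_{(\alpha,\xi)}=(\Pcal_{(\beta,\xi)})_\delta$. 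Let $\sigma$ be the reordering of the blocks of $\alpha$ for which $\sigma(\delta)$ is naturally ordered, i.e. $\sigma(\delta)\in\Pi\binom{m}{p}$; then $\Pcal_{(\sigma(\alpha),\xi)}=(\Pcal_{(\beta,\xi)})_{\sigma(\delta)}$. Thus it suffices to prove the combinatorial claim: if $\Pcal\in HSP_n(m)$ and $\gamma\in\Pi\binom{m}{p}$, then $\Pcal_\gamma\in HSP_n(p)$, applied with $\Pcal=\Pcal_{(\beta,\xi)}$ and $\gamma=\sigma(\delta)$.

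To prove the combinatorial claim set $\Qcal=\Pcal_\gamma$; note $\Qcal$ is a nonempty $p_n$-pattern, since the blocks of any partition cover $\vec{m}$. For $p$-stability, fix $\eta\in\Pi\binom{p}{k}$ with $2\le k\le p$; the amalgamation identity gives $\Qcal_\eta=(\Pcal_\gamma)_\eta=\Pcal_{\gamma_\eta}$ with $\gamma_\eta\in\Pi\binom{m}{k}$, and since $\Pcal$ is $m$-stable this value is independent of the chosen $\Pi\binom{m}{k}$-partition, hence independent of $\eta$; so $\Qcal$ is $p$-stable. For heredity I must produce, for each $p'>p$, a $p'$-stable pattern $\mathcal{R}$ with $\Qcal=\mathcal{R}_{\gamma'}$ for all $\gamma'\in\Pi\binom{p'}{p}$. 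If $p<p'\le m$, take $\mathcal{R}=\Pcal_{\gamma''}$ for any $\gamma''\in\Pi\binom{m}{p'}$ (so $\mathcal{R}=\Pcal$ when $p'=m$); as above $\mathcal{R}$ is $p'$-stable, and for $\gamma'\in\Pi\binom{p'}{p}$ one has $\mathcal{R}_{\gamma'}=(\Pcal_{\gamma''})_{\gamma'}=\Pcal_{(\gamma'')_{\gamma'}}=\Pcal_\gamma=\Qcal$ by $m$-stability, since $(\gamma'')_{\gamma'}\in\Pi\binom{m}{p}$. If $p'>m$, heredity of $\Pcal$ furnishes a $p'$-stable pattern $\mathcal{R}$ with $\Pcal=\mathcal{R}_\eta$ for all $\eta\in\Pi\binom{p'}{m}$; for $\gamma'\in\Pi\binom{p'}{p}$ the induced pattern $\mathcal{R}_{\gamma'}$ is constant (by $p'$-stability when $p\ge2$, and trivially when $p=1$ as $\Pi\binom{p'}{p}$ is then a singleton), and evaluating on $\eta_\gamma$ for some $\eta\in\Pi\binom{p'}{m}$ gives $\mathcal{R}_{\eta_\gamma}=(\mathcal{R}_\eta)_\gamma=\Pcal_\gamma=\Qcal$, so $\mathcal{R}_{\gamma'}=\Qcal$ for every $\gamma'$. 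Hence $\Qcal\in HSP_n(p)$, and the lemma follows.

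The main obstacle is the heredity verification in the case $p'>m$: there one cannot simply induce $\Pcal$ downward but must first lift $\Pcal$ to a $p'$-stable pattern using its own heredity and only then descend, the two operations being reconciled through the amalgamation identity and the stability of the lift. The other point demanding care is the reduction step, where the amalgamation partition $\delta$ attached to $\alpha\preceq\beta$ is generally not naturally ordered and must be straightened out by the reordering $\sigma$ before the combinatorial machinery, which is stated for $\Pi\binom{m}{p}$, can be applied.
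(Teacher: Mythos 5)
Your proposal is correct and takes essentially the same route as the paper: the same index partition $\{j:\,B_j\subset A_i\}$ attached to $\alpha\preceq\beta$, the same straightening permutation to land in $\Pi\binom{m}{p}$, and the same identity $\Pcal_{(\sigma(\alpha),\xi)}=(\Pcal_{(\beta,\xi)})_{\sigma(\delta)}$ reducing the lemma to the claim that inducing along a naturally ordered partition preserves membership in $HSP_n$. The only difference is one of completeness, not of method: the paper asserts that last combinatorial implication in a single sentence, while you prove it in full via the amalgamation identity $(\Pcal_\gamma)_\eta=\Pcal_{\gamma_\eta}$, including the two heredity cases $p<p'\le m$ and $p'>m$ and the degenerate case $p=1$.
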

\begin{proof}
Denote $ \alpha =( A _{1},\dots, A _{p})$ and
$ \beta =(B_{1},\dots,B_{m})$. Define $\tilde{C_{i}}=\{j : \,B_{j}\subset A_{i}\}$,
$i=1,\ldots,p$. Let $\tilde{\gamma}=\{\tilde{C_{i}}\}_{i=1}^{p}$.
Let $\gamma=(C_{i})_{i=1}^{p}$ be a naturally ordered partition of
$\{1,\ldots,m\}$ into $p$ sets, so that there exists $\sigma\in S_{m}$
so that $C_{i}=\tilde{C}_{\sigma(i)}$, $i=1,\ldots p$. Denote $\nu=\beta_{\gamma}$.
It is easy to see that $\sigma({\nu})=\alpha$. Moreover
$\pcal_{({\nu},\xi)}=(\Pcal_{(\beta,\xi)})_{\gamma}$. The
last assertion implies $\pcal_{(\sigma(\alpha),\xi)}\in HSP_{n}(p)$. \end{proof}

%%e
Our next theorem is a crucial step that connects the combinatorial
condition of being hereditarily stable to the topological dynamical condition of
being minimal. Note that whereas our main result (Theorem \ref{main2} below)
handles the case $n=1$ only, this result applies to all $n\in \N$.
\begin{thm}
\label{thm:Minimal Space contains stable signature}Let $M\subset E_{n}$
be minimal, then for all $\xi'\in M$, $sig(\xi')$ is hereditary
stable. \end{thm}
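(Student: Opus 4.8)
The plan is to reduce, by minimality, to producing a single point of $M$ all of whose induced patterns are fully stable, and then to build such a point by feeding the dual Ramsey corollary (Corollary \ref{stable-cor}) into partition-homogeneity (Proposition \ref{pro:h_hom_eq_part_hom}) and taking a compactness limit.

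First I would set up the reduction. Call $\xi\in E_{n}$ \emph{good} if $sig(\xi)$ is hereditary stable, and write $\mathrm{Good}$ for the set of good points. Two soft observations make $\mathrm{Good}$ tractable. (a) It is $G$-invariant: since block permutations commute with the $G$-action and $\mathcal{P}_{(\beta,g\xi)}=\mathcal{P}_{(g^{-1}\beta,\xi)}$ by Lemma \ref{lem:Pattern_Transformation}(1), while $g^{-1}\alpha$ ranges over $\mathcal{D}$ as $\alpha$ does, goodness of $\xi$ and of $g\xi$ are equivalent. (b) It is closed: for fixed $\alpha$ the assignment $\xi\mapsto\mathcal{P}_{(\alpha,\xi)}$ is locally constant (Proposition \ref{pro:closeness_equivalences}), so $\{\xi:\exists\,\sigma,\ \mathcal{P}_{(\sigma\alpha,\xi)}\in HSP_{n}(|\alpha|)\}$ is a finite union of clopen cells, hence clopen, and $\mathrm{Good}$ is their intersection over all $\alpha\in\mathcal{D}$. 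Thus $\mathrm{Good}\cap M$ is closed and invariant, and by minimality of $M$ it suffices to exhibit one good point inside $M$.

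Next, fix $\xi'\in M$. For an arbitrary $\alpha\in\mathcal{D}$ with $|\alpha|=m$ I would choose $N\geq SDR(m;r_{2},\dots,r_{m-1})$ and a clopen partition $\delta$ of $X$ with $|\delta|=N$ (available since $X$ is infinite and zero-dimensional). Applying Corollary \ref{stable-cor} to the $N_{n}$-pattern $\mathcal{P}_{(\delta,\xi')}$ yields $\rho\in\Pi\binom{N}{m}$ such that $(\mathcal{P}_{(\delta,\xi')})_{\rho}$ is an $m$-stable $m_{n}$-pattern; the identity $\mathcal{P}_{(\delta_{\rho},\xi')}=(\mathcal{P}_{(\delta,\xi')})_{\rho}$ (immediate from the definitions of the amalgamated clopen cover and the induced pattern) then shows that the clopen partition $\delta_{\rho}$ has $m$ blocks and that $\mathcal{P}_{(\delta_{\rho},\xi')}$ is fully stable. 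By partition-homogeneity there is $h\in G$ with $h(\delta_{\rho})=\alpha$, so by Lemma \ref{lem:Pattern_Transformation}(1), $\mathcal{P}_{(\alpha,h\xi')}=\mathcal{P}_{(\delta_{\rho},\xi')}$ is fully stable; set $\eta_{\alpha}=h\xi'\in M$. The crucial gain here is that partition-homogeneity realizes full stability with respect to a \emph{cofinal} family of partitions, not merely at one scale.

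Finally, the net $(\eta_{\alpha})_{\alpha\in\mathcal{D}}$ lies in the compact set $M$, so a subnet converges to some $\eta\in M$. I claim every induced pattern of $\eta$ is fully stable. Fix $\alpha_{1}$: along the subnet one eventually has $\alpha\succeq\alpha_{1}$ (subnets are cofinal) and eventually $\mathcal{P}_{(\alpha_{1},\eta_{\alpha})}=\mathcal{P}_{(\alpha_{1},\eta)}$ (convergence, via Proposition \ref{pro:closeness_equivalences}); picking $\alpha$ with both, $\mathcal{P}_{(\alpha_{1},\eta_{\alpha})}=(\mathcal{P}_{(\alpha,\eta_{\alpha})})_{\gamma}$ is a coarsening of a fully stable pattern and is therefore itself fully stable (using $(\mathcal{P}_{\gamma})_{\beta}=\mathcal{P}_{\gamma_{\beta}}$ and that stability means independence of the coarsening). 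Hence $\mathcal{P}_{(\alpha_{1},\eta)}$ is fully stable for every $\alpha_{1}$. Heredity is now automatic: given $\alpha_{0}$ of size $p$ and any $m'>p$, refine $\alpha_{0}$ to $\alpha_{1}$ of size $m'$, and then $\mathcal{Q}:=\mathcal{P}_{(\alpha_{1},\eta)}$ is $m'$-stable with $\mathcal{P}_{(\alpha_{0},\eta)}=\mathcal{Q}_{\gamma}$ for all $\gamma\in\Pi\binom{m'}{p}$, so $\mathcal{P}_{(\alpha_{0},\eta)}\in HSP_{n}(p)$. Thus $\eta$ is good, and by the reduction $M\subseteq\mathrm{Good}$. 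The only genuinely delicate point is exactly this passage from \emph{stable} (all Corollary \ref{stable-cor} delivers) to \emph{hereditary stable}: one must arrange stability cofinally and then concentrate it, via the compactness limit, into a single point stable at all scales, from which the required stable lifts at every level — and hence heredity — are supplied by refinements; the remaining ingredients ($G$-invariance, closedness, the amalgamation identities) are routine.
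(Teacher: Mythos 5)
Your overall architecture is close to the paper's: dual Ramsey (Corollary \ref{stable-cor}) plus partition homogeneity (Proposition \ref{pro:h_hom_eq_part_hom}) to realize stable induced patterns at prescribed partitions, a compactness limit inside $M$, and minimality to spread the property over $M$. Two things genuinely differ. First, your reduction via the closed, $G$-invariant set $\mathrm{Good}$ is a clean substitute for the paper's orbit-density transfer. Second, and more substantively, you run the Ramsey step at level $m$ (the paper uses $SDR(m+1;\dots)$), so along the net you only have \emph{stable} patterns, and you recover \emph{heredity} at the limit point $\eta$ by letting the induced patterns of $\eta$ at refinements serve as the stable lifts. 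The paper instead asserts that the $m$-coarsening of an $(m+1)$-stable pattern is already hereditary, an assertion that is not a formal consequence of Definition \ref{def:hereditary patterns} but rests on the classification machinery (Theorems \ref{thm:Standard_Examples_are_Stable}, \ref{thm: standard are the only stable}, \ref{thm:the only hereditary}); your route avoids the classification, which is attractive since the theorem is claimed for all $n$ while the classification is only available for $n=1$.

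There is, however, a concrete gap: you drop the permutations that the paper carries everywhere (the $\sigma$'s in the definition of hereditary stable signature and in Lemma \ref{lem:stability_permutation_conserved}), and they cannot be dropped. If $\alpha\succeq\alpha_{1}$, the grouping partition of the index set of $\alpha$ determined by the blocks of $\alpha_{1}$ need not be naturally ordered (e.g.\ $\alpha_{1}=(A_{2}\cup A_{3},A_{1})$ is refined by $\alpha=(A_{1},A_{2},A_{3})$), so your identity $\Pcal_{(\alpha_{1},\eta_{\alpha})}=(\Pcal_{(\alpha,\eta_{\alpha})})_{\gamma}$ with $\gamma\in\Pi\binom{|\alpha|}{|\alpha_{1}|}$ naturally ordered is false in general; what is true is $\Pcal_{(\sigma(\alpha_{1}),\eta_{\alpha})}=(\Pcal_{(\alpha,\eta_{\alpha})})_{\gamma}$ for some $\sigma\in S_{|\alpha_{1}|}$. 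This is not cosmetic, because stability is \emph{not} permutation-invariant: $\{\{1\}\}$ is $3$-stable while $\{\{2\}\}$ is not (see case (2) of Proposition \ref{thm:3 stable patterns}), and patterns of type $\Acal_{j,m}^{1}$, which do occur for actual minimal sets, behave the same way. So your limit step proves only: for every $\alpha_{1}$ there exists $\sigma$ with $\Pcal_{(\sigma(\alpha_{1}),\eta)}$ stable.

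That permutation then contaminates your final ``heredity is now automatic'' step: the $m'$-stable lift $\Qcal$ you produce for $\alpha_{0}$ coarsens not to $\Pcal_{(\alpha_{0},\eta)}$ but to $\Pcal_{(\tau(\alpha_{0}),\eta)}$ for some $\tau\in S_{p}$ which depends on the subnet index used at level $m'$, whereas Definition \ref{def:hereditary patterns} demands that one fixed $p$-pattern have lifts at \emph{every} level $m'>p$. The repair is short but is a genuine argument you must add: for $\sigma\in S_{p}$ let $T_{\sigma}$ be the set of levels $m'>p$ at which $\Pcal_{(\sigma(\alpha_{0}),\eta)}$ has an $m'$-stable lift. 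Each $T_{\sigma}$ is downward closed in $\{p+1,p+2,\dots\}$ (coarsen a lift at level $m''$ to any intermediate level $m'$, using $(\Qcal_{\gamma})_{\beta}=\Qcal_{\gamma_{\beta}}$ and stability of $\Qcal$), and your construction shows $\bigcup_{\sigma\in S_{p}}T_{\sigma}=\{p+1,p+2,\dots\}$; since $S_{p}$ is finite, some $T_{\sigma}$ is unbounded, hence equals all of $\{p+1,p+2,\dots\}$, and this $\sigma$ witnesses $\Pcal_{(\sigma(\alpha_{0}),\eta)}\in HSP_{n}(p)$. With the $\sigma$'s carried through (your definition of $\mathrm{Good}$ already anticipates them) and this pigeonhole inserted, your proof is complete; note that the gap sits exactly at the stable-to-hereditary passage you yourself singled out as the delicate point.
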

\begin{proof}
We start by showing there is $\xi'\in M$ so that $sig(\xi')$ is
hereditary stable. Fix $\xi\in M$. Let $\alpha=(A_{1},\dots,A_{m})\in\D$.
Let $N=SDR(m+1;r_{2},\dots,r_{m})$, with $r_{k}=2^{2^{kn}}$, $k=2,\ldots,m$.
Let $\beta \in\D$ with $|\beta|=N$
%%e
(see subsection \ref{Ramsey}). By Corollary \ref{stable-cor}
there exists a partition $\tilde{\gamma}\in\Pi\binom{N}{m+1}$ such
that the $(m+1)$-pattern $(\Pcal_{(\beta,\xi)})_{\tilde{\gamma}}=\Pcal_{(\beta_{\tilde{\gamma}},\xi)}$
is $(m+1)$-stable. Fix $\delta\in\Pi\binom{m+1}{m}$. Denote $\gamma=\tilde{\gamma}_{\delta}$.
Conclude that the $m$-pattern $(\Pcal_{(\beta,\xi)})_{\gamma}=\Pcal_{(\beta_{\gamma},\xi)}$
is a hereditary $m$-stable $m$-pattern. Denote $\beta_{\gamma}=\{{A'}_{1},\dots,{A'}_{m}\}$.
Let $g\in G$ be any element $g=g_{\alpha}$ of $G$ with $g({A'}_{j})={A}{}_{j},\ j=1,\dots,m$.
By Lemma \ref{lem:Pattern_Transformation} $\Pcal_{(\beta_{\gamma},\xi)}=\Pcal_{(\alpha,g_{\alpha}\xi)}$.
In particular $\Pcal_{(\alpha,g_{\alpha}\xi)}\in HSP_{n}(m)$.
Let $\xi'$ be a limit point of the net $\{g_{\alpha}\xi: \alpha\in\D\}$.
By definition there is a directed set $S$ and a monotone cofinal
mapping $f:S\rightarrow\D$, so that $\xi'=\lim_{s\in S}g_{f(s)}\xi.$
Fix again some $\alpha\in\D.$ By definition there exists $s\in S$
so that for all $\tilde{s}\geq s$, $g_{f(\tilde{s})}\xi\in U(\xi',\alpha)$
(which implies $g_{f(\tilde{s})}\xi\in U(\xi',\sigma(\alpha))$
for any $\sigma\in S_{m}$) and in particular $\xi'_{\alpha}=(g_{f(\tilde{s})}\xi)_{\alpha}$.
As $f$ is cofinal there is $r\in S$ so that $f(r)\succeq \alpha \wedge f(s)$.
By construction $\Pcal_{(f(r),g_{f(r)}\xi)}$ is a hereditary $|f(r)|$-stable
$|f(r)|$-pattern. As $f(r)\geq\alpha$, conclude by Lemma \ref{lem:stability_permutation_conserved}
$\pcal_{(\sigma(\alpha),g_{f(r)}\xi)}$ for some $\sigma\in S_{m}$
is a hereditary $m$- stable $m$-pattern. By Proposition \ref{pro:closeness_equivalences},
$\pcal_{(\sigma(\alpha),g_{f(r)}\xi)}=\pcal_{(\sigma(\alpha),\xi')}$.
We conclude $sig(\xi')$ is hereditary stable. Let now $\xi''\in M$
and fix ${\alpha}\in\D$. Using minimality and Proposition \ref{pro:closeness_equivalences} there
is $g\in G$ so that $\pcal_{(\alpha,\xi'')}=\pcal_{(\alpha,g\xi')}$.
By Lemma \ref{lem:Pattern_Transformation} $\pcal_{(\alpha,g\xi')}=\pcal_{(g^{-1}\alpha,\xi')}.$
As $sig(\xi')$ is hereditary stable there is $\sigma\in S_{m}$ so
that $\pcal_{(\sigma g^{-1}\alpha,\xi')}\in HSP_{n}(m)$. By
Lemma \ref{lem:Pattern_Transformation} $\pcal_{(\sigma g^{-1}\alpha,\xi')}=\sigma^{-1}\pcal_{(g^{-1}{\alpha},\xi')}=\sigma^{-1}\pcal_{(\alpha,g\xi')}=\sigma^{-1}\pcal_{(\alpha,\xi'')}=\pcal_{(\sigma\alpha,\xi'')}$
and we conclude $sig(\xi'')$ is hereditary stable.
\end{proof}

\subsection{The main theorem}
\begin{defn}
We call a sequence of $n$-dimensional patterns $\mathbb{P}=\{\Pcal_{m}\}_{m\in\mathbb{N}}$
so that $\Pcal_{m}\in HSP_{n}(m)$ a \textbf{pattern-family}. A minimal
subspace $M\subset E_{n}$ is said to be \textbf{$\mathbb{P}$-associated}
if for every $\xi\in M$, $m\in\mathbb{N}$ and $\alpha\in\D_{m}$
there exists $\sigma\in S_{m}$ (depending on $\xi$) so that $\xi_{\alpha}=\sigma P_{m}$.
One easily verfies that for a given pattern family $\mathbb{P}$ there
is at most one minimal subspace
%%e
to which it is $\mathbb{P}$-associated.
If such a subspace exists it is denoted by
%%ee
$M(\mathbb{P})$.
\end{defn}
From now on we assume $n=1$. For the following definition, recall Definition \ref{standard} and use Lemma \ref{lem: second representation of D}
to
%%e
understand articles (\ref{enu:P^Q}), (\ref{enu:P^qL}), (\ref{enu:P^qj})
and (\ref{enu:P^Lqj}).

%%e
% Maybe it will be better to group them according to the cases in Theorem 2.22.

\begin{defn}
\label{def:Standard spaces}
%%e
The pattern-families in the following list are called
%%ee
\textbf{standard}.
Each of these is associated with a minimal subspaces of $E_{1}$
and we call these minimal subspaces
%%ee
the \textbf{standard minimal spaces}.
\begin{enumerate}
\item
$M(\{\vec{m}\}_{m\in\mathbb{N}})=\{\{X\}\}$.
\item
$M(\{\phi_{m}\}_{m\in\mathbb{N}})=\Phi$.
\item
$M(\{\Acal_{j,m}\}_{m\in\mathbb{N}})=\{\{\{x_{1},x_{2},\ldots,x_{j}\}\}_{(x_{1},x_{2},\ldots,x_{j})\in X^{j}}\}$
($j\in\mathbb{N}$).
\item
$M(\{\Acal_{j,m}\cup\{\vec{m}\}\}_{m\in\mathbb{N}})=\{\{\{x_{1},x_{2},\ldots,x_{j}\},X\}_{(x_{1},x_{2},\ldots,x_{j})\in X^{j}}\}$
($j\in\mathbb{N}$).
\item
$M(\{\Acal_{j,m}\cup\phi_{m}\}_{m\in\mathbb{N}})=\{\{\{x_{1},x_{2},\ldots,x_{j}\},F\}_{(x_{1},x_{2},\ldots,x_{j})\in X^{j},F\in\xi}\}_{\xi\in\Phi}$
($j\in\mathbb{N}$).
\item
$M(\{\Acal_{j,m}^{1}\}_{m\in\mathbb{N}})=\{\{\{x_{1},x_{2},\ldots,x_{j}\}_{(x_{2},\ldots,x_{j})\in X^{j-1}}\}\}_{x_{1}\in X}$
($j\in\mathbb{N}$).
\item\label{enu:P^Q}
Let $q\geq1$. Let $\mathbb{P}^{q}=\{\Pcal_{m}^{q}\}_{m\in\mathbb{N}}$
be given by\[
\Pcal_{m}^{q}=\begin{cases}
\Acal_{m,m}^{1} & 1\leq m\leq q+2\\
\dcal_{m-q-3,m-q}^{m} & m>q+2\end{cases}\]
$M(\mathbb{P}^{q})=\{\{F\cup\{x_{1},x_{2},\ldots,x_{q}\}\}_{(x_{1},x_{2},\ldots,x_{q})\in X^{q},F\in\xi}\}{}_{\xi\in\Phi}$.
\item\label{enu:P^qL}
Let $l>q\geq1$. Let $\mathbb{P}^{q,l}=\{\Pcal_{m}^{q,l}\}_{m\in\mathbb{N}}$
be given by\[
\Pcal_{m}^{q,l}=\begin{cases}
\Acal_{m,m}^{1} & 1\leq m\leq l\\
\dcal_{m-l-1,m-q}^{m} & m>l\end{cases}\]
$M(\mathbb{P}^{q,l})=\{\{F\cup\{x_{1},x_{2},\ldots,x_{q}\},\{r(\xi),y_{2},\ldots,y_{l}\}\}_{(x_{1},x_{2},\ldots,x_{q})\in X^{q},
%%e
(y_{2},\ldots,y_{l})\in X^{l-1},F\in\xi}\}{}_{\xi\in\Phi}$.
\item \label{enu:P^qj}
Let $q,j\geq1$. Let $\mathbb{P}^{q,j}=\{\Pcal_{m}^{q,j}\}_{m\in\mathbb{N}}$
be given by\[
\Pcal_{m}^{q,j}=\begin{cases}
\Acal_{j,m}\cup\Acal_{m,m}^{1} & 1\leq m\leq q+2\\
\Acal_{j,m}\cup\dcal_{m-l-1,m-q}^{m} & m>q+2\end{cases}\]
$M(\mathbb{P}^{q,l})=\{\{F\cup\{x_{1},x_{2},\ldots,x_{q}\},\{z_{1},z_{2},\ldots,z_{j}\}\}_{(x_{1},x_{2},\ldots,x_{q})\in X^{q},(z_{1},z_{2},\ldots,z_{j})\in X^{j},F\in\xi}\}{}_{\xi\in\Phi}$.
\item \label{enu:P^Lqj}
Let $l>q\geq1$ and $1\leq j<l$. Let $\mathbb{P}^{q,l,j}=\{\Pcal_{m}^{q,l,j}\}_{m\in\mathbb{N}}$
be given by\[
\Pcal_{m}^{q,l,j}=\begin{cases}
\Acal_{j,m}\cup\Acal_{m,m}^{1} & 1\leq m\leq l\\
\Acal_{j,m}\cup\dcal_{m-l-1,m-q}^{m} & m>l\end{cases}\]
%%e
%$M(\mathbb{P}^{q,l,j})=\{\{F\cup\{x_{1},x_{2},\ldots,x_{q}\},\{r(\xi),y_{2},\ldots,l\},$
$M(\mathbb{P}^{q,l,j})=\{\{F\cup\{x_{1},x_{2},\ldots,x_{q}\},\{r(\xi),y_{2},\ldots,y_l\},$

$\{z_{1},z_{2},\ldots,z_{j}\}\}_{(x_{1},x_{2},\ldots,x_{q})\in X^{q},(y_{2},
%%e
\ldots,y_{l})\in X^{l-1},(z_{1},z_{2},\ldots,z_{j})\in X^{j},F\in\xi}\}{}_{\xi\in\Phi}$
\item
$M(\{\Acal_{j,m}^{1}\cup\{\vec{m}\}\}_{m\in\mathbb{N}})=\{\{X,\{x_{1},x_{2},\ldots,x_{j}\}_{(x_{2},\ldots,x_{j})\in X^{j-1}}\}\}_{x_{1}\in X},\,(j\in\mathbb{N}).$
\item
$M(\{\Acal_{j,m}^{1}\cup\phi_{m}\}_{m\in\mathbb{N}})=\{\{\{r(\xi),x_{2},\ldots,x_{j}\},F\}_{(x_{2},\ldots,x_{j})\in X^{j-1},F\in\xi}\}_{\xi\in\Phi}\,(j\in\mathbb{N}).$
\item
$M(\{\Acal_{j,m}^{1}\cup\Acal_{j',m}\}_{m\in\mathbb{N}})=\{\{\{y_{1},y_{2},\ldots,y_{j'}\},$

$\{x_{1},x_{2},\ldots,x_{j}\}_{(x_{2},\ldots,x_{j})\in X^{j-1},(y_{1},y_{2},\ldots,y_{j'})\in X^{j'}}\}\}_{x_{1}\in X}\,(j,j'\in\mathbb{N})$.
\item
$M(\{\Acal_{j,m}^{1}\cup\Acal_{j',m}\cup\{\vec{m}\}\}_{m\in\mathbb{N}})=\{\{X,\{y_{1},y_{2},\ldots,y_{j'}\},$

$\{x_{1},x_{2},\ldots,x_{j}\}_{(x_{2},\ldots,x_{j})\in X^{j-1},(y_{1},y_{2},\ldots,y_{j'})\in X^{j'}}\}\}_{x_{1}\in X}$.
\item \label{enu:last usl}
$M(\{\Acal_{j,m}^{1}\cup\Acal_{j',m}\cup\phi_{m}\}_{m\in\mathbb{N}})=\{\{\{y_{1},y_{2},\ldots,y_{j'}\},$

$\{r(\xi),x_{2},\ldots,x_{j}\},F\}_{(x_{2},\ldots,x_{j})\in X^{j-1},F\in\xi,(y_{1},y_{2},\ldots,y_{j'})\in X^{j'}}\}_{\xi\in\Phi}$.
\item \label{enu:case W}
$M(\{\Acal_{m,m}\}_{m\in\mathbb{N}})=\{\{F|\, F\in Exp(X)\}\}$.
\item \label{enu:case W_X}
$M(\{\Acal_{m,m}^{1}\}_{m\in\mathbb{N}})=W_{X}=\{\{F|\, x\in F\in Exp(X)\}\}_{x\in X}$.
\item \label{enu:case W_X_j}
$M(\{\Acal_{m,m}^{1}\cup\Acal_{j,m}\}_{m\in\mathbb{N}})=\{\{\{x_{1},x_{2},\ldots,x_{j}\},F\}_{(x_{1},x_{2},\ldots,x_{j})\in X^{j},x\in F\in Exp(X)}\}_{x\in X}$
$(j\in\mathbb{N}).$
\end{enumerate}
\end{defn}
\begin{thm}\label{main2}
\label{thm:all minimal subspaces E_1}The standard minimal spaces
are the only minimal subspaces of $E_{1}$. \end{thm}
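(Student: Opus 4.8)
The plan is to reduce the topological classification to a combinatorial one — the classification of coherent towers of hereditary stable patterns — and then to decode each such tower into the corresponding minimal space. First I would fix a minimal $M\subset E_{1}$ and a point $\xi\in M$. By Theorem \ref{thm:Minimal Space contains stable signature} the signature $sig(\xi)$ is hereditary stable, so for each $m$ and each $\alpha\in\D_{m}$ some permutation of $\xi_{\alpha}$ lies in $HSP_{1}(m)$; since hereditary stable patterns are permutation stable (Theorem \ref{thm:hereditary is permutation stable}) this is the \emph{unique} hereditary stable pattern in the $S_{m}$-orbit of $\xi_{\alpha}$, which I call $\Pcal_{m}$. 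Passing to a common refinement and invoking stability shows $\Pcal_{m}$ is independent of the choice of $\alpha\in\D_{m}$, and the argument in the second half of the proof of Theorem \ref{thm:Minimal Space contains stable signature} (minimality together with Lemma \ref{lem:Pattern_Transformation} and Proposition \ref{pro:closeness_equivalences}) shows it is independent of the choice of $\xi\in M$. This produces a well-defined pattern-family $\mathbb{P}=\{\Pcal_{m}\}_{m\in\N}$, and by construction $M$ is $\mathbb{P}$-associated.

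Next I would record that $\mathbb{P}$ is coherent. Whenever $\beta\succeq\alpha$ the pattern $\xi_{\alpha}$ is induced from $\xi_{\beta}$, so the amalgamation identity $(\pcal_{(\beta,\xi)})_{\gamma}=\pcal_{(\beta_{\gamma},\xi)}$ gives $(\Pcal_{m+1})_{\gamma}=\Pcal_{m}$ for every $\gamma\in\Pi\binom{m+1}{m}$. Thus $\mathbb{P}$ is a coherent tower of hereditary stable patterns, and by Lemma \ref{lem:sig(xi)=00003D00003Dxi} each point of $M$ is recovered as the limit of its signature, so once $\mathbb{P}$ is known the uniqueness of $\mathbb{P}$-associated subspaces pins down $M$. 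The problem is thereby reduced to a purely combinatorial one: classify all towers $\{\Pcal_{m}\}$ with $\Pcal_{m}\in HSP_{1}(m)$ and $(\Pcal_{m+1})_{\gamma}=\Pcal_{m}$.

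The classification I would carry out inductively on $m$, reading off each transition from the lift data already computed. At every level where $\Pcal_{m}$ has unique stable lifts — the cases of Theorem \ref{thm:usl patterns} — the next term $\Pcal_{m+1}$ is forced, so along such stretches the tower is rigid. Branching is possible only at the three exceptional patterns $\Acal_{m,m}$, $\Acal_{m,m}^{1}$ and $\Acal_{m,m}^{1}\cup\Acal_{j,m}$, whose hereditary lifts are enumerated in Lemma \ref{lem:non usl cases}; at such a level the tower may either remain at the corresponding top pattern or drop to one of the finitely many lower hereditary lifts listed there — typically a $\dcal$-type pattern or an $\Acal_{j,m+1}^{1}$. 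Interpreting Lemma \ref{lem:non usl cases} as a transition rule, a coherent tower is precisely a choice of whether, when, and how such a drop occurs; the level of the drop supplies the integer parameter $q$ (together with $l>q$ and/or the auxiliary index $j$) appearing in the families $\mathbb{P}^{q},\mathbb{P}^{q,l},\mathbb{P}^{q,j},\mathbb{P}^{q,l,j}$ of Definition \ref{def:Standard spaces}, the index shifts being governed by $(\dcal_{r,s}^{m})_{\gamma}=\dcal_{r-1,s-1}^{m-1}$. Matching each resulting tower against Definition \ref{def:Standard spaces} — using Lemma \ref{lem: second representation of D} to recognize the $\dcal$-patterns in their ``initial segment plus extra points'' form — shows the coherent towers are exactly the standard pattern-families.

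Finally, with $\mathbb{P}$ recognized as one of the standard pattern-families, the uniqueness clause in the definition of $M(\mathbb{P})$ forces $M=M(\mathbb{P})$; decoding each $\Pcal_{m}$ through $\xi=\lim_{\alpha}\xi_{\alpha}$ (Lemma \ref{lem:sig(xi)=00003D00003Dxi}) then yields the explicit set-theoretic description recorded in Definition \ref{def:Standard spaces} (for instance $\{\phi_{m}\}$ decodes to the maximal-chain space $\Phi$, exactly as in the motivating example). I expect the heart of the difficulty to lie in the classification step: one must check that the local branching prescribed by Lemma \ref{lem:non usl cases} can be assembled consistently along an entire infinite tower, that the parameter ranges come out exactly as in Definition \ref{def:Standard spaces} after accounting for the index shifts, and — most importantly — that the enumeration is exhaustive, so that every coherent tower, and hence every minimal subspace of $E_{1}$, already appears on the standard list.
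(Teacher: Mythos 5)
Your proposal is correct and takes essentially the same route as the paper's own proof: both rest on Theorem \ref{thm:Minimal Space contains stable signature}, permutation stability (Theorem \ref{thm:hereditary is permutation stable}), the unique-stable-lift theorem to force the pattern-family above usl levels, and Lemma \ref{lem:non usl cases} to control the branching at the three exceptional top patterns, before matching the result against Definition \ref{def:Standard spaces}. The only difference is organizational --- you build the full coherent tower and its well-definedness first and then classify towers abstractly, whereas the paper propagates from a fixed base partition and identifies $M$ with a standard space via the intersection formula of Lemma \ref{lem:sig(xi)=00003D00003Dxi} and minimality --- but the combinatorial engine and the case analysis are the same.
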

\begin{proof}
Let $M\subset E_{1}$ be a minimal subspace and $\xi\in M$. By Theorem
\ref{thm:Minimal Space contains stable signature} $sig(\xi)$ is
hereditary stable. For a subspace $N\subset E_{1}$ and $\alpha\in\D$
denote $U(N,\alpha)=\bigcup_{\xi\in N}U(\xi,\alpha)$.
Fix $\alpha\in\dcal_{m}$ with $m\geq3$. By Lemma \ref{lem:sig(xi)=00003D00003Dxi},
%%e
% $N=\bigcap_{\beta\in S}U(N,\alpha)$
$N=\bigcap_{\beta\in S}U(N,\beta)$
for any cofinal set
$S\subset\D$, therefore it is enough to to show for any $\beta\succeq{\alpha}$
that $\xi_{\beta}\in U(N,\beta)$ for $N$ a standard
space. %
\begin{comment}
Notice that for all $\ha\in\dcal$ and $\sigma\in S_{|\ha|}$, $\sigma(\pcal_{(\ha,\xi)})=\pcal_{(\sigma(\ha),\xi)}$.
This implies one can assume w.l.o.g. $\pcal_{(\ha,\xi)}$ is hereditary
stable.
\end{comment}
%%e
By the definition of hereditary stability
%{}By assumption
there exists $\sigma_{1}\in S_{m}$ so that $\pcal_{(\xi,\sigma_{1}({\alpha}))}\triangleq\pcal$
is hereditary stable. Assume first $\pcal$ has usl
%%e
(see Definition \ref{usl}). Let $\mathbb{P}=\{\pcal_{k}\}_{k\in\mathbb{N}}$
be the unique pattern-family so that
%%e ???
%%y Changed standard space --> pattern-family
% $\pcal_{3}=\pcal$
$\pcal_{m}=\pcal$
(it corresponds
to one of the first \ref{enu:last usl} cases in Definition \ref{def:Standard spaces}).
Let $\beta\succeq{\alpha}$ with $\beta\in\D_{m'}$.
%%e
% By assumption
Again there exists $\sigma_{2}\in S_{m'}$ so that $\pcal_{(\xi,\sigma_{2}({\beta}))}$
is hereditary stable. Let $\gamma\in\Pi{m' \choose m}$ and $\sigma_{3}\in S_{m}$
so that $(\sigma_{2}(\beta))_{\gamma}=\sigma_{3}\sigma_{1}(\alpha)$.
Conclude by Lemma \ref{lem:Pattern_Transformation}
%%e
that  $(\pcal_{(\xi,\sigma_{2}({\beta}))})_{\gamma}=\pcal_{(\xi,\sigma_{3}\sigma_{1}(\alpha))}=\sigma_{3}^{-1}(\pcal_{m})$.
As $\pcal_{m}$ is permutation stable by Theorem \ref{thm:hereditary is permutation stable},
conclude $(\pcal_{(\xi,\sigma_{2}(\beta))})_{\gamma}=\pcal_{m}$.
As $\pcal_{m}$ has usl one has that $\pcal_{(\xi,\sigma_{2}(\beta))}=\pcal_{m'}$,
i.e. $\pcal_{(\xi,\beta)}=\sigma_{2}^{-1}(\pcal_{m'})$, which
implies $\xi_{\beta}\in U(M(\mathbb{P}),\beta)$.

We now
treat the case when $\pcal$ does not have usl. Considering Theorem
\ref{thm:the only hereditary} and the proof of Theorem \ref{thm:Standard_Examples_are_Stable}
it is clear that $\pcal=\Acal_{m,m}^{1}$ or $\pcal=\acal_{m,m}$
or $\pcal=\Acal_{m,m}^{1}\cup\acal_{j,m}$ for $1\leq j\leq m-2$.
As in the case that $\pcal$ has usl, we have $(\pcal_{(\xi,\sigma_{2}(\beta))})_{\gamma}=\pcal_{m}$
for all $\gamma\in\Pi{m' \choose m}$\@. Let us consider the case
$m'=m+1$. Assume w.lo.g. $\pcal=\acal_{m,m}$ and denote $\mathbb{P}=\{\Acal_{m,m}\}_{m\in\mathbb{N}}$.
By Lemma \ref{lem:non usl cases}, $p_{\pi}^{-1}(\acal_{m,m})\cap HSP_{1}(m+1)=\{\Acal_{m,m+1},\Acal_{m+1,m+1},\Acal_{m+1,m+1}\cup\{\vec{m+1}\},\Acal_{m,m+1}^{1},\Acal_{m-1,m+1}\}\cup\{\dcal_{1,l}^{m+1}\cup\acal_{m-1,m+1}\}_{l\in\{2,\ldots,m+1\}}.$
Note that except for $\acal_{m+1,m+1}$ all members in the list have usl.
This implies $\pcal_{(\xi,\sigma_{2}(\beta))}$ has usl or $\pcal_{(\xi,\sigma_{2}(\beta))}=\acal_{m+1,m+1}$.
This means that either we have reduced to the usl case or $\xi_{\beta}\in U(M(\mathbb{P}),\beta)$.
Using induction we see that only three more types of minimal subspaces
are possible, namely the three last cases of the list in the statement
of the theorem.\end{proof}
\begin{thm}
The only minimal spaces of $E_{1}$ up to isomorphism are $\{\ast\}$,
$X$ and $\Phi$.\end{thm}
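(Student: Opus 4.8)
The plan is to read off the conclusion from Theorem~\ref{main2}, which asserts that the minimal subspaces of $E_1$ are \emph{exactly} the standard minimal spaces of Definition~\ref{def:Standard spaces}, and then to determine the $G$-isomorphism type of each of the eighteen entries on that list by sorting them according to their parametrization. The first group consists of the spaces built from the symmetric (i.e. $S_m$-invariant) pattern-families $\{\vec m\}$, $\Acal_{j,m}$, $\Acal_{j,m}\cup\{\vec m\}$ and $\Acal_{m,m}$, that is items (1),(3),(4),(16). Each of these is a single $G$-fixed point of $E_1$: for instance $\{X\}$ in (1), the collection $\{F\in Exp(X):|F|\le j\}$ in (3), and $Exp(X)$ itself in (16); since homeomorphisms preserve cardinality these points are $G$-invariant, so each such space is isomorphic to $\{\ast\}$.

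Next I would treat the spaces carrying a single distinguished point $x\in X$, which arise from the families containing an $\Acal^1$-block, namely items (6),(11),(13),(14),(17),(18). For each such $M$ the assignment $x\mapsto\xi_x$, where $\xi_x$ is the corresponding element of $E_1$ (for example $\xi_x=\{F\in Exp(X):x\in F\}$ in (17)), is by construction a continuous $G$-equivariant surjection $X\to M$. It is injective because $x$ is recoverable from $\xi_x$ as the intersection of a canonically described subfamily: the family of all infinite members in (17) and (18), the family of the cardinality-$j$ members in (13) and (14), and so on; for the infinite, perfect, zero-dimensional space $X$ this intersection equals $\{x\}$. A continuous equivariant bijection from the compact space $X$ onto the Hausdorff space $M$ is a homeomorphism, so $M\cong X$.

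The third group consists of the spaces parametrized by a maximal chain $\xi\in\Phi$, i.e. items (2),(5),(7),(8),(9),(10),(12),(15). These are handled in the same spirit: the map $\Phi\to M$ sending $\xi$ to the corresponding collection — built from $\xi$ by taking the root $r(\xi)$, forming unions with finite sets, and adjoining finite-set families — is a continuous $G$-map onto $M$, and it suffices to prove injectivity. Here one must reconstruct $\xi$ from a collection that also carries the finite-set and thickened parts: one first recovers $r(\xi)$ as an intersection as above, and then recovers $\xi$ itself as the ``core chain'' obtained by intersecting, level by level, the members of the collection that dominate a given element. I expect the \emph{main obstacle} to be exactly this reconstruction for the thickened families $\mathbb{P}^{q}$, $\mathbb{P}^{q,l}$, $\mathbb{P}^{q,j}$, $\mathbb{P}^{q,l,j}$ of (7)--(10), where the chain is blurred by unions with up to $q$ extra points; here I would lean on the description of the $\dcal^m_{r,s}$-patterns afforded by Lemma~\ref{lem: second representation of D}. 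Once injectivity is established, compactness upgrades the bijection to an isomorphism $M\cong\Phi$.

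Finally I would check that $\{\ast\}$, $X$ and $\Phi$ are pairwise non-isomorphic, so that these three classes are genuinely distinct and all occur (via (1), (17) and (2) respectively). The one-point system is clearly distinct from the other two, so it remains to see $X\not\cong\Phi$. I would show that $(X,G)$ has no non-trivial factor, equivalently that the only closed $G$-invariant equivalence relations on $X$ are the diagonal and $X\times X$: if $R$ is such a relation containing some $(x,x')$ with $x\neq x'$, then for any $z\neq z'$ I pick disjoint clopen sets $B_1\ni z$ and $B_2\ni z'$ and, applying partition-homogeneity (Proposition~\ref{pro:h_hom_eq_part_hom}) to the three-block clopen partitions $(A_1,A_2,A_3)$ and $(B_1,B_2,B_3)$ with $x\in A_1$, $x'\in A_2$, obtain $g\in G$ with $(gx,gx')\in R$, $gx\in B_1$ and $gx'\in B_2$; letting $B_1\downarrow\{z\}$ and $B_2\downarrow\{z'\}$ along clopen neighbourhood bases and using that $R$ is closed yields $(z,z')\in R$, whence $R=X\times X$. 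Thus every factor of $X$ is trivial or $X$ itself. By contrast the root map $\pi\colon\Phi\to X$ is a $G$-factor map with $|\pi^{-1}(x)|\geq 2$ for every $x$, since the infinite space $X$ admits many maximal chains with a prescribed root; hence $X$ is a non-trivial proper factor of $\Phi$ and therefore $\Phi\not\cong X$. This establishes that $\{\ast\}$, $X$ and $\Phi$ are precisely the isomorphism classes of minimal subspaces of $E_1$.
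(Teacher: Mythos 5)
Your reading of the paper's one-line proof is the intended one: sort the eighteen spaces of Theorem \ref{main2} into $G$-fixed points, $X$-parametrized spaces, and $\Phi$-parametrized spaces, and identify each with $\{\ast\}$, $X$ or $\Phi$. Your first two groups are handled correctly, and your closing argument that $\{\ast\}$, $X$, $\Phi$ are pairwise non-isomorphic is correct and pleasantly self-contained. The proof breaks down, however, at the injectivity claim for the third group. Consider item (5) with $j=1$, whose points are $\mathcal{A}\cup\xi$ with $\mathcal{A}=\{\{x\}:x\in X\}$. Choose $a\neq b$ in $X$, let $\mathcal{C}$ be a chain in $Exp(X)$ containing $\{a,b\}$ as a member and maximal among chains all of whose members contain $\{a,b\}$, and put $\xi=\{\{a\},\{a,b\}\}\cup\mathcal{C}$ and $\xi'=\{\{b\},\{a,b\}\}\cup\mathcal{C}$. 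One checks directly that both are maximal chains in $Exp(X)$ (any closed set comparable with all members of $\xi$ already lies in $\xi$, by maximality of $\mathcal{C}$ above $\{a,b\}$); they are distinct, yet $\mathcal{A}\cup\xi=\mathcal{A}\cup\xi'$, because a chain's singleton members are absorbed by $\mathcal{A}$. So the parametrization $\Phi\to M$ is not injective, and no ``level-by-level intersection'' can recover $\xi$: the information is destroyed, not hidden. The same absorption phenomenon kills items (12) and (15) for $j\geq 2$ (all chain members of cardinality $\leq j$ contain the root, hence are swallowed by the $\{r(\xi),x_{2},\ldots,x_{j}\}$-family), and also items (8)--(10) in suitable parameter ranges (e.g.\ $l\geq q+2$). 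Ironically, the items you singled out as the main obstacle are the safer ones: in item (7) the families $\{F\cup\{y\}:y\in X\}$ do allow one to detect which sets are chain members, and injectivity holds there.

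Moreover, the gap cannot be repaired within your strategy. For the items above, $M$ is the image of $\Phi$ under a non-injective factor map, hence a proper factor of $\Phi$. By \cite{GG12}, $\Phi$ is the universal minimal space of $G$, and the universal minimal space is coalescent (Ellis): every $G$-endomorphism of it is an automorphism. If a $G$-isomorphism $\psi:M\to\Phi$ existed, then $\psi$ composed with the factor map $\Phi \to M$ would be a non-injective endomorphism of $\Phi$, which is impossible. Nor can such an $M$ be isomorphic to $X$: for $X=C$ the action of $Homeo(C)$ on $C$ is transitive, whereas item (5) has at least two orbits, distinguished by whether the collection contains a two-point set. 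So for these entries the trichotomy cannot be established by exhibiting the isomorphisms you describe; in fact, granting \cite{GG12} and coalescence, it fails, and the statement itself --- whose entire proof in the paper is the phrase ``follows easily'' --- appears to need re-examination precisely at these entries of the list.
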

\begin{proof}
The result follows easily from Theorem \ref{thm:all minimal subspaces E_1}.
\end{proof}

\section{An Open Question}

In view of Theorem \ref{thm:Minimal Space contains stable signature}
the following problem is interesting:
\begin{problem}
Classify all hereditary $m$-stable $m_{n}$ patterns for $n\geq2$
and $m\in\mathbb{N}$.
\end{problem}
\bibliographystyle{alpha}
\bibliography{C:/Users/yonatan/Documents/Math/Bib/universal_bib}

\address{Eli Glasner, School of Mathematical Sciences, Tel Aviv University,
Ramat Aviv, Tel Aviv 69978, Israel.}

\email{glasner@math.tau.ac.il}

\address{Yonatan Gutman, Laboratoire d'Analyse et de Mathématiques Appliquées,
Université de Marne-la-Vallée, 5 Boulevard Descartes, Cité Descartes
- Champs-sur-Marne, 77454 Marne-la-Vallée cedex 2, France.}

\email{yonatan.gutman@univ-mlv.fr}
\end{document}